\definecolor{mygreen}{RGB}{0,160,50}
\newtheorem{theorem}{Theorem}[section]
\newtheorem{lemma}[theorem]{Lemma}
\newtheorem{proposition}[theorem]{Proposition}
\newtheorem{e-definition}[theorem]{Definition\rm}
\newtheorem{remark}[theorem]{Remark}
\newcommand{\charI}{1 \hspace*{-1mm} {\rm l}}
\def\ignore#1{}
\def\eq{\begin{equation}}
\def\en{\end{equation}}
\def\eqa{\begin{eqnarray}}
\def\ena{\end{eqnarray}}
\def\eqs{\begin{eqnarray*}}
\def\ens{\end{eqnarray*}}
\def\bP{{\mathbb P}}
\def\bE{{\mathbb E}}
\def\re{{\mathbb R}}
\def\bPstar{\bP^U_{(\bs_{l-1},s_*),X}}
\def\non{\nonumber}
\def\rstar{\rhr^U_{(\bs_{l-1},s_*),X}}
\def\hrstar{\hrhr^U_{(\bs_{l-1},s_*),j,X}}
\def\hrostar{\hrhr^U_{(\bs_{l-1},s_*),j,X_0}}
\def\hrhr{{\hat\rhr}}
\def\tg{{\tilde g}}
\def\s{\sigma}
\def\l{\lambda}
\def\d{\delta}
\def\Ref#1{(\ref{#1})}
\def\Eq{\ =\ }
\def\Le{\ \le\ }
\def\Z{{\mathbb{Z}}}
\def\h{\eta}
\def\t{\tau}
\def\n{\nu}
\def\e{\varepsilon}
\def\f{\varphi}
\def\a{\alpha}
\def\p{\pi}
\def\nin{\noindent}
\def\law{{\mathcal L}}
\def\JJ{{\mathcal J}}
\def\AA{{\mathcal A}}
\def\half{{\tfrac12}}
\def\quarter{{\tfrac14}}
\def\pr{{\mathbb P}}
\def\ex{{\mathbb E}}
\def\Def{{\ \,:=\ \,}}
\def\hg{{\hat g}}
\def\D{\Delta}
\def\uii{^{(i)}}
\def\uij{^{(i,j)}}
\def\Blb{\left\{}
\def\Brb{\right\}}
\def\giv{\,|\,}
\def\uj{^{(j)}}
\def\hph{{\hat\varphi}}
\def\dtv{d_{{\rm TV}}}
\def\m{\mu}
\def\L{\Lambda}
\def\S{\Sigma}
\def\Blm{\left|}
\def\Brm{\right|}
\def\Po{{\rm Po\,}}
\def\tf{\tilde\f}
\def\tg{\tilde g}
\def\tff{\tilde f}
\def\slo{\sum_{l\ge0}}
\def\sldo{\sum_{l'\ge0}}
\def\bs{\mathbf{s}}
\def\ud{^{\d}}
\def\sJJ{\sum_{J\in\JJ}}
\def\tB{{\widetilde B}}
\def\bone{\charI}
\def\BHJ{Barbour, Holst \& Janson}
\def\us{^{(s)}}
\def\usi{^{(s-1)}}
\def\ej{e^{(j)}}
\def\eii{e^{(i)}}
\def\sjd{\sum_{j=1}^d}
\def\sid{\sum_{i=1}^d}
\def\Blb{\left\{}
\def\Brb{\right\}}
\def\Bl{\left(}
\def\Br{\right)}
\def\tr{{\rm Tr}\,}
\def\th{\theta}
\def\sixth{\tfrac16}
\def\nud{_n^\d}
\def\nuh{_n^\h}
\def\nudd{_n^{\d'}}
\def\nudci{_{n,1}^{\d}}
\def\nudct{_{n,2}^{\d}}
\def\XX{\mathcal X}
\def\dnud{\XX\nud}
\def\leqn{\lefteqn}
\def\tm{{\tilde m}}
\def\g{\gamma}
\def\JmaxS{J^\S_{{\rm max}}}
\def\Jmax{J_{{\rm max}}}
\def\Lbar{\overline{\L}}
\def\lmax{\l_{{\rm max}}}
\def\lmin{\l_{{\rm min}}}
\def\ttt{{\tilde \tau}}
\def\tAA{{\widetilde\AA}}
\def\uxit{_{X_1,X_2}}
\def\grrr{\beta}
\def\rhr{R}
\def\trhr{{\widetilde \rhr}}
\def\Estar{\bE_*^U}
\def\Pstar{\bP_*^U}
\def\KK{{\cal K}}
\def\k{\kappa}
\def\hht{\hat\tau}
\def\ut{^{(2)}}
\def\ui{^{(1)}}
\def\gbar{{\bar\g}}
\def\gbars{{\bar\g}(\s^2)}
\def\rmax{r_{\rm max}}
\def\nti{n\to\infty}
\def\MPP{Markov jump process}
\def\eur{e^{(r)}}
\def\ptmass{\varepsilon_1}
\def\lla{\alpha}
\def\Rh{\rho}
\def\FF{{\cal F}}
\def\ccc{\g}
\def\nS#1{|#1|_{\S}}
\def\nl#1{|#1|_1}
\def\ccc{\xi}
\def\td{{\tilde \d}}
\def\Sp{{\rm Sp}}
\def\lbar{{\bar\l}}
\def\cov{{\rm Cov\,}}
\def\corr{{\rm Corr\,}}
\def\smh{\psi}
\def\AAA{{\AA}}
\def\ABA{{\widetilde\AA}}
\def\hPi{{\widehat\Pi}}
\def\tPi{{\widetilde\Pi}}
\def\hX{{\widehat X}}
\def\tX{{\widetilde X}}
\def\hc{{\hat c}}
\def\KKA{\KK}
\def\adbg#1{#1}
\def\adbn#1{#1}
\def\adbr{}
\def\adbb{}
\begin{document}

\title{\hbox{Multivariate approximation in total variation, I:} equilibrium distributions of Markov jump processes}

\author{
\renewcommand{\thefootnote}{\arabic{footnote}}
A. D. Barbour\footnotemark[1],
\ M. J. Luczak\footnotemark[2]
\ 
 \& A. Xia\footnotemark[3]
\\
Universit\"at Z\"urich, Queen Mary University of London \\ \& University of Melbourne
}

\footnotetext[1]{Institut f\"ur Mathematik, Universit\"at Z\"urich, Winterthurertrasse 190, CH-8057 Z\"urich;
e-mail {\tt a.d.barbour@math.uzh.ch}.  Work begun while ADB was Saw Swee Hock 
Professor of Statistics at the National
University of Singapore, carried out in part at the University of Melbourne and at Monash University, and
supported in part by Australian Research Council Grants Nos DP120102728, DP120102398, DP150101459 
and DP150103588.}
\footnotetext[2]{School of Mathematical Sciences, Queen Mary University of London, Mile End Road, London E1 4NS, UK; 
e-mail {\tt m.luczak@qmul.ac.uk}.
Work carried out in part at the University of Melbourne, and supported by an EPSRC Leadership Fellowship, 
grant reference EP/J004022/2, 
and in part by Australian Research Council Grants Nos DP120102398 and DP150101459.}
\footnotetext[3]{School of Mathematics and Statistics, University of Melbourne, Parkville, VIC 3010, Australia;
e-mail {\tt a.xia@ms.unimelb.edu.au}.
Work supported in part by Australian Research Council Grants Nos DP120102398 and DP150101459.}
\maketitle

\begin{abstract}
For integer valued random variables, the translated Poisson distributions
form a flexible family for approximation in total variation, in much the same
way that the normal family is used for approximation \adbb{in} Kolmogorov
distance.  Using the Stein--Chen method, approximation can often be achieved
with error bounds of the same order as \adbb{those} for the CLT. In this paper, an analogous 
theory, again based on Stein's method, is developed in the multivariate context.
The approximating family consists of the equilibrium distributions of a collection
of Markov jump processes, whose analogues in one dimension are the 
immigration--death processes with Poisson distributions as equilibria.
The method is illustrated by providing total variation error bounds for the
approximation of the equilibrium distribution of one Markov jump process
by that of another.  
In a companion paper, it is shown how to use the method
for discrete normal approximation in~$\Z^d$.
\end{abstract}
 
 \noindent
{\it Keywords:} Markov jump process; multivariate approximation; total variation distance; 
infinitesimal generator; Stein's method \\
{\it AMS subject classification:} Primary 62E17; Secondary 62E20, 60J27, 60C05  \\
{\it Running head:}  Multivariate approximation

\section{Introduction}\label{Introduction}
\setcounter{equation}{0}

The Stein--Chen method (Chen, 1975) enables the distribution of a sum~$W$ of indicator 
random variables to be approximated by a Poisson distribution in a wide variety of
circumstances.  In addition, it provides an estimate of the accuracy of the
approximation, expressed in terms of the {\it total variation distance\/}.  
Such an approximation is very valuable, since it allows the approximation of
the probability $\pr[W\in A]$ of an arbitrary subset~$A$ of~$\Z_+$ by a Poisson probability,
and not just of sets~$A$ with `nice' properties.  By contrast, the 
distance classically used for quantifying normal approximation is
the Kolmogorov distance, as in the Berry--Esseen theorem, and this
measures the largest difference between the probabilities of half lines.
Of course, this can easily be extended to (the unions of small numbers of) intervals,
but gives no information at all, for instance, about the probability that~$W$ is even.

The Poisson family of distributions is, however, too restrictive to be used as widely as
the normal distribution for approximation, because mean and variance have to be equal.  
Starting from the seminal paper of Presman~(1983), more
general approximations in total variation have been derived, using more
flexible families. In particular, for the translated Poisson family,
the Stein--Chen method can be adapted in a natural way (R\"ollin 2005, 2007), allowing for 
the possibility of treating sums of dependent indicator random variables.  What is more,
the order of the error in total variation approximation obtained in this way, using the translated 
Poisson family (Barbour \& Xia, 1999) or the discretized normal family (Fang, 2014),
need be no worse than that of the error in the normal approximation, measured using 
Kolmogorov distance. This represents a substantial gain in the scope of the approximation,
at relatively small cost.

In this paper, we aim for analogous results in higher dimensions, an undertaking of considerably
greater difficulty.  The first step is to choose a suitable family
of reference distributions.  For the Poisson distribution~$\Po(\l)$,
there is a Markov jump process, the immigration--death
process with constant immigration rate~$\l$ and unit {\it per capita\/}
death rate, whose equilibrium distribution is exactly~$\Po(\l)$,
and whose generator can be used as the corresponding Stein operator (Barbour, 1988).
Proceeding by analogy, we consider the equilibrium distributions
of more general Markov jump processes as possible reference distributions.
As in the Poisson case, 
their generators automatically yield corresponding Stein equations (\BHJ, Section~10.1). 
In addition, they come with a probabilistic representation of the solutions to the Stein equation that makes
it possible to estimate the quantities needed in exploiting the method.  
Although there is often no readily available exact 
representation of the equilibrium distributions of Markov jump processes,
they are shown in Theorem~2.3 of Part~II, under a weak irreducibility condition,
to be close in total variation to discrete multivariate normal 
distributions, provided that their spread is large.  In practice, this allows
the discrete normal family to be used instead for approximation, without any material loss
of accuracy. 

We begin with a sequence $(X_n,\,n\ge1)$ of density dependent Markov jump processes on~$\Z^d$, 
where~$X_n$ has transition rates
\eq\label{ADB-transition-rates}
   X \to X + J \quad \mbox{at rate}\quad ng^J(n^{-1}X),\qquad X\in\Z^d,\ J\in\JJ,
\en
$\JJ$ is a finite subset of~$\Z^d$, and the functions~$g^J$ are twice
continuously differentiable on~$\re^d$.  For Poisson approximation in one
dimension, we take $\JJ := \{-1,1\}$ with $g^{-1}(x) = x$ and $g^1(x) = \m$ for $x\in\re$,
giving a family of immigration--death processes~$X_n$ with equilibrium distributions~$\Po(n\m)$;
$n$ plays the part of the number of summands in the CLT.
In higher dimensions, the family is chosen to allow greater flexibility.
We initially suppose only that the equations
\eq\label{ADB-deterministic-eqns}
   \frac{d\xi}{dt} \Eq F(\xi) \Def \sJJ J g^J(\xi)
\en
have an equilibrium point~$c$, so that $F(c)=0$; that the matrix
\eq\label{ADB-A-def}
    A \Def DF(c)
\en
has eigenvalues whose real parts are all negative, making~$c$ a strongly stable
equilibrium of~\Ref{ADB-deterministic-eqns}; and that the symmetric matrix
\eq\label{ADB-sigma2-def}
   \s^2 \Def \s^2(c),\quad \mbox{where}\quad \s^2(x) \Def \sJJ JJ^T g^J(x),
\en
is positive definite.

The process~$X_n$ has generator given by
\eq\label{ADB-MPP-generator}
  (\AAA_n h)(X) \Def \sJJ ng^J(n^{-1}X) (h(X+J) - h(X))
\en
for bounded $h\colon \Z^d \to \re$.
To approximate the distribution of a random vector~$W \in \Z^d$
in total variation by the equilibrium distribution~$\Pi_n$ of~$X_n$, should it exist,
a key step in using Stein's method is to show that 
the expectation $\ex \{\AAA_n h(W)\}$  is small for a large class of bounded functions~$h$.  In our
theorems, we use the functions~$h = h_f$ that are determined by solving the Stein equation
\eq\label{ADB-MPP-Stein-eqn}
     (\AAA_n h)(X) \Eq f(X)
\en
for~$h$, given any bounded $f\colon \Z^d\to\re$.
However, for ease of use, we replace the operator~$\AAA_n$ as Stein operator by the simpler 
operator
\eq\label{ADB-approx-gen}
  \ABA_n h(w) \Def  \frac n2 \tr(\s^2\D^2h(w)) + \D h^T(w)A(w-nc), \quad w\in\Z^d,
\en 
where $c \in \re^d$, $A$ and~$\s^2$ are as in \Ref{ADB-A-def} and~\Ref{ADB-sigma2-def},
respectively; \adbb{here,
\eq\label{ADB-h-diffs-def}
   \D_j h(w) \Def h(w+\ej) - h(w);\quad \D^2_{jk} h(w) \Def \D_j(\D_k h)(w),
\en
for $1 \le j,k \le d$, where~$\ej$ denotes the $j$-th coordinate vector.}
It is shown in Theorem~\ref{ADB-generator-match} that $\ABA_n$ is
close enough to the original operator~$\AAA_n$ for our purposes.

We also define~$\S$ to be the positive definite symmetric solution of the continuous 
Lyapounov equation 
\eq\label{ADB-Sigma-eqn}
    A\S + \S A^T + \s^2 \Eq 0;
\en
see, for example, Khalil~(2002, Theorem 4.6, p.136). Now~$n\S$ turns out to be asymptotically equivalent
to the covariance matrix of our approximating distribution.  For a given random vector~$W$ whose
distribution we wish to approximate, it is thus clearly a good idea to choose $n$, $A$ and~$\s^2$ in
such a way that, solving~\Ref{ADB-Sigma-eqn},  $n\S \approx \cov W$.  There are typically
many choices of $A$ and~$\s^2$ that yield the same~$\S$ as solution of~\Ref{ADB-Sigma-eqn}, and
which one is best to use in~\Ref{ADB-approx-gen} is usually dictated
by the specific context.  Having chosen $A$ and~$\s^2$, it is shown in Theorem~\ref{ADB-MPP-A-too} that there
indeed exists a \MPP~$X_n$ as in~\Ref{ADB-transition-rates} that yields the corresponding
matrices in \Ref{ADB-A-def} and~\Ref{ADB-sigma2-def}.

Even under the condition that all the eigenvalues of~$A$ in~\Ref{ADB-A-def} have negative real parts,
the process~$X_n$ may not have an equilibrium.  However, it is shown in Barbour \& Pollett~(2012,
Section~4) that it has a quasi-equilibrium close to~$nc$, and that this is asymptotically extremely 
close to the equilibrium distribution~$\Pi\nud$ of its restriction to a $n\d$-ball around~$nc$,
whatever the value of $\d > 0$. For technical reasons, we use balls
in~$\re^d$ derived from the norm $\nS{\cdot}$ defined by
\eq\label{ADB-sigma-norm}
   \nS{Y}^2 \Def Y^T \S^{-1} Y,
\en
where~$\S$ is as defined above; we let $B_{\d,\S}(c) := \{\xi\in \re^d \colon \nS{\xi-c} \le \d\} $.
Defining 
\eq\label{Aug-cal-X-def}
   \dnud(J) \Def \bigl\{X \in \Z^d\colon\, \{X,X+J\} \subset B_{n\d,\S}(nc) \bigr\},
\en
we replace~$X_n$ with the process~$X\nud$ having transition rates
\eqa
X \to X + J \ \ \mbox{at rate}\ \
ng^J_\d(n^{-1}X) \Def \left\{ \begin{array}{ll}
               ng^J(n^{-1}X), &\ \mbox{if}\ X \in \dnud(J); \\
               0,    &\ \mbox{otherwise}, 
                 \end{array}\right.\label{ADB-transition-rates-delta}
\ena
for $X\in\Z^d$ and $J\in\JJ$, with~$\d$ to be chosen suitably small and positive;
broadly speaking, we choose~$\d$ so that~$c$ is a strongly attractive equilibrium of the 
equations~\Ref{ADB-deterministic-eqns} throughout $B_{\d,\S}(c)$.  
Then, if 
\eq\label{Aug-tilde-B-def}
         X\nud(0) \in \tB_{n,\d}(c) \Def \Z^d \cap B_{n\d,\S}(nc),
\en
it follows that~$X\nud$
is a Markov process on the finite state space~$\tB_{n,\d}(c)$, and so has an equilibrium distribution;
furthermore, if all states in~$\tB_{n,\d}(c)$ communicate, this equilibrium 
distribution~$\Pi\nud$ is unique. 
Assumptions G3 and~G4 below guarantee that this is the case: see Lemma~\ref{Dec-irreducible}.

Now, if $X\nud \sim \Pi\nud$, it follows by Dynkin's formula and because each set~$\dnud(J)$ is
bounded that $\ex\{\AA\nud h(X\nud)\} = 0$ for all functions $h\colon \Z^d \to \re$, where
\eq\label{ADB-generator-eqn}
    \AA\nud h(X) \Def  n\sJJ g^J_\d(n^{-1}X) \{h(X+J) - h(X) \},\qquad X \in \Z^d .
\en
The essence of Stein's method for total variation approximation is to find a 
function~$h_B = h_{B,n}\ud$ that solves the equation
\eq\label{ADB-Stein-eqn}
   \AA\nud h_B(X) \Eq \bone_B(X) - \Pi\nud\{B\},\quad X \in  \tB_{n,\d}(c),
\en
for each $B \subset \tB_{n,\d}(c)$.  Then, if~$W$ is any random element
of~$\Z^d$ and $B \subset \tB_{n,\d}(c)$, it follows that
\eqs
	     \pr[W \in B] - \Pi\nud\{B\} 
        &=& \ex\{(\bone_B(W) - \Pi\nud\{B\})I[W \in \tB_{n,\d'}(c)]\}\non\\
        &&\qquad\mbox{}   - \Pi\nud\{B\}\pr[W \notin \tB_{n,\d'}(c)], 
\ens
for any $\d' \le \d$, so that 
\eq\label{ADB-Stein-este}
  \dtv(\law(W),\Pi\nud) \Le \sup_{B \subset \tB_{n,\d}(c)}  
            |\ex\{\AA\nud h_B(W) I[W \in \tB_{n,\d'}(c)]\}| + \pr[W \notin \tB_{n,\d'}(c)].
\en
Showing that $\law(W)$ is close to~$\Pi\nud$ in total variation thus reduces to showing that
the right hand side of~\Ref{ADB-Stein-este} is small.
Bounding the probability $\pr[W \notin \tB_{n,\d'}(c)]$ typically involves direct
estimates, such as Chebyshev's inequality.
Thus the main effort goes into bounding $|\ex\{\AA\nud h_B(W)\}|$.  

In order to extract the essential parts of $\ex\{\AA\nud h_B(W)\}$, we expand the expression for
$\AA\nud h_B(X)$, using
Newton's expansion.  To control the remainders in the expansion, we need to be able to
control the magnitudes of the first and second differences \adbb{$\D_j h_B(X)$ and $\D^2_{jk} h_B(X)$}
\ignore{
\eq\label{ADB-h-diffs-def}
   \D_j h_B(X) \Def h_B(X+\ej) - h(X);\quad
   \D_{jk} h_B(X) \Def \D_j(\D_k h_B)(X),   
\en
}
for $1 \le j,k \le d$. We obtain bounds for these, 
given in Theorem~\ref{ADB-h-bounds}, within a ball $\nS{X - nc} \le n\d/4$, for~$\d$
small enough.  They are derived using the explicit representation
\eq\label{ADB-Poisson-eq}
   h_B(X) \Def h_{B,n}\ud(X) \Eq - \int_0^\infty (\pr[X\nud(t) \in B \giv X\nud(0) = X] - \Pi\nud\{B\})\,dt,
\en
(see Kemeny \& Snell (1960, Theorem~5.13(d); 1961, Equation~(9))), and depend on
careful analysis of the Markov process~$X\nud$.  This is carried out in Sections \ref{Xn-close-to-nc}
and~\ref{ADB-special}.
For the remainders in the expansion of $\ex\{\AA\nud h_B(W)\}$ to be small,
we also need to know that $\dtv(\law(W),\law(W+\ej))$ is small for each $1\le j\le d$,
and that $\ex\nS{W-nc}^2 \le vn$ for some constant~$v$.  
This is true if $W \sim \Pi\nud$, as is shown in
Proposition~\ref{ADB-lema2}, but needs to be proved separately for any~$W$ that is to
be approximated by~$\Pi\nud$.

As a result of these considerations, provided that $\dtv(\law(W),\law(W+\ej))$ 
is small for each $1\le j\le d$ and that $\ex\nS{W-nc}^2 \le vn$, we shall have shown, for
suitable $\d > 0$, that
$\ex\{\AA\nud h_B(W)I\nud(W)\}$ is close to $\ex\{\ABA_n h_B(W)I\nud(W)\}$, 
\adbb{where} $I\nud(X) := I[\nS{X-nc} \le n\d/3]$ and~$\ABA_n$ is as
in~\Ref{ADB-approx-gen}.
Hence, for any integer valued random vector~$W$ such that $\ex\{\ABA_n h_B(W)I\nud(W)\}$ is
uniformly small for all~$B \subset \tB_{n,\d}(c)$, $\dtv(\law(W),\law(W+\ej))$ is small for each 
$1\le j\le d$, \adbb{$\pr[W \notin \tB_{n,\d'}(c)]$ is small,} and $\ex\nS{W-nc}^2 \le vn$, it follows from~\Ref{ADB-Stein-este} that 
$\dtv(\law(W),\Pi\nud)$ is small. The precise statement of this conclusion, giving a set of
quantities that bound $\dtv(\law(W),\Pi\nud)$ for an arbitrary integer valued random $d$-vector $W$, 
is presented in Theorem~\ref{ADB-first-approx-thm}.  An application is given in Section~\ref{Sect-MPP}.

\section{The analysis of~$X\nud$: general processes}\label{Xn-close-to-nc}
\setcounter{equation}{0}

\subsection{Main assumptions}\label{ADB-assumptions}
The main arguments of the paper are based on the analysis of a sequence of Markov jump processes~$X_n$,
whose transition rates are given in~\Ref{ADB-transition-rates}.  For some $\d_0 > 0$, 
we make the following assumptions.

\medskip
\begin{description}
\item[Assumption G0.] 
The equations~\Ref{ADB-deterministic-eqns} have an equilibrium~$c$; thus \hbox{$F(c)=0$.} 

\item[Assumption G1.] 
All eigenvalues of the matrix $A := DF(c)$ have negative real parts.

\item[Assumption G2.]  
For each $J \in \JJ$, the function $g^J$ is of class~$C^2$ in the 
  \adbb{Euclidean ball} $B_{\d_0}(c) := \{x\colon |x-c| \le \d_0\}$.

\item[Assumption G3.] 
There exists $\e_0 > 0$ such that
\[
    \inf_{x \in B_{\d_0}(c)} g^J(x) \ \ge\ \e_0 g^J(c)\ =:\ \m_0^J \ >\ 0,\quad J\in\JJ. 
\]

\item[Assumption G4.] 
For each unit vector~$\ej \in \re^d$, $1\le j\le d$, there exists a finite sequence of elements
$J_{1}\uj,\ldots,J_{r(j)}\uj$ of~$\JJ$ such that
\[
    \ej \Eq \sum_{l=1}^{r(j)} J_l\uj.
\]
\end{description}

For $d$-vectors, we use $|\cdot|$ to denote the Euclidean norm, $|\cdot|_1$ to denote
the $\ell_1$-norm, and $|X|_\S$ to denote $|\S^{-1/2}X|$.  For
a $d\times d$ matrix $B$, 
we define the spectral norm
\[
     \|B\| \Def \sup_{y\in\re^d\colon|y| = 1} |By|,
\]
and use $\|B\|_1$ to denote $\sid\sjd |B_{ij}|$.  Note that, for any $d$-vector $b$ and $d\times d$
matrix~$B$, the inequalities 
\[
    d^{-1}|b|_1 \Le \sqrt{d^{-1}b^Tb} \quad \mbox{and}\quad d^{-2}\|B\|_1 \Le 
    \sqrt{d^{-2}\tr(B^TB)} \Le \sqrt{d^{-1}\|B\|^2}
\]
yield
\eq\label{Aug-norm-comparisons}
  |b|_1 \Le d^{1/2}|b| \quad \mbox{and} \quad \|B\|_1 \Le d^{3/2} \|B\|.
\en 
\adbr{For a $d\times d$ positive definite symmetric matrix~$M$, 
 we write~$\lbar(M)$ for $d^{-1}\tr(M)$, $\lmin(M)$ and~$\lmax(M)$ for its smallest and
largest eigenvalues, respectively,  and $\Rh(M) := \lmax(M)/\lmin(M)$ for its condition number;
we use $\Sp'(M)$ to denote the triple $(\lbar(M),\lmin(M),\lmax(M))$.}  

For a real function $h\colon \Z^d \to \re$, we define 
\[
  \|\D h(X)\|_\infty \Def \max_{1\le i\le d}|\D_i h(X)|;\quad
           \|\D^2 h(X)\|_\infty \Def \max_{1\le i,j\le d}|\D_{ij} h(X)|.
\]
For any $a>0$, we then set
\eqa
   \|h\|_{a,\infty} &:=&  \max\{|h(X)|\colon X \in \Z^d, |X-nc| \le a\};\non\\
   \|\D h\|_{a,\infty} &:=&  \max\{\|\D h(X)\|_\infty\colon X \in \Z^d, |X-nc| \le a\};\label{ADB-norm-E-defs}\\
   \|\D^2 h\|_{a,\infty} &:=& \max\{\|\D^2 h(X)\|_\infty\colon X \in \Z^d, |X-nc| \le a\},
       \non
\ena
for~$c$ as above. For $g\colon \re^d \to \re$ twice differentiable, we set
\[
    \|D^2g(x)\| \Def \limsup_{t\to0}\sup_{y\colon|y|=1} t^{-1}|Dg(x+ty) - Dg(x)|,
\]
\adbr{where~$D$ denotes the differential operator.}  We then define the quantities
\eq\label{ADB-L-defs}
  L_0 \Def \max_{J\in\JJ}\frac{|g^J|_{\d_0}}{g^J(c)};\quad
  L_1 \Def \max_{J\in\JJ}\frac{|D g^J|_{\d_0}}{g^J(c)};\quad
  L_2 \Def \max_{J\in\JJ}\frac{\|D^2 g^J\|_{\d_0}}{g^J(c)},
\en
finite in view of Assumptions~G2 and~G3, where $\|H\|_{\d} := \sup_{x\in B_\d(c)} \|H(x)\|$,
for any vector- or matrix-valued function~$H$ and for any choice of norm~$\|\cdot\|$.

We also define 
\eq\label{ADB-Lambda-def}
  \begin{array}{ll}
    \L \Def \sJJ g^J(c) |J|^2 \Eq \tr(\s^2); \qquad &\g \Def \sJJ g^J(c) |J|^3;\\[1ex]
     \Jmax \Def \max_{J\in\JJ} |J|; \qquad &\JmaxS \Def \max_{J\in\JJ} |\S^{-1/2}J|;\\[1ex]
     \s^2_\S \Def \S^{-1/2}\s^2\S^{-1/2};\qquad &\lla_1 \Def \half\lmin(\s^2_\S);\\[1ex]
      \Lbar \Def \lbar(\s^2) \Eq d^{-1}\L;\qquad\ \gbar \Def d^{-3/2}\g;\qquad &\m_* \Def \min_{J\in\JJ}\m_0^J,
  \end{array} 
\en
where $\s^2$ is defined in~\Ref{ADB-sigma2-def}, and~$\S$ in~\Ref{ADB-Sigma-eqn}.
In the sections that follow, we establish many bounds that depend on these basic
parameters.  They are mainly expressed as continuous functions of the elements
of the set
\eq\label{ADB-key-quantities}
  \KK \Def \{L_0,\,L_1,\,L_2,\, \e_0,\, \Sp'(\s^2/\Lbar),\,  \Sp'(\S),\, d^{-1}\Jmax,\,  \|A\|/\Lbar,\, 
     \adbr{\Lbar/\m_*,}\,\d_0\},
\en
and, with slight abuse of notation, are said to belong to the set~$\KK$.
If they are also continuous functions of another parameter, such as~$\d$, they are said to belong
to~$\KK(\d)$.   The $\Lbar$-factors ensure that the quantities
remain invariant if all the transition rates~$g^J$ are multiplied by the same constant.
In particular, constants
of the form $\k_i$ and~$K_i$ belong to~$\KK$, and the implied constants in any order
expressions also belong to~$\KK$.  

The $d$-dependence in $\lbar(\s^2)$ and~$d^{-1}\Jmax$ is put in to ensure that
the quantities do not automatically have to grow with the dimension~$d$. 
It is chosen in this way for the latter in view of
Lemma~\ref{ADB-innovations-MPP}, and for the former by comparison with $\s^2 = I$.
In order to avoid many provisos in the bounds, we shall assume throughout that $d \le n^{1/4}$,
which is ultimately no restriction, since our bounds are typically of no use 
unless~$d$ is rather smaller than~$n^{1/7}$.

We note two immediate consequences of Assumptions G3 and~G4.

\begin{lemma}\label{Dec-irreducible}
Assumptions G3 and~G4 imply that $\s^2$ is positive definite, and that, 
for any $\d > 0$, there exists $n_{\ref{Dec-irreducible}}(\d) < \infty$
such that the process~$X_n\ud$ is irreducible on~$\tB_{n,\d}(c)$, defined in~\Ref{Aug-tilde-B-def},
as long as $n \ge n_{\ref{Dec-irreducible}}(\d)$. 
\end{lemma}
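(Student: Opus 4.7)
Positive definiteness of $\s^2$ is immediate. For $y\in\re^d$, $y^T\s^2 y = \sJJ g^J(c)(y^TJ)^2 \ge 0$ by G3; if this vanishes then $y^TJ = 0$ for every $J\in\JJ$, and G4 gives $y^T\ej = \sum_l y^T J_l\uj = 0$ for each $j$, whence $y=0$.

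For irreducibility I would proceed in two stages. The first is algebraic: I claim every element of $\Z^d$ is a nonnegative integer combination of elements of $\JJ$. By G4 this already holds for each $\ej$, so it suffices to handle each $-\ej$. Assumption G0 gives $F(c) = \sJJ g^J(c) J = 0$, with $g^J(c) > 0$ for every $J\in\JJ$ by G3. Hence the integer $d\times|\JJ|$ matrix whose columns are the jumps has a strictly positive real vector in its kernel; because that kernel is a rational subspace, strictly positive rational vectors are dense in it, and clearing denominators yields strictly positive integers $(N_J)_{J\in\JJ}$ with $\sJJ N_J J = 0$. For any $J_0\in\JJ$ this rearranges to
\[
    -J_0 \Eq (N_{J_0}-1)J_0 + \sum_{J\in\JJ\setminus\{J_0\}} N_J J,
\]
a nonnegative integer combination of elements of~$\JJ$; thus $-\ej = \sum_l(-J_l\uj)$ is one too, and so $\Z^d$ equals the nonnegative integer cone generated by~$\JJ$.

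The second stage is geometric. Given $X,Y\in\tB_{n,\d}(c)$, I would build a path $X=Z_0,Z_1,\ldots,Z_N=Y$ with $Z_{k+1}-Z_k\in\JJ$ and every $Z_k\in\tB_{n,\d}(c)$. Writing $Y-X=\sJJ n_J J$ from the first stage produces a multiset of $N:=\sJJ n_J$ jumps summing to $Y-X$. Applying Steinitz's rearrangement lemma to the centred jumps $J'_m-(Y-X)/N$ yields an ordering $J'_1,\ldots,J'_N$ whose partial sums stay within $\S$-distance $C$ of the straight segment $t\mapsto X+t(Y-X)$, where $C$ depends only on $d$ and $\JmaxS$; since this segment lies in the convex set $B_{n\d,\S}(nc)$, each lattice point $Z_k$ lies in $B_{n\d+C,\S}(nc)$. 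To absorb the $C$-excess I would first perform a short greedy inward descent: because $\JJ$ positively spans $\re^d$ (a consequence of the first stage), at any $X$ with $\nS{X-nc}$ large enough there is some $J\in\JJ$ with $J^T\S^{-1}(nc-X)>0$, and $O(1)$ such inward jumps bring $X$ into $B_{n\d-C,\S}(nc)$ while remaining inside $B_{n\d,\S}(nc)$. Applying the same preparation in reverse to~$Y$ and then splicing in the Steinitz path between the prepared endpoints yields the required connecting path in~$\tB_{n,\d}(c)$, as soon as $n\d$ exceeds a constant multiple of~$C$. Every jump has strictly positive rate because, for $\d$ small enough that $B_{\d,\S}(c)\subset B_{\d_0}(c)$, G3 gives $g^J(n^{-1}X)\ge\m_0^J>0$ throughout~$\tB_{n,\d}(c)$.

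The principal obstacle is the second stage: ordering prescribed jumps so that partial sums remain in an off-centre convex body, and splicing this with the inward descent at the boundary to absorb the unavoidable $O(1)$ Steinitz error. The threshold $n_{\ref{Dec-irreducible}}(\d)$ is then essentially the value of~$n$ at which $n\d$ dominates this Steinitz constant.
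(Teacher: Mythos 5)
Your positive–definiteness argument is exactly the paper's. For irreducibility you take a genuinely different route, and in one respect a more complete one. The paper treats the two‑way communication of $X$ and $X\pm\ej$ (well inside the ball) as immediate and then only needs to funnel an arbitrary state into the shrunken ball of radius $n\d-\rmax\JmaxS$, which it does with the quantitative fact, extracted from $F(c)=0$ plus compactness of the unit $\S$-sphere, that every unit vector $u$ admits some $J\in\JJ$ with $u^T\S^{-1}J<-k_*$, a single such jump decreasing $\nS{X-nc}$ by at least $k_*/2$ once $\nS{X-nc}\ge\max\{k_*,(\JmaxS)^2/k_*\}$. Your first stage — turning $F(c)=0$ with $g^J(c)>0$ into a strictly positive \emph{integer} relation $\sJJ N_J J=0$ via rationality of the kernel, hence putting $-\ej$, and so all of $\Z^d$, in the semigroup generated by $\JJ$ — is precisely the justification that the paper's ``immediate'' step silently relies on (G4 by itself only yields $+\ej$), so there your write-up is sharper than the source. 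Your second stage is heavier than necessary: the paper never has to order a long prescribed jump sequence, since everything is connected inside the shrunken ball by coordinate moves whose $\JJ$-words make excursions of size at most $\rmax\JmaxS$; your Steinitz-along-the-chord construction also works and makes the boundary bookkeeping explicit, at the price of the rearrangement lemma.

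Two points need tightening. First, ``$O(1)$ inward jumps'' needs the uniform constant: the existence of some $J$ with $J^T\S^{-1}(nc-X)>0$ neither forces a norm decrease (you need $2J^T\S^{-1}(nc-X)>\nS{J}^2$) nor a decrease bounded away from zero; the compactness argument giving $k_*$, exactly as in the paper, supplies both. Second, and more substantively, ``applying the same preparation in reverse to $Y$'' cannot mean reversing an inward path from $Y$: the reversed steps are $-J$ with $J\in\JJ$, and $-J$ need not belong to $\JJ$; replacing each $-J$ by its $\JJ$-word creates $O(1)$ excursions that can leave the ball precisely when $Y$ is within $O(1)$ of the boundary, which is the only case in which preparation is needed. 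The correct construction is to build the final approach to $Y$ backwards: at the current endpoint $Y'$ pick $J\in\JJ$ with $J^T\S^{-1}(Y'-nc)\ge k_{**}\nS{Y'-nc}$ (the same positive-spanning/compactness fact, applied to the outward direction), so that the predecessor $Y'-J$ lies strictly deeper inside; $O(1)$ such steps yield a directed path from depth $n\d-C$ to $Y$ that never leaves the ball. With these two repairs (and with your restriction that $\d$ be small enough for G3 to give positive rates on the whole ball, a restriction the paper's own argument also needs even though the lemma is stated for every $\d>0$), your proof is correct.
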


\nin{\it Proof.}\
For the first statement, if $x^T\s^2x = 0$, then $x^TJ = 0$ for all $J\in\JJ$,
because of Assumption~G3.  This, 
from Assumption~G4, implies that $x^T\ej = 0$ for all $1\le j\le d$, so that $x=0$.

For the second statement,
setting $\rmax := \max_{1\le j\le d}r(j)$, it is immediate that, under the transitions for the
Markov process~$X_n\ud$, the states $X$ and~$X\pm\ej$ communicate,
for all $1\le j\le d$, as long as $\nS{X-nc} < n\d - \rmax\JmaxS$. Hence, starting from an~$X$ with
$\nS{X-nc} \le \max_{1\le j\le d} \nS{\ej}$, it follows that all states~$X$ with 
$\nS{X-nc} < n\d - \rmax\JmaxS$ intercommunicate. 

For the remainder, we note that,
because the set~$\JJ$ is finite, the infimum $\inf_{u\in\re^d\colon \nS{u}=1} \min_{J\in\JJ} u^T \S^{-1} J$
is attained at some~$u_*$.  Then $\min_{J\in\JJ} u_*^T \S^{-1} J \ge 0$ together with $F(c)=0$ would
imply that $u_*^T \S^{-1} J = 0$ for all $J\in\JJ$; and this is impossible, as argued above.  
Hence there exists $k_*>0$ such that, for all~$u$ with $\nS{u}=1$,
$\min_{J\in\JJ}u^T \S^{-1} J < -k_*$; without loss of generality, we can also take $k_* \le 1$.

Taking any $X$ with $\nS{X-nc} \le n\d$, write $X - nc = xu$, for $u \in \re^d$ with $\nS{u} = 1$
and $x \ge 0$.
Then, noting that $\sqrt{1-y} \le 1 - y/2$ in $0\le y\le 1$, we have
\eqs
   \min_{J\in\JJ} \nS{X+J - nc} 
           &=&  \min_{J\in\JJ} \Bigl\{\nS{X-nc}^2 + 2(X-nc)^T \S^{-1} J + \nS{J}^2 \Bigr\}^{1/2} \\
           &\le&   x\Bigl\{1 - 2x^{-1}k_* + x^{-2}\{\JmaxS\}^2 \Bigr\}^{1/2} \\
           &\le& x - k_*/2,
\ens
provided that $x \ge \max\{k_*,\{\JmaxS\}^2/k_*\}$.  Thus each state with $\nS{X-nc} \le n\d$ communicates with
some state~$X'$ for which $\nS{X'-nc} \le \nS{X-nc} - k_*/2$, and hence, repeating this step, with one
such that $\nS{X-nc} < n\d - \rmax\JmaxS$.  Combining these results, we see that~$X_n\ud$ is irreducible,
provided that 
\[
   \phantom{XXX}
      n\ \ge\ n_{\ref{Dec-irreducible}}(\d) \Def \d^{-1}\bigl\{(\rmax+1)\JmaxS + \max\{k_*\adbb{,}\{\JmaxS\}^2/k_*\}\bigr\}.
           \phantom{XXX}\qed
\]

If Assumption~G4 is {\it not\/} satisfied, then the lattice generated by the jumps in~$\JJ$ is a proper
sub-lattice of~$\Z^d$. 

\subsection{$X\nud$ stays close to~$nc$}\label{ML-close-to-nc}
In this section, we show that, whatever its initial value~$X\nud(0)$, the process~$X\nud$
rapidly gets close to~$nc$.  Thereafter,
it remains close to~$nc$ with high probability for a very long time.
To formulate our results, we define the hitting times
\eqa
    \t\nud(\h) \Def \inf\{u\ge0\colon\, \nS{X\nud(u) - nc} \ge n\h\}; \non\\
     \ttt\nud(\h) \Def  \inf\{u\ge0\colon\, \nS{X\nud(u) - nc} \le n\h\},           
                   \label{ADB-tau-delta-def}
\ena
for any $0 < \h \le \d \le \d_0$.

We begin by establishing some Lyapunov--Foster--Tweedie drift conditions, showing that~$X\nud$ 
has a strong tendency to drift towards~$nc$ in the $\nS{\cdot}$ norm.

\begin{lemma}\label{ML-drift-lemma-1}
Let~$X_n$ be a sequence of Markov jump processes,
whose transition rates are given in~\Ref{ADB-transition-rates},
and such that Assumptions G0--G4 are satisfied.  Define
\eqs
 h_0(X) &:=& (X-nc)^T\S^{-1}(X-nc) \Eq \nS{X-nc}^2; \\
 h_\th(X) &:=& \exp\{n^{-1}\th h_0(X)\},\ \th > 0.
\ens
Then there exist positive constants $K_{\ref{ML-drift-lemma-1}}, \d_{\ref{ML-drift-lemma-1}}$ and~$\th_1$ in~$\KK$ 
and $\d'_{\ref{ML-drift-lemma-1}}(d) \in \KK(d)$
such that, for any $\d \le \min\{\d_{\ref{ML-drift-lemma-1}},\d'_{\ref{ML-drift-lemma-1}}(d)\}$
and any $X \in \tB_{n,\d}(c)$ with $\nS{X-nc} \ge  K_{\ref{ML-drift-lemma-1}}\sqrt{nd}$, we have
$$
   \AA\nud h_0(X) \Le -\lla_1 h_0(X) ;\quad 
    \AA\nud h_\th(X) \Le  -\half n^{-1} \lla_1 \th h_0(X) h_\th(X),\ 0 < \th \le \th_1;
$$
for the latter inequality, we also require that $n\ge n_{\ref{ML-drift-lemma-1}} \in \KK$.
The quantities $K_{\ref{ML-drift-lemma-1}}, \d_{\ref{ML-drift-lemma-1}}$, 
$\d'_{\ref{ML-drift-lemma-1}}(d)$ and~$\th_1$  are given in \Ref{ADB-K1-def}, \Ref{ADB-delta1-def} 
and~\Ref{ADB-theta1-def}.
\end{lemma}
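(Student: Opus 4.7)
The plan is to first observe that both $h_0$ and $h_\th$ are monotone increasing functions of $\nS{X-nc}^2$, so every transition that is suppressed in passing from the unrestricted generator
\[
  \AAA_n h(X) \Def n\sJJ g^J(n^{-1}X)\{h(X+J) - h(X)\}
\]
to $\AA\nud$---namely those $J$ for which $X\in\tB_{n,\d}(c)$ but $X+J\notin\tB_{n,\d}(c)$---contributes a non-negative increment to $h$. Suppressing them can therefore only decrease the action of the generator on $h_0$ and $h_\th$, so it is enough to prove both inequalities with $\AAA_n$ in place of $\AA\nud$.

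For the first (polynomial) bound, set $Y := X-nc$ and write
\[
  \AAA_n h_0(X) \Eq n\sJJ g^J(n^{-1}X)\bigl\{2Y^T\S^{-1}J + J^T\S^{-1}J\bigr\}.
\]
A second-order Taylor expansion of each $g^J$ about $c$, valid by Assumption G2, together with the identities $\sJJ J\,g^J(c) = F(c) = 0$ and $\sJJ J(Dg^J(c))^T = A$, reduces the drift contribution to $2Y^T\S^{-1}AY$ plus a Taylor remainder of order $L_2\lmax(\S)\JmaxS d\Lbar\,\nS{Y}^3/n$. Conjugating the Lyapunov equation~\Ref{ADB-Sigma-eqn} by $\S^{-1}$ yields $\S^{-1}A + A^T\S^{-1} = -\S^{-1}\s^2\S^{-1}$, so the leading drift equals $-Y^T\S^{-1}\s^2\S^{-1}Y = -(\S^{-1/2}Y)^T\s^2_\S(\S^{-1/2}Y) \le -2\lla_1 h_0(X)$ by definition of~$\lla_1$. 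The Taylor remainder is dominated by $\lla_1 h_0(X)$ once $\nS{Y}/n$ is small enough; the $d$-independent and $d$-dependent parts of the resulting bound on $\d$ determine $\d_{\ref{ML-drift-lemma-1}}\in\KK$ and $\d'_{\ref{ML-drift-lemma-1}}(d)\in\KK(d)$, respectively. Finally, the purely quadratic contribution is bounded by $L_0 n\,\tr(\S^{-1}\s^2) = O(nd)$, and is absorbed into $\lla_1 h_0(X)$ precisely when $h_0(X) \ge K_{\ref{ML-drift-lemma-1}}^2 nd$, which fixes~$K_{\ref{ML-drift-lemma-1}}$.

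For the exponential bound, set $\psi := \th/n$, so that $h_\th = e^{\psi h_0}$, and use
\[
  \AAA_n h_\th(X) \Eq h_\th(X)\sJJ ng^J(n^{-1}X)\bigl(e^{\psi(h_0(X+J)-h_0(X))} - 1\bigr),
\]
together with $e^t-1 \le t + \tfrac12 t^2 e^{|t|}$. The linear piece reproduces $\psi\AAA_n h_0(X)$, already controlled by $-\psi\lla_1 h_0(X)$ from the first step. For the quadratic piece, the estimate $|h_0(X+J)-h_0(X)| \le 2\JmaxS\nS{Y} + (\JmaxS)^2$ together with $\nS{Y} \le n\d$ shows that $\th \le \th_1$, for a suitable $\th_1\in\KK$, keeps $\psi|h_0(X+J)-h_0(X)| = O(1)$ uniformly, bounding the exponential factor by a constant. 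Combined with $(h_0(X+J)-h_0(X))^2 \le 8(\JmaxS)^2 h_0(X) + 2(\JmaxS)^4$ and $\sJJ g^J(c) \le d\Lbar$, the quadratic piece is bounded by $C\psi^2 n(h_0(X)+1)$; shrinking $\th_1$ further absorbs the $h_0$-proportional part into $\tfrac12\psi\lla_1 h_0(X)$, while the additive term $C\psi^2 n$ is swallowed using $h_0(X) \ge K_{\ref{ML-drift-lemma-1}}^2 nd$ as soon as $n \ge n_{\ref{ML-drift-lemma-1}}$ for a suitable $n_{\ref{ML-drift-lemma-1}}\in\KK$, yielding the claimed bound $\AAA_n h_\th(X) \le -\tfrac12 n^{-1}\lla_1\th h_0(X) h_\th(X)$.

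The main obstacle is bookkeeping: verifying at each step that the constants $\d_{\ref{ML-drift-lemma-1}}$, $\d'_{\ref{ML-drift-lemma-1}}(d)$, $K_{\ref{ML-drift-lemma-1}}$, $\th_1$ and $n_{\ref{ML-drift-lemma-1}}$ constructed along the way are indeed continuous functions of the elements of $\KK$ (respectively $\KK(d)$), and that the cut-off in $\AA\nud$ does not spoil the monotonicity argument in the first step for states $X$ close to the boundary of $\tB_{n,\d}(c)$.
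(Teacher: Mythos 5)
Your proposal is correct and follows essentially the same route as the paper's proof: the same generator computation for $h_0$, linearization of $F$ at $c$ with an $L_2$-controlled remainder, the conjugated Lyapunov equation giving the leading term $-2\lla_1 h_0(X)$, absorption of the trace term using $\nS{X-nc}\ge K\sqrt{nd}$, the monotonicity observation to dispose of the truncation near the boundary of $\tB_{n,\d}(c)$, and an elementary exponential inequality with $\th\le\th_1$ for the $h_\th$ bound. The only point to tidy is the allocation of margins in the first inequality: the trace term and the Taylor remainder must each be absorbed into $\tfrac12\lla_1 h_0(X)$ rather than $\lla_1 h_0(X)$, so that the leading $-2\lla_1 h_0(X)$ still leaves the claimed $-\lla_1 h_0(X)$.
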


\begin{proof}
 It is immediate that, for the above choice of~$h_0$,
\[
   h_0(X+J) - h_0(X) \Eq J^T\S^{-1}(X-nc) + (X-nc)^T\S^{-1}J + J^T\S^{-1}J.
\]
Multiplying by $ng_\d^J(x)$, where~$x := n^{-1}X$, and adding over~$J$, we have
\eq\label{ML-A-Xsquared}
   \AA\nud h_0(X) \Eq n\{F(x)^T\S^{-1}(X-nc) + (X-nc)^T\S^{-1}F(x) + \tr(\S^{-1}\s^2(x))\},
\en
as long as $\nS{X-nc} < n\d - \JmaxS$, where~$F$ is as defined in~\Ref{ADB-deterministic-eqns},
and~$\s^2$ as in~\Ref{ADB-sigma2-def}. For $\nS{X-nc} \ge n\d - \JmaxS$, the 
truncation~\Ref{ADB-transition-rates-delta} may change this expression: see below. 
Now, using~\Ref{ADB-L-defs}, for $x,y \in B_{\d_0}(c)$, we have
\eqa
   |F(x) - F(y) - A(x-y)| &\le& \frac12\sJJ |J| g^J(c) L_2 |x-y|\{|x-y| + 2|y-c|\} \non\\
   &\le&  \L L_2|x-y|\{|x-y| + |y-c|\}. \label{ML-F-approx}
\ena
Substituting \Ref{ML-F-approx}, with $y=c$, into~\Ref{ML-A-Xsquared}, and 
using~\Ref{ADB-Sigma-eqn}, we have
\eqa
   \AA\nud h_0(X) &\le& -(X-nc)^T \S^{-1}\s^2\S^{-1}(X-nc) + n\tr(\S^{-1}\s^2(x)) \non\\
     &&\mbox{}\hskip0.3in   + 2\L L_2 n^{-1}\|\S^{-1/2}\|\,|X-nc|^2\, \nS{X-nc}.  
    \label{ML-first-drift-bound}
\ena
Using the inequalities
\eqa
    &&(X-nc)^T \S^{-1}\s^2\S^{-1}(X-nc) \ \ge\ \lmin(\s^2_\S)\, \nS{X-nc};\non\\
   &&\lmin(\S)\, \adbg{\nS{X-nc}^2} \Le |X-nc|^2 \Le \lmax(\S)\, \nS{X-nc}^2, 
  \label{ML-quad-lower-bound}
\ena
it first follows that $(X-nc)^T \S^{-1}\s^2\S^{-1}(X-nc) \ge 2\lla_1 \nS{X-nc}^2$.
Then
\eq\label{ADB-trace-inequality}
   n\tr(\S^{-1}\s^2(x)) \Le nL_0 \tr(\s^2_\S) \Le \half \lla_1 \nS{X-nc}^2
\en
if $\nS{X-nc} \ge K_{\ref{ML-drift-lemma-1}} \sqrt{nd}$, where
\eq\label{ADB-K1-def}
   K_{\ref{ML-drift-lemma-1}}^2 \adbr{\Def \frac{2L_0}{d\lla_1}\,\tr(\s^2_\S)} \Le 4 L_0 \Rh(\s^2)\Rh(\S),
\en 
since $(1/\adbb{2}d\lla_1)\tr(\s^2_\S) \le \Rh(\s^2_\S) \le \Rh(\s^2)\Rh(\S)$.
Finally,
\eq\label{ADB-finally}
   2\L L_2 n^{-1}\|\S^{-1/2}\|\,|X-nc|^2\, \nS{X-nc} \Le \half \lla_1 \nS{X-nc}^2
\en
if $\nS{X-nc} \le n\, \min\{\d_{\ref{ML-drift-lemma-1}},\d'_{\ref{ML-drift-lemma-1}}(d)\}$, where
\eq\label{ADB-delta1-def}
   \d_{\ref{ML-drift-lemma-1}} \Def 
            \frac{\d_0}{\sqrt{\lmax(\S)}};\quad \d'_{\ref{ML-drift-lemma-1}}(d) \Def \frac{1}{d}\,
              \frac{\lla_1\sqrt{\lmin(\S)}}{4\Lbar L_2 \lmax(\S)}.
\en 
This proves the first part of the lemma for all~$X$ such that 
$\nS{X-nc} < n\d - \JmaxS$.

If $n\d - \JmaxS \le \nS{X-nc} \le n\d$, we may have $g^J(n^{-1}X) > g_\d^J(n^{-1}X) = 0$
for some~$J$.  However, from the definition of~$h_0$, these~$J$ represent transitions for
which $h_0(X+J) - h_0(X) > 0$, and replacing $g^J(n^{-1}X)$ by zero makes the value of $\AA\nud h_0(X)$
even smaller than that given in~\Ref{ML-A-Xsquared}, and hence preserves the 
inequality~\Ref{ML-first-drift-bound}.

For the second part, taking $\d \le \d_{\ref{ML-drift-lemma-1}}$, we note that 
$e^x - 1 \le x + x^2$ in $x \le 1$.
Now, for $\JmaxS \le n\d_{\ref{ML-drift-lemma-1}}$ and $\nS{X-nc} \le n\d_{\ref{ML-drift-lemma-1}}$, we have
\eqs
   \frac\th n|h_0(X+J) - h_0(X)| &\le&  \frac\th n\{2\JmaxS\,\nS{X-nc} + (\JmaxS)^2\} 
   \Le   3\th\JmaxS\d_{\ref{ML-drift-lemma-1}},
\ens
and $\JmaxS \le n\d_{\ref{ML-drift-lemma-1}}$ if 
$n \ge (d^{-1}\JmaxS/\d_{\ref{ML-drift-lemma-1}})^{4/3} =: n_{\ref{ML-drift-lemma-1}}$, 
because $n\ge d^4$. Hence
it follows that $n^{-1}\th|h_0(X+J) - h_0(X)| \le 1$ for all $X \in \tB_{n,\d}(c)$, if~$\th\le \th_1$,
$n \ge n_{\ref{ML-drift-lemma-1}}$ and 
\eq\label{Oct-theta-1.0}
    \th_1 \JmaxS \d_{\ref{ML-drift-lemma-1}} \Le 1/3;
\en
note that then $d\th_1 \in \KK$.
Then, for~$X$ such that $\nS{X-nc} < n\d - \JmaxS$, and with $x := n^{-1}X$,
\[
   \AA\nud h_\th(X) \Eq nh_\th(X) \sJJ g^J(x)\bigl\{e^{n^{-1}\th(h_0(X+J) - h_0(X))} - 1 \bigr\}.
\]
Hence, if $\nS{X-nc} < n\d - \JmaxS$, we have
\eqs
  \leqn{ n\sJJ g^J(x)\bigl\{e^{n^{-1}\th(h_0(X+J) - h_0(X))} - 1 \bigr\} } \\
   &&\Le n^{-1}\th \AA\nud h_0(X) +  n \sJJ g^J(x) n^{-2}\th^2 |h_0(X+J) - h_0(X)|^2.
\ens
Since
\eqs
   |h_0(X+J) - h_0(X)|^2 &\le& \{2\nS{X-nc}\nS{J} + \nS{J}^2\}^2 \\
    &\le& \nS{J}^2(8\nS{X-nc}^2 + 2(\JmaxS)^2),
\ens
it follows in turn that, if $\d \le \min\{\d_{\ref{ML-drift-lemma-1}},\d'_{\ref{ML-drift-lemma-1}}(d)\}$, then  
\eqs
\leqn{ n\sJJ g^J(x)\bigl\{e^{n^{-1}\th(h_0(X+J) - h_0(X))} - 1 \bigr\} } \\
   &&\Le -n^{-1}\th \lla_1 h_0(X) + 2n^{-1}\th^2 L_0\tr(\s^2_\S)\{4h_0(X)  + (\JmaxS)^2\},
\ens 
if $\th \le \th_1$.  But now, if~$\th_1$ is also chosen so that 
\eq\label{Oct-theta-1.1}
    8d\th_1 L_0\lmax(\s^2_\S) \Le \quarter\lla_1 \Eq \tfrac18 \lmin(\s^2_\S), 
\en
we have $8\th^2 L_0\tr(\s^2_\S)h_0(X) \le \quarter\th\lla_1 h_0(X)$, and if
\eq\label{Oct-theta-1.2}
    2 d\th_1 L_0 \lmax(\s^2_\S) \, (\JmaxS)^2  
        \Le \adbr{\quarter \lla_1\,d K_{\ref{ML-drift-lemma-1}}^2 ,}
\en
and $\nS{X-nc} \ge K_{\ref{ML-drift-lemma-1}}\sqrt{nd}$, we have
$2\th^2 L_0\tr(\s^2_\S)(\JmaxS)^2 \le \quarter \th\lla_1 h_0(X)$ also, so that then
\eq\label{ML-second-MG}
   n\sJJ g^J(x)\bigl\{e^{n^{-1}\th(h_0(X+J) - h_0(X))} - 1 \bigr\}
         \Le -\half n^{-1} \lla_1 \th h_0(X).
\en 
Note that \Ref{Oct-theta-1.0}, \Ref{Oct-theta-1.1} and~\Ref{Oct-theta-1.2} are satisfied by choosing
\eq\label{ADB-theta1-def}
    d\th_1 \Eq \min\{1/(\adbb{3}d^{-1}\JmaxS \d_{\ref{ML-drift-lemma-1}}), \adbr{1/(64 L_0 \Rh(\s^2)\Rh(\S))},
         1/4(d^{-1}\JmaxS)^2 \} \ \in\ \KK,
\en
since we assume that $n \ge d^4$.
As for the first part, if 
$n\d - \JmaxS \le \nS{X-nc} \le n\d$, the inequality~\Ref{ML-second-MG} is 
still true, completing the
proof of the second statement of the lemma.
\end{proof}

\begin{remark}\label{ADB-delta1=delta0}
{\rm
 If the functions~$g^J$ are {\it linear\/} within $B_{\d_0,\S}$, then $L_2=0$, and we 
can take $\min\{\d_{\ref{ML-drift-lemma-1}},\d'_{\ref{ML-drift-lemma-1}}(d)\} = 
\d_{\ref{ML-drift-lemma-1}} = \d_0/\sqrt{\lmax(\S)}$.
}
\end{remark}

The first of the drift inequalities in Lemma~\ref{ML-drift-lemma-1} is now used to show 
that~$X\nud$ quickly reaches even small balls around~$nc$, if 
$\d \le \min\{\d_{\ref{ML-drift-lemma-1}},\d'_{\ref{ML-drift-lemma-1}}(d)\}$.

\begin{lemma}\label{ML-time-to-root-n}
Let~$X_n$ be a sequence of Markov jump processes,
whose transition rates are given in~\Ref{ADB-transition-rates},
and such that Assumptions G0--G4 are satisfied.
Let~$\lla_1$ be as in~\Ref{ADB-Lambda-def} and $K_{\ref{ML-drift-lemma-1}}$, 
$\d_{\ref{ML-drift-lemma-1}}$ and $\d'_{\ref{ML-drift-lemma-1}}(d)$ as in Lemma~\ref{ML-drift-lemma-1}.
Then, if $\d \le \min\{\d_{\ref{ML-drift-lemma-1}},\d'_{\ref{ML-drift-lemma-1}}(d)\}$
and $\h > \max\{K_{\ref{ML-drift-lemma-1}}\sqrt{d/n},2n^{-1}\JmaxS\}$, we have
\[
   \pr[\ttt\nud(\h) > t  \giv X\nud(0) = X_0] \Le  4(n\h)^{-2}\nS{X_0-nc}^2\, e^{-\lla_1t}.
\]
\end{lemma}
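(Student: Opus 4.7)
The plan is a standard exponential supermartingale argument based on the quadratic Lyapunov function $h_0(X) = \nS{X-nc}^2$ and the first drift inequality of Lemma~\ref{ML-drift-lemma-1}. Abbreviate $\ttt := \ttt\nud(\h)$; if $\nS{X_0 - nc} \le n\h$ then $\ttt = 0$ and the bound is vacuous, so I assume $\nS{X_0 - nc} > n\h$.

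The key observation is that, under the hypothesis $\h > K_{\ref{ML-drift-lemma-1}}\sqrt{d/n}$, we have $n\h > K_{\ref{ML-drift-lemma-1}}\sqrt{nd}$, so Lemma~\ref{ML-drift-lemma-1} furnishes the pointwise estimate $\AA\nud h_0(X) \le -\lla_1 h_0(X)$ for every $X \in \tB_{n,\d}(c)$ with $\nS{X-nc} \ge n\h$. Define
$$M(t) \Def e^{\lla_1 t} h_0(X\nud(t)),$$
and apply the time-dependent Dynkin formula to the function $(s,X) \mapsto e^{\lla_1 s} h_0(X)$: on the event $\{s < \ttt\}$ the integrand $e^{\lla_1 s}(\lla_1 h_0 + \AA\nud h_0)(X\nud(s))$ is non-positive, so $M(t\wedge\ttt)$ is a non-negative supermartingale and
$$\bE[M(t \wedge \ttt)] \Le M(0) \Eq \nS{X_0 - nc}^2.$$
Integrability is not an issue, since $h_0$ is bounded on the finite state space $\tB_{n,\d}(c)$ on which $X\nud$ lives, so the stopped process is uniformly bounded on any finite time interval.

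To extract the tail bound, I would observe that on $\{\ttt > t\}$ one has $t \wedge \ttt = t$ and $\nS{X\nud(t) - nc} > n\h$, so $M(t \wedge \ttt) > e^{\lla_1 t}(n\h)^2$. Markov's inequality therefore yields
$$\pr[\ttt > t \mid X\nud(0) = X_0] \Le (n\h)^{-2} e^{-\lla_1 t} \nS{X_0 - nc}^2,$$
which is already a little sharper than the claimed bound. The factor of~$4$ in the stated inequality is slack from this argument; it plausibly arises in an alternative packaging that lower-bounds $h_0(X\nud(t\wedge\ttt))$ at the hitting time itself, in which case the second hypothesis $n\h > 2\JmaxS$ is exactly what ensures that a jump into the ball cannot reduce $\nS{\cdot}$ below $n\h - \JmaxS \ge n\h/2$, delivering $h_0(X\nud(\ttt)) \ge (n\h)^2/4$. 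There is no real obstacle to this proof; the one point requiring care is verifying that the drift inequality is available throughout the region $\{\nS{X-nc} > n\h\}$, which is precisely what the assumption on~$\h$ encodes.
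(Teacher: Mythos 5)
Your proof is correct and follows essentially the same route as the paper: the same supermartingale $M(t)=e^{\lla_1 t}h_0(X\nud(t))$ stopped at the hitting time, built from the first drift inequality of Lemma~\ref{ML-drift-lemma-1}, which applies because $n\h > K_{\ref{ML-drift-lemma-1}}\sqrt{nd}$. The only divergence is the last step: the paper applies optional stopping at $\ttt\nud(\h)$ itself, bounding $\ex\{e^{\lla_1\ttt\nud(\h)}\}$ by $\nS{X_0-nc}^2/(n\h-\JmaxS)^2$ and then invoking $n\h\ge 2\JmaxS$ --- exactly the packaging you surmised as the source of the factor~$4$ --- whereas your Markov-inequality-at-fixed-$t$ variant is equally valid, does not need the hypothesis $\h>2n^{-1}\JmaxS$, and gives the marginally sharper constant~$1$.
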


\begin{proof}
As before,
let $h_0(X) := \nS{X-nc}^2$, and define $M_0(t) := h_0(X\nud(t))e^{\lla_1 t}$.  Then
it follows from the first part of Lemma~\ref{ML-drift-lemma-1}, by a standard argument, that 
$M_0(t\wedge\ttt\nud(K_{\ref{ML-drift-lemma-1}}\sqrt{d/n}))$, $t\ge0$, is a non-negative supermartingale
\adbr{with respect to the filtration~$\adbb{\FF^{X\nud} :=} (\FF_t^{X\nud},\,t\ge0)$ generated by~$X\nud$.}  
This implies that
\eqs
  \leqn{(n\h - \JmaxS)^2 \ex\bigl\{e^{\lla_1 \ttt\nud(\h)} \bone\{\ttt\nud(\h) \le t\} 
                  \giv X\nud(0) = X_0\bigr\}} \\
           &&  \Le \ex\{M_0(t\wedge\ttt\nud(\h)) \giv X\nud(0) = X_0\} \Le h_0(X_0),
\ens
since $h_0(X\nud(\ttt\nud(\h))) \ge (n\h - \JmaxS)^2$, because the jumps of~$X\nud$
are bounded in $\S$-norm by~$\JmaxS$.
Letting $t\to\infty$, we have
\[
    \ex\bigl\{e^{\lla_1 \ttt\nud(\h)}\giv X\nud(0) = X_0\bigr\} 
               \Le \Blb\frac{\nS{X_0-nc}}{n\h - \JmaxS}\Brb^2.
\]
The lemma now follows immediately.
\end{proof}

The second drift inequality in Lemma~\ref{ML-drift-lemma-1} implies that the process~$X\nud$ takes
a long time to get far away from neighbourhoods of~$nc$.  For use in what follows, we define
\eq\label{Oct-psi-def}
   \smh(n) \Def 4\sqrt{\frac{\log n}{(d\th_1)n^{3/4}}} \quad\mbox{and}\quad
              \smh^{-1}(\h) \Def \min\{n\ge4\colon\,\smh(n) \le \h\}.
\en

\begin{lemma}\label{ADB-lema3}
Let~$X_n$ be a sequence of Markov jump processes,
whose transition rates are given in~\Ref{ADB-transition-rates},
and such that Assumptions G0--G4 are satisfied.
Then there exists $K_{\ref{ADB-lema3}} \in \KK$ such that, for all 
$\h \le \d \le \min\{\d_{\ref{ML-drift-lemma-1}},\d'_{\ref{ML-drift-lemma-1}}(d)\}$ and for~$\th_1$ as in
Lemma~\ref{ML-drift-lemma-1}, we have
\[
   \pr[\t\nud(\h) \le t \giv X\nud(0) = X_0] 
           \Le (n K_{\ref{ADB-lema3}} \L t + \exp\{n^{-1}\th_1\nS{X_0-nc}^2\})e^{-n\th_1\h^2},
\]
if $n \ge n_{\ref{ML-drift-lemma-1}}$.
In particular, for any $\d \le \min\{\d_{\ref{ML-drift-lemma-1}},\d'_{\ref{ML-drift-lemma-1}}(d)\}$,
for any $\h \le \d$, and for any~$T>0$, there exists $n_{\ref{ADB-lema3}}(T) \in \KK(\Lbar T)$
such that, for all $\nS{X_0-nc} \le n\h/2$ and $t \le T$, we have
\[
   \pr[\t\nud(3\h/4) \le t \giv X\nud(0) = X_0] \Le 2n^{-4},
\]
as long as~$n \ge \max\{n_{\ref{ADB-lema3}}(T),\smh^{-1}(\h)\}$.
The quantities $K_{\ref{ADB-lema3}}$ and~$n_{\ref{ADB-lema3}}(T)$ are defined in \Ref{Dec-K2.4-def} 
and~\Ref{Dec-n2.4(T)-def}, respectively.  
\end{lemma}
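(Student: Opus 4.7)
My plan is to apply Dynkin's formula to the exponential Lyapunov function $h_\th$ of Lemma~\ref{ML-drift-lemma-1}(b), stopped at the bounded time $t\wedge\t\nud(\h)$. By that lemma, $\AA\nud h_\th(X)\le 0$ for all $X$ outside the small ball $S := \{X\in\tB_{n,\d}(c)\colon\nS{X-nc} < K_{\ref{ML-drift-lemma-1}}\sqrt{nd}\}$, so the only contribution to the integrated drift can come from $S$. I would first establish a uniform crude upper bound $\AA\nud h_\th(X)\le K_{\ref{ADB-lema3}}n\L$ on $S$, with $K_{\ref{ADB-lema3}}\in\KK$. This uses that $h_\th(X)\le\exp\{\th_1 K_{\ref{ML-drift-lemma-1}}^2 d\}\in\KK$ on $S$ (because $d\th_1\in\KK$), the inequality $|e^y-1|\le 2|y|$ in $|y|\le 1$ (the relevant condition $n^{-1}\th_1|h_0(X+J)-h_0(X)|\le 1$ is already verified in the proof of Lemma~\ref{ML-drift-lemma-1} for $n\ge n_{\ref{ML-drift-lemma-1}}$), and the elementary estimate $|h_0(X+J)-h_0(X)|\le 2\nS{X-nc}\nS{J}+\nS{J}^2$, together with $\sum_{J\in\JJ}g^J(c)\le\L$ (since $|J|\ge 1$ implies $\L=\sum g^J(c)|J|^2\ge\sum g^J(c)$).

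With these drift estimates, Dynkin's formula and the nonpositivity of $\AA\nud h_\th$ off $S$ will give
\[
   \ex\{h_\th(X\nud(t\wedge\t\nud(\h)))\giv X\nud(0)=X_0\}
   \Le h_\th(X_0) + K_{\ref{ADB-lema3}} n\L t.
\]
Since $h_\th(X\nud(t\wedge\t\nud(\h)))\ge\bone\{\t\nud(\h)\le t\}\,e^{n\th\h^2}$ by the definition of~$\t\nud(\h)$, and since $h_\th(X_0)=\exp\{n^{-1}\th\nS{X_0-nc}^2\}$, taking $\th=\th_1$ and invoking Markov's inequality delivers the first displayed bound.

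The second statement will then follow by applying the first one with $\h':=3\h/4$ and $\nS{X_0-nc}\le n\h/2$. The exponent of the $X_0$-term becomes $n^{-1}\th_1(n\h/2)^2-n\th_1(3\h/4)^2=-5n\th_1\h^2/16$. For $n\ge\smh^{-1}(\h)$, the definition of~$\smh$ in~\Ref{Oct-psi-def} gives $n^{3/4}\h^2\ge 16\log n/(d\th_1)$, so that $n\th_1\h^2\ge 16n^{1/4}\log n/d\ge 16\log n$ by $n\ge d^4$. This makes the first term at most $n^{-5}$, while the second is at most $K_{\ref{ADB-lema3}}n\L T e^{-9n\th_1\h^2/16}\le K_{\ref{ADB-lema3}}\Lbar dT n^{-8}\le n^{-4}$ as soon as $n\ge n_{\ref{ADB-lema3}}(T):=(K_{\ref{ADB-lema3}}\Lbar T)^{4/15}$ (using $d\le n^{1/4}$), which indeed lies in $\KK(\Lbar T)$. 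Summing the two terms yields $2n^{-4}$.

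The main obstacle is keeping the constant $K_{\ref{ADB-lema3}}$ genuinely in $\KK$ (and in particular independent of $n$ and $d$) when bounding $\AA\nud h_\th$ on the small ball~$S$: one has to track carefully how factors like $\th_1\JmaxS$, $\th_1\sqrt{d}$ and $h_\th(X)$ scale with $d$, exploiting that $d\th_1$, $d^{-1}\JmaxS$ and $\Sp'(\s^2/\Lbar)$ all belong to $\KK$. Once this uniform bound is in place, everything else is routine exponential-tail manipulation driven by the $\smh^{-1}(\h)$ threshold.
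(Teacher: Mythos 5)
Your proposal is correct and takes essentially the same route as the paper: the paper packages your ``Dynkin plus crude bound on the small ball'' step as the supermartingale $M_\th(t)=h_\th(X\nud(t))-H_\th\int_0^t\bone\{\nS{X\nud(s)-nc}\le K_{\ref{ML-drift-lemma-1}}\sqrt{nd}\}\,ds$ with $H_\th\le n\L K_{\ref{ADB-lema3}}$, then applies optional stopping at $t\wedge\t\nud(\h)$ and the same exponential Chebyshev and $\smh^{-1}(\h)$ manipulations for the second claim. The only cosmetic difference lies in the constants: the paper bounds $H_\th$ by simply majorizing $h_\th(X+J)$ rather than via $|e^y-1|\le 2|y|$, and it sets $n_{\ref{ADB-lema3}}(T)=\max\{K_{\ref{ADB-lema3}}\Lbar T,\,n_{\ref{ML-drift-lemma-1}}\}$ — your choice of threshold should likewise be capped below by $n_{\ref{ML-drift-lemma-1}}$ so that the first display is applicable.
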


\nin {\it Proof.}\     
 It follows from the second part of Lemma~\ref{ML-drift-lemma-1} that, for $0 \le \th \le \th_1$,
\[
 M_\th(t) \Def h_\th(X\nud(t)) - H_\th \int_0^t \bone\{\nS{X\nud(s)-nc} \le K_{\ref{ML-drift-lemma-1}}\sqrt{nd}\}\,ds
\]
is an \adbb{$\FF^{X\nud}$-}supermartingale, where
\[
    H_\th \Def \max_{X\in\Z^d\colon \nS{X-nc} \le K_{\ref{ML-drift-lemma-1}}\sqrt{nd}} \AA\nud h_\th(X).
\]
Clearly, recalling $n \ge d^4$, $H_\th$ is bounded by
\eq\label{ADB-H-th-bound}
    n\sJJ \|g^J\|_{\d_0}\exp\{n^{-1}\th[K_{\ref{ML-drift-lemma-1}}\sqrt{nd} + \JmaxS]^2\} \Le n\L K_{\ref{ADB-lema3}},
\en
for 
\eq\label{Dec-K2.4-def}
   K_{\ref{ADB-lema3}} \Def L_0 \exp\{\th_1[K_{\ref{ML-drift-lemma-1}} + d^{-1}\JmaxS]^2\} \in \KK.
\en  
By the optional stopping theorem,
applied to $M_\th(\min\{t,\t\nud(\h)\})$, it thus follows that
\[
    e^{n\th\h^2}\pr[\t\nud(\h) \le t \giv X\nud(0) = X_0] - n\L K_{\ref{ADB-lema3}} t \Le \exp\{n^{-1}\th\nS{X_0-nc}^2\},
\]
proving the first claim.  The second follows for $n \ge \max\{n_{\ref{ADB-lema3}}(T),\smh^{-1}(\h)\}$, where
\eq\label{Dec-n2.4(T)-def}
   n_{\ref{ADB-lema3}}(T) \Def \max\{K_{\ref{ADB-lema3}}\Lbar T, n_{\ref{ML-drift-lemma-1}}\}, 
\en
since, for such choices of~$n$, 
\[
   nK_{\ref{ADB-lema3}}\L T \Le n^{9/4} \Le n^4 \Le e^{n\th_1\h^2/4}, 
                \quad\mbox{and thus}\quad e^{-5n\th_1\h^2/16} \Le n^{-4}.   \quad\qed
\] 

\section{The analysis of $X\nud$: elementary processes}\label{ADB-special}
\setcounter{equation}{0}
In this section, we conduct a more detailed analysis of the Markov jump processes~$X\nud$. 
\adbr{The results that follow are used to bound the solution
to the Stein equation~\Ref{ADB-Stein-eqn} and its differences, using the representation 
given in~\Ref{ADB-Poisson-eq}; this is an essential step in proving our
approximation theorem.  In order to find  \MPP es that yield a given pair $A,\s^2$, we only
need to consider ones 
whose transition rates satisfy more restrictive conditions than Assumptions G0--G4; we refer to them
as {\it elementary\/} (sequences of) processes. Since this simplifies some of the coming arguments, we 
conduct them within the
context of elementary processes, though analogous results hold under the previous assumptions: 
see Remark~\ref{ADB-J-jumps-in-Th3.1}. 
We retain Assumptions G0 and~G1, replacing the remainder with the Assumptions S2--S4 below.}

\medskip
\begin{description}
\item[Assumption S2.]  
The set $\JJ$ contains the vectors $\{\pm \ej,\, 1\le j\le d\}$.

\item[Assumption S3.] 
The transition rates~$g^J(x)$ are constant in $B_{\d_0}(c)$, for all $J \in \JJ \setminus 
       \{\ej,\,1\le j\le d\}$.

\item[Assumption S4.] 
For $1\le j\le d$,
$g^{\ej}(x)$ is linear and satisfies $g^{\ej}(x) \ge \half g^{\ej}(c)$ in $x \in B_{\d_0}(c)$.
\end{description}

\adbr{
Defining $I\uj := \{i\colon 1\le i\le d, A_{ij} \neq 0\}$, $1\le j\le d$, we write 
\eq\label{ADB-new-gdef}
 g\uj \Def g^{-\ej}(c),\quad G\uj \Def \sum_{i \in I\uj} g^{\eii}(c), 
            \quad g_* \Def \min_{1\le j\le d}(g\uj\wedge G\uj),
\en
observing that $G\uj \le \L$, $1\le j\le d$.}
We retain the definitions~\Ref{ADB-L-defs},
noting that, \adbr{for elementary processes,} $L_2 = 0$ and that $L_0 \le 3/2$, and that~$\e_0$ as 
defined in Assumption~G3 can be taken to be~$1/2$. As observed in Remark~\ref{ADB-delta1=delta0}, 
since $L_2=0$, we have
\[
    \min\{\d_{\ref{ML-drift-lemma-1}},\d'_{\ref{ML-drift-lemma-1}}(d)\}
            \Eq \d_{\ref{ML-drift-lemma-1}} \Eq \d_0/\sqrt{\lmax(\S)}
\]
for the upper bound on~$\d$ in Lemma~\ref{ML-drift-lemma-1}. \adbr{We also define
\eq\label{Oct-n2.7-def}
     n_{\Ref{Oct-n2.7-def}} \Def \max\Blb (5(d^{-1}\JmaxS)\max\{1,\sqrt{d\th_1}\})^{8/3}, 
         n_{\ref{ADB-lema3}}(1/g_*) \Brb \ \in\ \KKA. 
\en
After} some work, it follows from the definitions of $\smh$ and~$n_{\Ref{Oct-n2.7-def}}$,
and because $d^4 \le n$, that $n \ge \max\{n_{\Ref{Oct-n2.7-def}},\smh^{-1}(\d)\}$
implies that
\eq\label{Dec-n2.7-implies}
  \d\ \ge\ 20n^{-3/4}(d^{-1}\JmaxS) \ \ge\ 20\JmaxS/n;
\en
these inequalities are used later.

\subsection{Any $c$, $A$ and~$\s^2$ can be associated with an elementary process}\label{ADB-MPP-Tropp}
\adbr{
In this section, we relate the generator~$\ABA_n$, defined using an arbitrary choice of $c$, $A$ and~$\s^2$,
to the generator~$\AA\nud$ of an elementary process.  
The main difficulty is to match~$\s^2$, overcome by using Tropp~(2015, Theorem~1.1).
}

\begin{lemma}\label{ADB-innovations-MPP}
Let $\s^2$ be any $d\times d$ covariance matrix with positive eigenvalues $\l_1\ge\l_2\ge\ldots\ge\l_d>0$.
Then~$\s^2$ can be represented in the form
\[
    \s^2 \Eq \sJJ \tg(J) JJ^T,
\]
for a finite set~$\JJ \in \Z^d$ such that $\eii \in \JJ$, $1\le i\le d$,
such that $J \in \JJ$ implies that $-J \in \JJ$, with $\tg(-J) = \tg(J)$,
and such that 
$$
   \max_{J \in \JJ}\max_{1\le i\le d}|J_i| \Le 1 + \half\sqrt{2(d-1)\Rh(\s^2)}.
$$
Furthermore,
 $\tg(\eii) \ge \quarter\l_d$ for each~$1\le i\le d$. 
\end{lemma}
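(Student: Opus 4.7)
The plan is to split $\s^2$ into a multiple of the identity that will absorb the required lower bound on $\tg(\eii)$, and a positive semidefinite remainder to which I would apply Tropp's theorem. Specifically, I would write $\s^2 = \tfrac{\l_d}{2}I + M$ with $M := \s^2 - \tfrac{\l_d}{2}I$, so that $M$ is positive semidefinite with eigenvalues in $[\l_d/2,\,\l_1 - \l_d/2]$. To reproduce the $\tfrac{\l_d}{2}I$ piece, I would set $\tg(\pm\eii) := \l_d/4$ for $1 \le i \le d$, since $\sum_{i=1}^d (\l_d/4)[\eii(\eii)^T + (-\eii)(-\eii)^T] = \tfrac{\l_d}{2}I$; this choice already delivers the required lower bound $\tg(\eii) \ge \l_d/4$.

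Next, for the remainder $M$, I would invoke Tropp~(2015, Theorem~1.1) to obtain a finite family of integer vectors $v_1,\ldots,v_r \in \Z^d$ and positive weights $w_1,\ldots,w_r$ with $M = \sum_{k=1}^r w_k v_k v_k^T$ and $\max_{k,i}|(v_k)_i| \le \half\sqrt{2(d-1)\Rh(\s^2)}$. To enforce the symmetry of $\JJ$ and $\tg$, I would take $\JJ := \{\pm\eii\colon 1 \le i \le d\} \cup \{\pm v_k\colon 1 \le k \le r\}$ and define $\tg(\pm v_k) := w_k/2$; if any $v_k$ coincides with some $\pm\eii$, its weight would simply be merged with the existing $\l_d/4$, which only strengthens the lower bound. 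The identity $(w_k/2)[v_k v_k^T + (-v_k)(-v_k)^T] = w_k v_k v_k^T$ confirms that the symmetrization reproduces $M$ exactly.

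The $\ell_\infty$ bound would then follow because every $J \in \JJ$ either has $|J_i| \le 1$ (the $\pm\eii$) or equals some $\pm v_k$, for which $|(v_k)_i| \le \half\sqrt{2(d-1)\Rh(\s^2)}$; the maximum is therefore dominated by $1 + \half\sqrt{2(d-1)\Rh(\s^2)}$. Positivity of all weights and the representation of $\s^2$ as the declared sum are immediate from the construction.

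The main obstacle is calibrating Tropp's theorem so that it yields exactly the stated $\ell_\infty$ bound $\half\sqrt{2(d-1)\Rh(\s^2)}$: the shifted matrix $M$ has condition number $2\Rh(\s^2)-1$ rather than $\Rh(\s^2)$, so the appeal must either be to a scale-invariant form of the theorem, or must carefully absorb the factor $\sqrt{2}$ arising from the shift. A secondary, routine bookkeeping point is that any repeated vectors produced by Tropp's decomposition should be merged, with their weights summed, so that $\tg$ is a well-defined function on the finite set~$\JJ$.
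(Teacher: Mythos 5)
Your proposal is correct and takes essentially the same route as the paper: the paper likewise writes $\s^2 = \half\l_d I + (\s^2 - \half\l_d I)$, applies Tropp (2015, Theorem~1.1) to the shifted matrix, symmetrizes the resulting integer vectors (halving the weights between $J$ and $-J$), and adds the vectors $\pm\eii$ with weight $\l_d/4$. The calibration worry you raise at the end is not a genuine obstacle: since $\Rh(\s^2 - \half\l_d I) = 2\Rh(\s^2) - 1 \le 2\Rh(\s^2)$, Tropp's coordinate bound for the shifted matrix is at most $1 + \half\sqrt{2(d-1)\Rh(\s^2)}$, which is precisely how the paper arrives at the stated bound.
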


\begin{proof}
Write $\l_0 := \half\l_d = \half\lmin(\s^2)$, so that $\s^2 - \l_0 I$ is positive definite,
and has condition number $\Rh(\s^2 - \l_0 I) \le 2\Rh(\s^2)$.
By Theorem~1.1 of Tropp~(2015),  we can write
\[
    \s^2 - \l_0 I \Eq \sum_{J \in \JJ_1} \g(J) JJ^T,  
\]
where the set~$\JJ_1$ is finite, $\g(J) > 0$ for each~$J\in\JJ_1$, and the vectors~$J$ have integer coordinates
with $|J_i| \le 1 + \half\sqrt{(d-1)\Rh(\s^2 - \l_0 I)}$.  Note that the same covariance matrix is obtained
if $\g(J)JJ^T$ is replaced by $\half\g(J)\{JJ^T + (-J)(-J)^T\}$, which we do, expanding the set~$\JJ_1$ if necessary.
Writing
$\l_0 I = \sum_{i=1}^d \half\l_0 \{\eii (\eii)^T + (-\eii) (-\eii)^T\}$, and taking 
$\JJ = \JJ_1 \cup \{\pm\eii,\,1\le i\le d\}$, the lemma follows.
\end{proof}

Fitting~$A$ and~$c$ as well, in such a way that Assumptions G0--G1 and S2--S4 are all satisfied, is now easy. 

\begin{theorem}\label{ADB-MPP-A-too}
 For any $c\in\re^d$, $A$ whose eigenvalues all have negative real parts, and positive definite~$\s^2$,
there exists a sequence of elementary processes having $F(c)=0$,
$DF(c)=A$ and $\s^2$ given by~\Ref{ADB-sigma2-def}.  For these processes, defining $\d_0 := \lmin(\s^2)/(8\|A\|)$
and $\Lbar := \lbar(\s^2)$,
we have~$\e_0 \ge 1/2$ in Assumption~S4, and the quantities in~$\KKA$ are all bounded by 
continuous functions of~$\|A\|/\Lbar$ and the elements of 
$\Sp'(\s^2/\Lbar)$ and~$\Sp'(\S)$.
\end{theorem}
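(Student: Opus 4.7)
The plan is to construct the elementary process explicitly: use Lemma~\ref{ADB-innovations-MPP} to realize~$\s^2$ via constant rates on a symmetric jump set, and then promote the $\ej$-rates to linear functions whose gradients at~$c$ encode~$A$. All the freedom for matching an arbitrary~$A$ must come from Assumption~S4, since constant rates contribute nothing to $DF(c)$.

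Concretely, Lemma~\ref{ADB-innovations-MPP} supplies a finite symmetric set $\JJ \subset \Z^d$ with $\pm\ej \in \JJ$ and weights $\tg(J) = \tg(-J) > 0$ satisfying $\s^2 = \sJJ \tg(J)JJ^T$, $\tg(\pm\ej) \ge \lmin(\s^2)/4$, and $\max_{J,i}|J_i| \le 1 + \half\sqrt{2(d-1)\Rh(\s^2)}$. I would then define $g^J(x) \equiv \tg(J)$ for every $J \in \JJ \setminus \{\ej:1\le j\le d\}$ (so in particular $g^{-\ej}$ is constant, as S3 demands), and set $g^{\ej}(x) := \tg(\ej) + A_{j,\cdot}(x-c)$, with $A_{j,\cdot}$ the $j$-th row of~$A$. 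The symmetry $\tg(J) = \tg(-J)$ yields $F(c) = \sJJ J \tg(J) = 0$; because only the $g^{\ej}$ have nonzero gradient at~$c$, $(DF(c))_{jk} = \partial_k g^{\ej}(c) = A_{jk}$, so $DF(c) = A$; and $\sJJ JJ^T g^J(c) = \s^2$ automatically. Taking $\d_0 := \lmin(\s^2)/(8\|A\|)$ and using $\|A_{j,\cdot}\| \le \|A\|$, for $x\in B_{\d_0}(c)$ one has $g^{\ej}(x) \ge \tg(\ej) - \lmin(\s^2)/8 \ge \tg(\ej)/2$, verifying S4 with $\e_0 = 1/2$; the remaining assumptions G0--G2 and S2 are immediate by construction.

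The final task is to bound each element of~$\KKA$ by a continuous function of $\|A\|/\Lbar$, $\Sp'(\s^2/\Lbar)$ and $\Sp'(\S)$. Here $L_2 = 0$; $L_0 \le 3/2$ from $g^{\ej}(x) \le \tg(\ej) + \lmin(\s^2)/8 \le \tfrac32 \tg(\ej)$; $L_1 \le 4(\|A\|/\Lbar)/\lmin(\s^2/\Lbar)$; the bound $d^{-1}\Jmax \le d^{-1/2}(1 + \half\sqrt{2(d-1)\Rh(\s^2)})$ from Lemma~\ref{ADB-innovations-MPP} is a continuous function of $\Rh(\s^2/\Lbar)$; and $\d_0$ itself equals $\lmin(\s^2/\Lbar)/(8\|A\|/\Lbar)$. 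I expect the main obstacle to be $\Lbar/\m_* = 2\Lbar/\min_J \tg(J)$: the bound $\tg(\pm\ej) \ge \lmin(\s^2)/4$ handles the coordinate directions, but the auxiliary Tropp weights need not individually be comparable to~$\Lbar$. Resolving this will require either sharpening Lemma~\ref{ADB-innovations-MPP} via a quantitative form of Tropp's theorem, or post-processing the decomposition by absorbing small-weight directions into the baseline $\l_0 I$ term appearing in its proof, so as to guarantee $\min_J \tg(J) \ge c\Lbar$ for some~$c$ depending only on $\Sp'(\s^2/\Lbar)$.
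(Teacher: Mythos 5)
Your construction is exactly the paper's: represent $\s^2$ via Lemma~\ref{ADB-innovations-MPP}, keep all rates constant except $g^{\eii}(x) := \tg(\eii) + (A(x-c))_i$, and verify $F(c)=0$, $DF(c)=A$, $\e_0 = 1/2$, $L_0 \le 3/2$, $L_1 \le 4\|A\|/\lmin(\s^2)$, the $d^{-1}\Jmax$ bound and the choice of $\d_0$, all as in the paper. The one place you diverge is your final paragraph: the obstacle you anticipate with $\Lbar/\m_*$ is not one the paper confronts, and you do not need to sharpen Tropp's theorem or post-process the decomposition. The paper's proof bounds only $\Lbar/g_*$, where $g_* = \min_j(g\uj\wedge G\uj)$ involves solely the coordinate-direction rates, and these are controlled exactly as you did, via $\tg(\pm\eii) \ge \quarter\lmin(\s^2)$ (note $G\uj>0$ because no column of $A$ can vanish when all eigenvalues have negative real parts). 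In the elementary-process analysis that follows (Theorems \ref{ADB-initial-displacement}, \ref{ADB-initial-displacement-2}, \ref{ML-TV}, \ref{ADB-h-bounds}), lower bounds on rates enter only through $g\uj$, $G\uj$ and $g_*$; the auxiliary Tropp weights are never required to be comparable to $\Lbar$, since the corresponding jumps have constant rates and only contribute through $L_0$, $\L$ and $\Jmax$. The entry $\Lbar/\m_*$ in the generic list $\KKA$ is relevant only for the general-process arguments (e.g.\ Lemma~\ref{ADB-thdtvpi}), where Assumption G3 is hypothesised rather than constructed. So, deleting your last paragraph, your argument coincides with, and is as complete as, the paper's proof.
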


\begin{proof}
Represent~$\s^2$ as in Lemma~\ref{ADB-innovations-MPP}. For $J\in\JJ$, define
\[
   g^J(x) \Def \begin{cases}
                  \tg(J), &\mbox{if}\ J \in \JJ \setminus \{\eii,\,1\le i\le d\};\\
                  \tg(J) + (A(x-c))_i,  &\mbox{for}\  \adbb{J = \eii,}\ 1\le i\le d.
               \end{cases}
\]
With these functions~$g^J$, we have $\s^2 \Eq \sJJ g^J(c) JJ^T$ and,
writing $F(x) := \sJJ Jg^J(x)$, we also have $F(c)=0$ and $DF(c) = A$;
\adbr{define $\gbars := d^{-3/2} \sJJ g^J(c) |J|^3$.}

Now all the transition rates~$g^J(x)$ are constant in~$x$, except 
for $J = \eii$, $1\le i\le d$, when they are linear. For $g^{\eii}$, we have
\[
    \frac{g^{\eii}(x)}{g^{\eii}(c)} \Eq \adbb{\frac{\tg(\eii) + (A(x-c))_i}{\tg(\eii)}},
\]
and this is at least~$1/2$ if 
$$
   |x-c|\,\|A\| \Le \tfrac1{8}\lmin(\s^2)  \Le \adbb{\half}\tg(\eii),
$$
which is in turn true if $|x-c| \le \d_0$, so that we can take $\e_0 = 1/2$.
The same calculation shows that $L_0 \le 3/2$, and it is also immediate, from Lemma~\ref{ADB-innovations-MPP}, that 
\eqa
    L_1 &\le& 2\|A\|/\min_{1\le i\le d}\tg(\eii) \Le 4\|A\|/\lmin(\s^2); \non\\  
    \Lbar/g_* &\le& \lbar(\s^2)/\min_{1\le i\le d}\tg(\eii) \Le 4\Rh(\s^2/\lbar(\s^2)); \label{Sept-L1-este}\\
    d^{-1/2}\gbars/\Lbar &\le& \sqrt{1 + \Rh(\s^2/\lbar(\s^2))}.
\ena
Finally, again from Lemma~\ref{ADB-innovations-MPP}, 
\eq\label{Sept-Jmax-este}
   d^{-1}\Jmax \Le d^{-1}\bigl\{d\bigl(1 + \half\sqrt{2(d-1)\Rh(\s^2)}\bigr)^2\bigr\}^{1/2} 
           \Le \adbb{1 + \sqrt{\half\Rh(\s^2/\lbar(\s^2))}}.
\en
Hence, for this choice of~$\d_0$, the quantities in~$\KKA$ are all bounded by 
continuous functions of~$\|A\|/\Lbar$ and the elements of $\Sp'(\s^2/\Lbar)$ 
and~$\Sp'(\S)$.
\end{proof}

\subsection{The dependence of $\law(X\nud(U))$ on $X\nud(0)$}\label{ADB-initial-conditions}
We first show that the distribution 
$\law(X\nud(U) \giv X\nud(0) = X)$ does not change too much if the initial condition
is slightly altered.  The argument is based on 
that for one-dimensional processes given in Socoll \& Barbour~(2010).
We begin by bounding differences of the form
\[
    \ex \{f(X\nud(U)) \giv X\nud(0) = X - \ej\} - \ex \{f(X\nud(U)) \giv X\nud(0) = X\},
\]
and then prove a sharper bound on second differences.

\begin{theorem}\label{ADB-initial-displacement}
Let~$X_n$ be a sequence of elementary processes.
Fix any $\d < \d_{\ref{ML-drift-lemma-1}}$.  Then there are constants 
$K^j_{\ref{ADB-initial-displacement}}$, $1\le j\le d$, in~$\KKA$, such that, 
for all $n \ge \max\{n_{\Ref{Oct-n2.7-def}},\smh^{-1}(\d)\}$ as \adbb{in~\Ref{Oct-n2.7-def}},
\eqa
    \lefteqn{\sup_{f\colon \|f\|_\infty = 1}
        |\ex \{f(X\nud(U)) \giv \adbn{X\nud}(0) = X-\ej\} - \ex \{f(X\nud(U)) \giv X\nud(0) = X\}| } \non \\
          &&\quad\Le K^j_{\ref{ADB-initial-displacement}} n^{-1/2} \Bl \frac{\adbr{G\uj}}{g\uj}\Br^{1/4} 
        \max\Blb 1, \frac1{(g\uj\adbr{G\uj})^{1/4}\sqrt U} \Brb , 
       \phantom{XXXXXX} \label{ADB-f-diff}
\ena
uniformly for all $U > 0$ and $\nS{X - nc} \le n\d/2$.
\end{theorem}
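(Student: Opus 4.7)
The plan is to construct an explicit coupling of two copies $X\ui$ and $X\ut$ of $X\nud$ with $X\ui(0) = X - \ej$ and $X\ut(0) = X$, and to bound the supremum in~\Ref{ADB-f-diff} by twice the coupling probability $\pr[X\ui(U) \ne X\ut(U)]$. The coupling is built in the spirit of Socoll \& Barbour~(2010): set $D(t) := X\ut(t) - X\ui(t)$, so that $D(0) = \ej$, and identify the two copies once $D$ first reaches $0$.

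Exploiting the elementary structure is the first step. By Assumption~S3, every $J \in \JJ$ with $J \ne \eii$ for any~$i$ carries a rate constant on $B_{\d_0}(c)$, so such jumps can be coupled by shared Poisson clocks: they fire simultaneously in both copies and leave $D$ unchanged. The $d$ state-dependent birth jumps~$+\eii$, whose rates are linear by Assumption~S4, are coupled via the standard greatest-common-part construction, with an exclusive clock for each~$i$ at rate $|n g^{\eii}(n^{-1}X\ui) - n g^{\eii}(n^{-1}X\ut)| \le |A_{ij}|$ while $D = \ej$. Decoupling events therefore accumulate at total rate bounded by $\sum_{i\in I\uj}|A_{ij}|$, comparable to~$G\uj$, and each such event changes $D$ by a single $\pm\eii$.

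The key mechanism for re-coupling is the constant-rate $-\ej$ death clock, firing at rate $ng\uj$ in both copies. A labelled-individuals construction in the $j$th coordinate declares that each firing removes a uniformly chosen individual, so the single extra individual in $X\ut_j$ (when $D = \ej$) is selected with probability $\sim 1/(nc_j)$ per firing; the $\Po(n g\uj U)$ firings on $[0,U]$ then yield an aggregate re-coupling probability of order $g\uj U/c_j$. The main obstacle is analysing the balance of these competing effects: while $D \ne 0$, birth-type decoupling clocks act at total rate $O(G\uj)$, whereas death-clock re-coupling acts at rate $O(g\uj/n)$. I would study $D(\cdot)$ as a Markov chain on $\Z^d$ absorbed at the origin, bounding its survival probability by comparison with an auxiliary birth--death chain on~$\Z_+$; the resulting quasi-stationary balance produces the factor $n^{-1/2}(G\uj/g\uj)^{1/4}$ for $U$ of moderate size, while for smaller~$U$ a local central limit estimate on the $j$th marginal, whose $\pm\ej$-driven variance grows like~$ng\uj U$, contributes the additional factor $((g\uj G\uj)^{1/4}\sqrt U)^{-1}$. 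Throughout, Lemma~\ref{ADB-lema3} ensures both copies remain in $\tB_{n,\d}(c)$ on $[0,U]$ with probability $1-O(n^{-4})$, so the truncation~\Ref{ADB-transition-rates-delta} does not obstruct the coupling.
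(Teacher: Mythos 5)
There is a genuine gap, and it lies exactly where your sketch becomes vague: the re-coupling mechanism. For elementary processes, Assumption~S3 makes the $-\ej$ rate \emph{constant}, equal to $ng\uj$, in both copies. A shared $-\ej$ clock therefore decrements the $j$-th coordinate of both copies simultaneously and leaves $D(t)=X\ut(t)-X\ui(t)$ unchanged; and since the two copies have identical $-\ej$ rates, there is no excess death rate that could fire in one copy only. Your labelled-individuals device, in which each firing removes a uniformly chosen individual so that the ``extra'' individual is hit with probability of order $1/(nc_j)$, implicitly assumes a \emph{per capita} (linear) death rate, as in the immigration--death couplings behind translated Poisson approximation; it does not apply here, and even taken at face value it would only relabel which individual dies while both copies still lose one unit in coordinate~$j$, so $D$ is not reduced. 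The only exclusive clocks in your construction are the birth-rate discrepancies of size $|A_{ij}|$, and these move $D$ by $\pm\eii$, not towards $0$ in coordinate~$j$; as described, the coupled pair has no mechanism that merges the initial $\ej$ discrepancy, and the subsequent ``quasi-stationary balance'' and local-CLT steps that are supposed to produce the factors $n^{-1/2}(G\uj/g\uj)^{1/4}$ and $((g\uj G\uj)^{1/4}\sqrt U)^{-1}$ are asserted rather than derived (note also that your per-firing estimate aggregates to $g\uj U/c_j$, which is not of the right order).

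The paper's proof obtains the smoothing distributionally rather than by a merge-time argument, and this is the idea your proposal is missing. One realizes the chain as $X\nud(u)=X_0-\ej N\nud(u)+W\nud(u)$, where $N\nud$ counts the constant-rate $-\ej$ jumps and is Poisson of rate $ng\uj$ up to the exit time controlled by Lemma~\ref{ADB-lema3}; shifting the initial state by $-\ej$ is then almost the same as shifting the Poisson count by one, and the bound $\sum_l|\Po(\l)\{l\}-\Po(\l)\{l-1\}|\le\l^{-1/2}$ yields the term $\{ng\uj U\}^{-1/2}$. The residual dependence of the conditional law of $W\nud$ on the initial shift enters only through the $d$ linear birth rates $g^{\eii}$, $i\in I\uj$, and is controlled by a Radon--Nikodym martingale whose jumps are $O(L_1/n)$ and whose expected quadratic variation is $O(n^{-1}G\uj U)$; Kolmogorov's inequality then gives a contribution of order $n^{-1/2}(G\uj/g\uj)^{1/4}$ after choosing $U\le U\uj=1/\sqrt{G\uj g\uj}$ and using monotonicity of the left-hand side of~\Ref{ADB-f-diff} in $U$. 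If you want a genuinely coupling-based proof, you would need to couple the two copies so that one performs $l$ and the other $l-1$ of the $-\ej$ jumps (a pathwise version of the change-of-measure argument), not a uniform-removal death coupling, which is unavailable for constant rates.
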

 
\begin{proof}
For any $x \in \ell_1$ and any stochastic matrix~$P$, we have $\nl{x^T P} \le \nl{x}$.
Hence the quantity being bounded in~\Ref{ADB-f-diff} is non-increasing in~$U$. We can thus 
\adbr{take $U \le U\uj := 1/\sqrt{G\uj g\uj}$ in what follows, and use the bound obtained for $U = U\uj$
as a bound for all larger values of~$U$.  Note that $U\uj \le 1/g_*$.}

We begin by realizing the chain~$X\nud$ with $X\nud(0) = X_0$ 
in the form $X\nud(u) := X_0 - \ej N\nud(u) + W\nud(u)$, where 
the bivariate chain $(N\nud,W\nud)$ with state space $\Z_+ \times \Z^d$ starts at $(0,0)$, 
and, at times~$u$ such that $\nS{X\nud(u) - nc} \le n\d - \JmaxS$, has transition rates
given by
\eq\label{ADB-bivariate-process-2}
\begin{array}{ll}
 (l,W) \rightarrow (l+1,W) & \mbox{at rate}\ ng\uj ;\\
 (l,W) \rightarrow (l,W+J) & \mbox{at rate}\ n g^{J}\bigl( (X_0-l\ej+W)/n \bigr),
     \quad  J \neq -\ej \in \JJ;
\end{array}
\en
note that the first of these transitions {\it reduces\/} the $j$-coordinate of~$X\nud$ by~$1$.
At other values of~$X$, it may be that $g_\d^J(n^{-1}X)$ does not agree with~$g^J(n^{-1}X)$,
and so the transition rates of $(N\nud,W\nud)$ may be different from those given in~\Ref{ADB-bivariate-process-2}.
For this reason, if the time interval $[0,U]$ is of interest, we treat any paths of~$X\nud$ for which 
$\sup_{0\le u\le U}\nS{X\nud(u) - nc} > n\d - 3\JmaxS$ separately; the factor~$3$ ensures that shifting 
a path by a vector $J' + J''$, for any $J',J'' \in \JJ$,
still leaves it entirely within $\{X\colon \nS{X - nc} \le n\d - \Jmax\}$ over $[0,U]$.

Using the bivariate process, we deduce that
\eqa
  \lefteqn{d_{TV}\{\law_{X_0}(X\nud(U)),\law_{X_0-\ej}(X\nud(U))\}}\non\\
  &=& \frac12\sum_{X \in \Z^d}|\pr_{X_0}[X\nud(U)=X + X_0] 
 -\pr_{X_0-\ej}[X\nud(U)=X+X_0]| \non\\
  &=& \frac{1}{2}\sum_{X \in \Z^d}\Bigl|\sum_{l\geq 0}\pr_{X_0}[N\nud(U)=l]
    \pr_{X_0}[W\nud(U)=X+l\ej \giv N\nud(U)=l]  \non\\
  &&\ \mbox{} - \sum_{l\geq 1}\pr_{X_0}[N\nud(U)=l-1]
    \pr_{X_0-\ej}[W\nud(U)=X+l\ej \giv N\nud(U)=l-1] \Bigr| \non\\
  &\le& \!\! \frac{1}{2}\sum_{X \in \Z^d}\sum_{l \geq 0}
       |\pr_{X_0}[N\nud(U)=l]-\pr_{X_0}[N\nud(U)=l-1]|q^{U}_{l-1,X_0-\ej}(X+l\ej) \non\\
  && \ \mbox{}+\frac{1}{2}\sum_{X \in \Z^d}\sum_{l \geq 1}\pr_{X_0}[N\nud(U)=l]
      |q^{U}_{l,X_0}(X+l\ej)-q^{U}_{l-1,X_0-\ej}(X+l\ej)|, \non \\ \label{ADB-Poisson-part-2}
\end{eqnarray}
where 
\begin{equation}\label{ADB-dens1-S}
    q^{U}_{l,X}(W) \Def \pr[W\nud(U)=W \giv N\nud(U)=l, X\nud(0)=X]. 
\end{equation}
Now, from \BHJ~(1992, Proposition A.2.7), 
\begin{equation}\label{ADB-BHJ-S}
  \sum_{l \geq 0}|\Po(\l)\{l\}-\Po(\l)\{l-1\}| \Eq 2\max_{l \ge 0} \Po(\l)\{l\}
   \ \leq\ \frac{1}{\sqrt{\l}}. 
\end{equation} 
Hence, since $N\nud$ is a Poisson process of rate~$n \adbr{g\uj}$ until the time
\eq\label{tau-hat-def-S}
   \hht\nud \Def \t\nud(\d - 3n^{-1}\JmaxS),
\en
where~$\t\nud(\h)$ is as defined in~\Ref{ADB-tau-delta-def},
it follows that the first term in~\Ref{ADB-Poisson-part-2} is bounded by 
\eq\label{ADB-first-part-bnd-S}
    \pr_{X_0}[\hht\nud \le U] + \adbr{\half}\{n g\uj U\}^{-1/2}.
\en

Recall that $n \ge \max\{n_{\Ref{Oct-n2.7-def}},\smh^{-1}(\d)\}$, so that,
from~\Ref{Dec-n2.7-implies}, $\d - 3n^{-1}\JmaxS > 3\d/4$. 
Hence, for any $U \le \adbr{U\uj} \le 1/g_*$, we can use
Lemma~\ref{ADB-lema3} and the definition of~$\hht\nud$ to give 
\eq\label{ADB-tau-hat-prob-S}
    \pr_X[\hht\nud \le U] \Le 
    \pr_X[\t\nud(3\d/4) \le \adbr{U\uj}]
        \Le 2n^{-4},
\en
uniformly in $\nS{X - nc} \le n\d/2$.
Putting this into~\Ref{ADB-first-part-bnd-S}, \adbr{for $U \le U\uj$}, gives a contribution
to~$d_{TV}\{\law_{X_0}(X\nud(U)),\law_{X_0-\ej}(X\nud(U))\}$ from the first part of~\Ref{ADB-Poisson-part-2} of at 
most 
\eq\label{ADB-prob-este-part-1}
    \adbr{2}n^{-4} +  \half\{n g\uj U\}^{-1/2}.
\en
It thus remains only to control the differences between the conditional probabilities
$q^{U}_{l,X}(W)$ and~$q^{U}_{l-1,X-\ej}(W)$.

To make the comparison between $q^{U}_{l,X}(W)$ and~$q^{U}_{l-1,X-\ej}(W)$ for $l\ge1$, we first
condition on the whole paths of~$N\nud$ leading to the events $\{N\nud(U)=l\}$
and $\{N\nud(U)=l-1\}$, respectively, chosen to be suitably matched; we write
\eqa
  q^{U}_{l,X}(W) &=&  \frac 1{U^l} \int_{[0,U]^l}ds_1\,\ldots\,ds_{l-1}\,ds^* \non\\ 
  &&\pr_X[W\nud(U)=W \giv (N\nud)^U=\n_l(\cdot\;;s_1,\ldots,s_{l-1}, s^*)];\non\\
 q^{U}_{l-1,X-\ej}(W) &=& \frac 1 {U^l}\int_{[0,U]^{l}}ds_1...ds_{l-1}ds^*  \label{ADB-dens22-S} \\
 &&   \pr_{X-\ej}[W\nud(U)=W \giv (N\nud)^U=\n_{l-1}(\cdot\;;s_1,\ldots,s_{l-1})] ,
   \non
\ena
where
\eq\label{ADB-nu-def-S}
   \n_r(u;t_1, \ldots, t_{r})\ :=\ \sum_{i=1}^{r} \charI_{[0,u]}(t_i),
\en
and, for a function~$Y$ on~$\re_+$, $Y^u$ is used to denote $(Y(s),\,0\le s\le u)$.  
Fixing $\bs_{l-1} := (s_1,s_2,\ldots,s_{l-1})$, 
let $\bPstar$ denote the distribution of~$(W\nud)^U$, conditional on 
$(N\nud)^U = \n_l(\cdot\;;\bs_{l-1}, s^*)$ and  $X\nud(0)=X$, and let~$\bP^U_{\bs_{l-1},X}$
denote the distribution conditional on 
$(N\nud)^U = \n_{l-1}(\cdot\;;\bs_{l-1})$ and  $X\nud(0)=X$.
Write $\hrstar(u,w^u)$ to denote the Radon--Nikodym
derivative $d\bP^U_{\bs_{l-1},X-\ej}/d\bPstar$  evaluated at the path $w^u$, for any $0\le u\le U$.
Then
$$
    \bPstar[W\nud(U)=W]  \Eq \int_{\{w^U\colon w(U)=W\}} \hrstar(U,w^U)\,d\bPstar(w^U),
$$
and hence
\eqa
  \lefteqn{\bPstar[W\nud(U)=W] - \bP^U_{\bs_{l-1},X-\ej}[W\nud(U)=W] } \non\\ 
   && \Eq \int \charI_{\{W\}}(w(U))\{1 - \hrstar(U,w^U)\}\,d\bPstar(w^U).\phantom{XXXX}\label{ADB-pm-diff-S}
\ena
Thus
\eqa
  \lefteqn{\sum_{W\in\Z^d}|q^{U}_{l,X}(W) - q^{U}_{l-1,X-\ej}(W)|}\non\\
   &\le& \frac 1{U^l} \int_{[0,U]^l}ds_1\,\ldots\,ds_{l-1}\,ds^* \non\\
  && \sum_{W\in\Z^d}
      \ex^U_{(\bs_{l-1},s_*),X}\left\{|\hrstar(U,(W\nud)^U)-1|\charI_{\{W\}}(W\nud(U))\right\}\label{ADB-RN-bound-S} \\
   &\le&  \frac 2{U^l} \int_{[0,U]^l}ds_1\,\ldots\,ds_{l-1}\,ds^*\,
      \ex^U_{(\bs_{l-1},s_*),X}\left\{[1 - \hrstar(U,(W\nud)^U)]_+\right\}. 
       \non 
\ena

To evaluate the expectation, note that $\hrstar(u,(W\nud)^u)$, $u\ge0$, is a $\bPstar$-martingale
\adbb{with respect to the filtration~$\FF^{X\nud}$}, with expectation~$1$.
Now, if the path $w^U$ has~$r$ jumps of vectors $J_1,\ldots,J_r$ at times $t_1 < \cdots < t_r$, 
write
\eq\label{ADB-y-def-S}
  x_Y(v) \ :=\ n^{-1}(w(v) - \ej\n_{l-1}(v\;;s_1,\ldots,s_{l-1}) + Y),
\en
and define
\eq\label{ADB-ghat-def-S}
  \hg^{J'}(\cdot) \Def g^{J'}(\cdot),\ J'\ne -\ej;\qquad \hg^{-\ej}(\cdot) \Def 0;
   \qquad \hg(\cdot) \Def \sum_{J'\in\JJ} \hg^{J'}(\cdot).
\en
Then, for $u\le\hht\nud$, we have
\eqa
  \lefteqn{\hrstar(u,w^u)} \label{ADB-rho-hat-star}\\
  &=& 
  \begin{cases}
      \exp\left( n\int_{0}^u \{\hg(x_{X-\ej}(v)) - \hg(x_{X-\ej}(v)-\ej n^{-1})\}\,{dv}\right)\\
       \qquad\prod_{\{k\colon 0 \le t_k \le u\}}
        \left\{\hg^{J_k}(x_{X-\ej}(t_k-) - \ej n^{-1}) / \hg^{J_k}(x_{X-\ej}(t_k-)) \right\} \\ 
           \hskip4in\mbox{if}\ u < s^*;\\
     \exp\left( n\int_{0}^{s^*} \{\hg(x_{X-\ej}(v)) - \hg(x_{X-\ej}(v)-\ej n^{-1})\}\,{dv}\right)\\
       \qquad\prod_{\{k\colon 0 \le t_k \le s^*\}}
        \left\{\hg^{J_k}(x_{X-\ej}(t_k-) - \ej n^{-1}) / \hg^{J_k}(x_{X-\ej}(t_k-)) \right\}  \\
            \hskip4in\mbox{if}\ u \ge s^*; \\
  \end{cases} \non
\ena
after the `extra jump' at~$s^*$, the chains have come together. Note that $\hrstar(u,w^u)$ is
absolutely continuous except for jumps at the times~$t_k$.  Then also,
from Assumptions S3 and~S4,
\[
    \frac{\hg^{J'}(x-\ej n^{-1})}{\hg^{J'}(x)} \Eq 1, \adbr{\qquad J' \notin \{\eii,\, i \in I\uj\}},
\]
and
\eq\label{ADB-ratio-bnd-S}
    \left| \frac{\hg^{\eii}(x-\ej n^{-1})}{\hg^{\eii}(x)} - 1 \right| \Le \frac{2\|D g^{\eii}\|_{\d_0}}{n g^{\eii}(c)}
      \Le 2L_1/n,\quad \adbr{i\in I\uj},
\en
uniformly in $|x-c| \le \d_0$.  Hence, if we define the stopping time
\eq\label{ADB-stopping2}
    \hph_n \Def \inf\{u\ge0\colon\, \hrostar(u,(W\nud)^u) \ge 2\}, 
\en
the jumps of the martingale $\hrostar(u,(W\nud)^u)$, stopped at the time $\min(U,\hht_n^\d,\hph_n)$,
are of size at most $4L_1/n$.  Hence, \adbr{recalling that~$L_0\le 3/2$,} the stopped martingale has expected 
quadratic variation up to time~$u$ of at most
\eq\label{ADB-QV-S}
    \int_0^u \left(\frac{4L_1}{n}\right)^2\,n \adbr{\sum_{i\in I\uj} }\|g^{J'}\|_{\d_0} \,dv
        \Le \adbr{n^{-1} K_{\Ref{ADB-QV-S}} G\uj u,}
\en
where \adbr{$K_{\Ref{ADB-QV-S}} := 24L_1^2 \in \KKA$.} This in turn also implies that, 
for $0 < u \le U$,
\eq\label{ADB-mean-square-MG-S}
   \ex^U_{(\bs_{l-1},s_*),X_0}
     \{(\hrostar(u\wedge\hht\nud\wedge\hph_n,(W\nud)^{u\wedge\hht\nud\wedge\hph_n})
                   - 1)^2\} \Le n^{-1} \adbr{K_{\Ref{ADB-QV-S}} G\uj} u.
\en

Clearly, from~\Ref{ADB-mean-square-MG-S} and from Kolmogorov's inequality, once again taking 
$U = \adbr{U\uj}$,
\eq\label{ADB-phi-bound-S}
   \bP^U_{(\bs_{l-1},s_*),X_0}[\hph_n < \min\{U,\hht_n^\d\}] \Le n^{-1}\adbr{K_{\Ref{ADB-QV-S}} G\uj U\uj}
               \adbb{\Eq} n^{-1}\adbr{K_{\Ref{ADB-QV-S}} (G\uj/g\uj)^{1/2}}.
\en
Hence, for this choice of~$U$, from~\Ref{ADB-mean-square-MG-S} and~\Ref{ADB-phi-bound-S},
\eqa
  \lefteqn{\bE^U_{(\bs_{l-1},s_*),X_0}\left\{[1 - \hrostar(U,(W\nud)^U)]_+\right\}}\non \\ 
       && \Le \min\bigl\{1,2n^{-1/2}\adbr{K_{\Ref{ADB-QV-S}}(G\uj/g\uj)^{1/4}} 
                  + \bP^U_{(\bs_{l-1},s_*),X_0}[\hht_n^\d < U] \bigr\}. 
           \phantom{XXX} \label{ADB-abs-bnd-S}
\ena
In view of Lemma~\ref{ADB-lema3},
the expectation of the term $\bP^U_{(\bs_{l-1},s_*),X_0}[\hht_n^\d < U]$ is bounded by~$2n^{-4}$, 
uniformly in $\nS{X_0-nc} \le n\d/2$, because~$n \ge \max\{\adbg{n_{\Ref{Oct-n2.7-def}}},\smh^{-1}(\d)\}$. 
Substituting this into~\Ref{ADB-Poisson-part-2}, and using~\Ref{ADB-RN-bound-S}, it follows that
\eqa
   &&\sum_{l \geq 1}\pr[N\nud(U)=l-1]\sum_{W\in\Z^d}|q^{U}_{l,X_0}(W+l\ej) - q^{U}_{l-1,X_0-\ej}(W+l\ej)| \non\\
     &&\qquad\Le 2\{2n^{-1/2}\adbr{K_{\Ref{ADB-QV-S}}(G\uj/g\uj)^{1/4}} + 2\pr_{X_0}[\hht_n^\d < U]\}\non\\
     &&\qquad\Le 2\{2n^{-1/2}\adbr{K_{\Ref{ADB-QV-S}}(G\uj/g\uj)^{1/4}} + 4n^{-4}\}, \label{ADB-single-rho-result-S}
\ena
uniformly for~$X_0$ such that $\nS{X_0-nc} \le n\d/2$, and for $n \ge \max\{n_{\Ref{Oct-n2.7-def}},\smh^{-1}(\d)\}$. 
Thus the contribution
to~$d_{TV}\{\law_{X_0}(X\nud(U)),\law_{X_0-\ej}(X\nud(U))\}$ from the second part of~\Ref{ADB-Poisson-part-2} is at 
most 
\eq\label{Dec-second-contn-S}
    2\adbr{K_{\Ref{ADB-QV-S}}(G\uj/g\uj)^{1/4}} n^{-1/2} + 4n^{-4},
\en 
and this, with~\Ref{ADB-prob-este-part-1}, proves the theorem.
\end{proof}

\begin{remark}\label{ADB-Poisson-case}
{\rm
\adbr{As observed after~\Ref{ADB-new-gdef}, we \adbb{always} have $G\uj \le d\Lbar$;
however, \adbb{if $A = -\l I$ and~$(X_n)$ is as in Theorem~\ref{ADB-MPP-A-too}, $G\uj/g\uj = 1$ does not} grow
with~$d$.}
}
\ignore{
The bound~\Ref{ADB-QV-S} for the expected quadratic variation 
of the martingale $\rstar(u,w^u)$ allows for contributions
from all jumps~$J'$ of~$X\nud$.
However, as is clear from~\Ref{ADB-rho-hat-star}, if~$J'$ is such that 
$g^{J'}(y) = g^{J'}(y+Jn^{-1})$ for all positions~$y$, then a jump of~$J'$ in~$X\nud$ does not
change the value of $\rstar(u,w^u)$.  Since only jumps in the coordinate directions~$\ej$ have 
rates that vary with position, only these contribute to the sum in~\Ref{ADB-QV-S}.
} 
\end{remark}

Theorem~\ref{ADB-initial-displacement} bounds differences of the form
$$
    \ex\{f(X\nud(U) \giv X\nud(0) = X_0 - \ej\} - \ex\{f(X\nud(U) \giv X\nud(0) = X_0\}, 
$$
showing that they
are of order $O(n^{-1/2})$ uniformly in $U\ge0$, for~$f$ such that
$\|f\|_\infty \le 1$.  
We now show that the corresponding second differences are of order $O(n^{-1})$.

\begin{theorem}\label{ADB-initial-displacement-2}
Let~$X_n$ be a sequence of elementary processes.
Fix any $\d < \d_{\ref{ML-drift-lemma-1}}$.  
Then there are constants 
$(K^{ji}_{\ref{ADB-initial-displacement-2}},\, 1\le j,i \le d)$ in~$\KKA$
such that, for any function~$f$ with $\|f\|_\infty \le 1$,
\eqa
    \lefteqn{\bigl|\ex\{f(X\nud(U)) \giv X\nud(0) = X_0 - \ej - \eii\} - \ex\{f(X\nud(U)) \giv X\nud(0) = X_0 - \ej\}} 
          \non \\
    &&\mbox{}\qquad - \ex\{f(X\nud(U)) \giv X\nud(0) = X_0  - \eii\} +  \ex\{f(X\nud(U)) \giv X\nud(0) = X_0\} \bigr| 
          \non \\ 
    &&\qquad\qquad\quad  \Le \adbb{K^{ji}_{\ref{ADB-initial-displacement-2}}}\, n^{-1}\Bl \frac{\adbr{G_{ij}^+}}{g_{ij}^-} \Br^{1/2}
   \max\Blb 1, \frac1{U\sqrt{\adbr{G_{ij}^+} g_{ij}^+}} \Brb,\phantom{XXXXX}
          \label{ADB-second-f-diff}
\ena
uniformly for all $U > 0$, for $\nS{X_0-nc} \le n\d/4$, and for 
$n \ge \max\{n_{\Ref{Oct-n2.7-def}},\smh^{-1}(\d)\}$, where
\[
    g_{ij}^+ \Def \max\{g\uii,g\uj\}; \quad  g_{ij}^- \Def \min\{g\uii,g\uj\};\quad
                 \adbr{G_{ij}^+ \Def \max\{g_{ij}^+, (G\uii + G\uj)\}}. 
\]
\end{theorem}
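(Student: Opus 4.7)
The plan is to extend the proof of Theorem~\ref{ADB-initial-displacement} by a two-coordinate coupling, exploiting the fact that both $g\uj = g^{-\ej}(c)$ and $g\uii = g^{-\eii}(c)$ are constant throughout $B_{\d_0}(c)$ by Assumption~S3, while $g^{\ej}$ and $g^{\eii}$ are linear there by Assumption~S4. I would introduce the trivariate realization
\[
   X\nud(u) \Eq X\nud(0) - \ej N\nud_j(u) - \eii N\nud_i(u) + W\nud(u),
\]
with $N\nud_j$ and $N\nud_i$ independent Poisson processes of rates $ng\uj$ and $ng\uii$ (counting the $-\ej$ and $-\eii$ jumps) inside the ball, and $W\nud$ collecting every other transition. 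Since the left-hand side of~\Ref{ADB-second-f-diff} is non-increasing in~$U$, it suffices to prove the bound for $U \le 1/\sqrt{G_{ij}^+ g_{ij}^+} \le 1/g_*$, on which range Lemma~\ref{ADB-lema3} bounds the probability of leaving the ball by $O(n^{-4})$, a negligible remainder.

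Next, I would condition each of the four expectations on the full paths of $(N\nud_j, N\nud_i)$ up to time~$U$, using the extra-jump injection trick of Theorem~\ref{ADB-initial-displacement} in the $-\ej$ direction for the $X_0-\ej$ chain, in the $-\eii$ direction for the $X_0-\eii$ chain, and in both directions for the $X_0-\ej-\eii$ chain, so as to align the argument of~$f$ across the four expressions. Because the Poisson rates are constant in the ball, the joint distribution of $(N\nud_j, N\nud_i)$ is the same in all four representations. Writing $p_j$, $p_i$ for the Poisson marginals and $Q^\bullet_{l_j, l_i}(f)$ for the conditional expectation of $f(X\nud(U))$ given the Poisson counts and starting point indicated by~$\bullet$, the target second difference reduces to
\eqs
\lefteqn{\sum_{l_j, l_i \ge 0} \bigl\{ p_j(l_j)p_i(l_i) Q^0_{l_j,l_i}(f) - p_j(l_j-1)p_i(l_i) Q^j_{l_j-1,l_i}(f) } \\
  &&\mbox{} - p_j(l_j)p_i(l_i-1) Q^i_{l_j,l_i-1}(f) + p_j(l_j-1)p_i(l_i-1) Q^{ji}_{l_j-1,l_i-1}(f) \bigr\}.
\ens
Adding and subtracting cross terms splits this into three parts: (A)~the product of two Poisson first differences paired with $Q^0_{l_j,l_i}(f)$; (B)~a single Poisson first difference in one coordinate paired with a $Q$-first-difference in the other; (C)~a pure joint $Q$-second difference weighted by $p_j(l_j-1)p_i(l_i-1)$.

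Parts~A and~B are treated by direct iteration of the one-coordinate techniques. Part~A applies~\Ref{ADB-BHJ-S} in each coordinate to contribute at most $(nU\sqrt{g\uj g\uii})^{-1}$, of the required order. Part~B combines the Poisson bound in one coordinate with the Radon--Nikodym martingale estimate~\Ref{ADB-single-rho-result-S} in the other, conditioning on the complementary Poisson process, again yielding $O(n^{-1})$ with the stated dependence on rates. The chief obstacle is Part~C, where the joint $Q$-second difference must be shown to be $O(n^{-1})$ rather than $O(n^{-1/2})$. For this I would use the explicit Radon--Nikodym formula~\Ref{ADB-rho-hat-star}: only factors $g^{J_k}(x-\ej n^{-1})/g^{J_k}(x)$ with $J_k = \eii$ for some $i' \in I\uj \cup I\uii$ depend on the starting point, and by exact linearity of $g^{\eii}$ each such factor equals $1 - A_{i'j}/(n g^{\eii}(x))$. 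Shifting the start by $-\eii$ changes $g^{\eii}(x)$ by $-A_{i'i}/n$, so the joint second difference of a single factor in the two shift directions is $O(n^{-2})$ rather than $O(n^{-1})$. Summed in the logarithm over the $O(nU)$ jumps of a typical path and re-exponentiated, this produces a Radon--Nikodym-second-difference martingale with expected quadratic variation $O(U/n)$ uniformly in the Poisson paths, bounded by arguments analogous to~\Ref{ADB-QV-S}--\Ref{ADB-mean-square-MG-S} together with Kolmogorov's inequality and the stopping times $\hph_n$ and $\hht\nud$ of Theorem~\ref{ADB-initial-displacement}. Combining the three parts then yields~\Ref{ADB-second-f-diff}.
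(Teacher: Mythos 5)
Your architecture is the same as the paper's: the trivariate realization splitting off independent Poisson streams of $-\ej$ and $-\eii$ jumps, conditioning on the Poisson paths, the three-way decomposition (product of Poisson first differences; cross terms; joint second difference of Radon--Nikodym derivatives), and the exact linearity of $g^{\eur}$ as the source of the cancellation in the hardest part. The gap is in the quantitative execution of your Part~C. You claim the second-difference martingale has expected quadratic variation $O(U/n)$; but a quadratic variation of that order yields, via Kolmogorov/Cauchy--Schwarz, a contribution of order $\sqrt{U/n}$, which at $U = U\uij = (G_{ij}^+g_{ij}^+)^{-1/2}$ is $O\bigl(n^{-1/2}(G_{ij}^+g_{ij}^+)^{-1/4}\bigr)$ --- the same $n^{-1/2}$ order as the first-difference bound of Theorem~\ref{ADB-initial-displacement}, not the claimed $O(n^{-1})$. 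The whole point of the linearity cancellation is that each jump of the martingale $R^U_{11}-R^U_{10}-R^U_{01}+1$ is a \emph{product} of the $O(L_1/n)$ rate-ratio increment with one of $|R^U_{11}-1|$, $|R^U_{10}-1|$, $|R^U_{01}-1|$, and these factors have second moments of order $n^{-1}(G\uii+G\uj)u$ by the analogue of~\Ref{ADB-mean-square-MG-S}; hence the expected quadratic variation is of order $n^{-2}\bigl((G\uii+G\uj)U\bigr)^2$, and only this product structure upgrades the contribution to $O\bigl(n^{-1}(G\uii+G\uj)U\bigr) \le O\bigl(n^{-1}(G_{ij}^+/g_{ij}^-)^{1/2}\bigr)$. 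Your heuristic of summing $O(n^{-2})$ log-increments over the $O(nU)$ jumps is a pathwise bound on $\log R^U_{11}-\log R^U_{10}-\log R^U_{01}$, not a quadratic variation; if you argue that way you must separately control the cross product $(R^U_{10}-1)(R^U_{01}-1)$ and justify re-exponentiation on an event where the derivatives stay bounded. As written, the two routes are conflated and the stated rate does not close the argument.

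A second, more technical omission: unlike the first-difference case, the second-difference martingale is not bounded below (nor above), so the exit and stopping events cannot be absorbed as in~\Ref{ADB-abs-bnd-S}. The paper needs a separate term-by-term treatment: the contribution of $\{\hht\nud<U\}$ is shifted onto the conditioned measure started from $X_0-\ej-\eii$, the stopping time $\tf_n^{11}$ is handled via the reciprocal martingale $1/R^U_{11}$ together with Kolmogorov's inequality, and in the cross terms the event that the Poisson ratio $r_{11,X_0}$ exceeds $n^2$ is controlled by the Poisson tail estimates of \BHJ. Your appeal to ``$\hph_n$ and $\hht\nud$'' does not cover these steps, and they are needed for a complete proof at the stated uniformity in $U$ and $X_0$.
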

 
\begin{proof}
As in the previous theorem, the supremum over~$f$ of the quantity being bounded in~\Ref{ADB-second-f-diff} 
is non-increasing in~$U$,
so that we can argue for \adbr{$U \le U\uij := (G_{ij}^+ g_{ij}^+)^{-1/2} \le 1/g_{ij}^+$}, and then use the bound for 
$U = U\uij$ for all larger values of~$U$.
We give the detailed argument for $j$ and~$i$ distinct; it is almost identical if they are the same.

Much as for~\Ref{ADB-Poisson-part-2}, we split off Poisson processes of $-\ej$ and~$-\eii$ jumps. 
We write $X\nud(u) := X_0 -\ej N\nud(u) - \eii (N')\nud(u) + W\nud(u)$, 
where the trivariate chain $(N\nud,(N')\nud,W\nud)$ with state space $\Z_+^2 \times \Z^d$ has transition rates
\eq\label{ADB-trivariate-process}
\begin{array}{ll}
 (l,l',W) \rightarrow (l+1,l',W) & \mbox{at rate}\ \, ng\uj ;\\
 (l,l',W) \rightarrow (l,l'+1,W) & \mbox{at rate}\ \,ng\uii ;\\
 (l,l',W) \rightarrow (l,l',W+J) & \mbox{at rate}\ \,n g^{J}\bigl( (X_0 - l\ej - l'\eii +W)/n \bigr),\\
     & \qquad\qquad\quad J\notin \{-\ej,-\eii\},
\end{array}
\en
up to the time~$\hht_n^\d$, and starts at $(0,0,0)$. Defining
\eqs
   q^u_{l,l',X}(W) &:=& \pr_X[W\nud(u) = W \giv N\nud(u)=l, (N')\nud(u) = l'];\\
   p_X(l,l',u) &:=& \pr_X[N\nud(u)=l,(N')\nud(u)=l'],
\ens
this allows us to deduce that
\eqa
  \lefteqn{\ex\{f(X\nud(U)) \giv X\nud(0) = X_0 - \ej - \eii \} 
                    - \ex\{f(X\nud(U)) \giv X\nud(0) = X_0 - \ej \}}\non\\[1ex]
   &&\qquad\mbox{} - \ex\{f(X\nud(U)) \giv X\nud(0) = X_0 - \eii \} 
                          +  \ex\{f(X\nud(U)) \giv X\nud(0) = X_0\} \non\\[1ex]
  &=& \sum_{X \in \Z^d}f(X)\sum_{l\geq 0}\sum_{l'\ge0} \Blb p_{X_0}(l-1,l'-1,U)
                     q^U_{l-1,l'-1,X_0-\ej-\eii }(X+l\ej+l'\eii )  \right. \non\\
  &&\quad\hskip1.1in \left.\mbox{} - p_{X_0}(l-1,l',U)q^U_{l-1,l',X_0-\ej }(X+l\ej+l'\eii ) \right. \non\\
  &&\qquad \hskip1.2in\left.\mbox{}  - p_{X_0}(l,l'-1,U)q^U_{l,l'-1,X_0-\eii }(X+l\ej+l'\eii ) \right. \non\\
  &&\quad \qquad \hskip1.3in \left.\mbox{} 
            + p_{X_0}(l,l',U) q^U_{l,l',X_0}(X+l\ej+l'\eii ) \Brb . \label{ADB-Poisson-part-3}
\ena
Write $r_{jk,X}(l,l',u) := p_{X}(l-j,l'-k,u)/p_{X}(l,l',u)$ for $j,k \in \{0,1\}$, and 
\[
    \rhr_{j,k,Y;l,l',X}^u(W) \Def q^u_{l-j,l'-k,X+Y}(W)/q^u_{l,l',X}(W).
\]
Then the right hand side of~\Ref{ADB-Poisson-part-3} can be expressed as
\eqa
  \lefteqn{\sum_{l\geq 0}\sum_{l'\ge0}  p_{X_0}(l,l',U)  \sum_{w \in \Z^d}f(w-l\ej -l'\eii)q^U_{l,l',X_0}(w)}\non\\ 
   &&   \Bigl\{ r_{11,X_0}(l,l',U) \rhr^U_{1,1,-\ej-\eii ;l,l',X_0}(w) - r_{10,X_0}(l,l',U) \rhr^U_{1,0,-\ej;l,l',X_0}(w)
     \phantom{XXX}
          \non \\
  &&\qquad\qquad\quad\hskip1in\mbox{}
   - r_{01,X_0}(l,l',U) \rhr^U_{0,1,-\eii ;l,l',X_0}(w) + 1 \Bigr\}. \label{ADB-RN-expression}
\ena

We now use the decomposition
\[
     \adbb{r}\rhr \Eq (r-1)(\rhr-1) + (r-1) + (\rhr - 1) + 1
\]
in each term of~\Ref{ADB-RN-expression}.  The sum corresponding to taking~$1$ yields nothing. 
Then, for the sum corresponding to taking~$(r-1)$ alone, summing over~$w$ first and using $\|f\|_\infty \le 1$, 
we have
\eqa
   \lefteqn{\sum_{l,l'\geq 0}  p_{X_0}(l,l',U) \sum_{w \in \Z^d} |f(w-l\ej -l'\eii )|\, q^U_{l,l',X_0}(w) }\non\\
   &&\qquad\qquad\quad |r_{11,X_0}(l,l',U) - r_{10,X_0}(l,l',U) - r_{01,X_0}(l,l',U)  + 1 |
             \label{ADB-RN-expression-r}\\
    &\le&  \sum_{l\geq 0}\sum_{l'\ge0} 
       p_{X_0}(l,l',U)|r_{11,X_0}(l,l',U) ) - r_{10,X_0}(l,l',U) - r_{01,X_0}(l,l',U)  + 1 |.
                 \non
\ena
As for \Ref{ADB-Poisson-part-2} and~\Ref{ADB-prob-este-part-1}, the processes $(N\nud,(N')\nud)$ can be
coupled to independent Poisson processes with rates $ng\uj$ and~$ng\uii$ respectively on the
interval $[0,U]$, with failure probability at most $\pr_{X_0}[\hht\nud < U]$.
Hence, using $\p\uj$ to denote~$\Po(nUg\uj)$, \Ref{ADB-RN-expression-r} gives a contribution 
to~\Ref{ADB-RN-expression} of at most
\eqa
 \lefteqn{\slo \sldo |\p\uj\{l\} - \p\uj\{l-1\}|\,
           |\p\uii\{l'\} - \p\uii\{l'-1\}|  + 4\pr_{X_0}[\hht\nud < U]}\non\\
  && \Eq 4\dtv\bigl(\p\uj,\p\uj*\ptmass \bigr)\,\dtv\bigl(\p\uii,\p\uii*\ptmass\bigr)
                + 4\pr_{X_0}[\hht\nud < U] \non\\
     && \Le \frac4{\sqrt{g\uj g\uii}} \,\frac1{n U} + 8n^{-4}, \phantom{XXXXXXXXXXXXXXXXXXXXX}
        \label{ADB-second-diff-b}
\ena
for $n \ge \max\{n_{\Ref{Oct-n2.7-def}},\smh^{-1}(\d)\}$, uniformly in $\nS{X_0-nc} \le n\d/4$.

We separate the sum corresponding to $(r-1)(\rhr-1)$ in~\Ref{ADB-RN-expression} into three pieces,
corresponding to the subscripts $(1,1)$, $(1,0)$ and~$(0,1)$, and use $\|f\|_\infty \le 1$.
We then use an argument similar to that leading to~\Ref{ADB-single-rho-result-S}; we sketch
it for the $(1,1)$ case.
First, by conditioning on the paths of $N\nud$ and~$(N')\nud$ and using~\Ref{ADB-M(1)-bound}
below, it follows, much as for~\Ref{ADB-single-rho-result-S} and for~\Ref{ADB-abs-bnd-S}, that, 
for each $l,l' \ge 0$,
\eqa
   \leqn{ \sum_{w \in \Z^d} q^U_{l,l',X_0}(w)|1 - \rhr^U_{1,1,-\ej-\eii ;l,l',X_0}(w)| } \non\\
     &\le& \min\{2, 2n^{-1/2}\sqrt{\adbr{K_{\Ref{ADB-QV-S}}(G\uii + G\uj)} U} 
                                    + 4n^{-1}\adbr{K_{\Ref{ADB-QV-S}}(G\uii + G\uj)} U \non\\ 
      &&\qquad\quad\mbox{} + 2\pr_{X_0}[\hht\nud < U \giv N\nud(u)=l, (N')\nud(u) = l']\}  \label{ADB-rR-1}\\
     &\le&  4n^{-1/2}\sqrt{\adbr{K_{\Ref{ADB-QV-S}}(G\uii + G\uj)} U} 
                                 +  2\pr_{X_0}[\hht\nud < U \giv N\nud(u)=l, (N')\nud(u) = l'].
           \non
\ena
Then, as in treating~\Ref{ADB-RN-expression-r}, and using Lemma~\ref{ADB-lema3}, we have 
\eqa
    \leqn{ \sum_{l,l'\geq 0}  p_{X_0}(l,l',U)|r_{11,X_0}(l,l',U) - 1|} \non\\
   &\le&  2\bigl\{\dtv\bigl(\p\uj,\p\uj*\ptmass \bigr) 
           + \dtv\bigl(\p\uii,\p\uii*\ptmass\bigr)\bigr\}  + 4\pr_{X_0}[\hht\nud < U] \non\\           
   &\le& \frac2{\sqrt{nUg\uj}} + \frac2{\sqrt{nUg\uii}} + 8n^{-4} \Le \frac4{\sqrt{nUg_{ij}^-}} + 8n^{-4}, 
     \label{ADB-rR-2}
\ena
for $n \ge \max\{n_{\Ref{Oct-n2.7-def}},\smh^{-1}(\d)\}$, uniformly in $\nS{X_0-nc} \le n\d/4$.

Combining the first part of~\Ref{ADB-rR-1} with~\Ref{ADB-rR-2} 
gives a contribution to~\Ref{ADB-RN-expression} bounded by
\eq\label{ADB-second-diff-a}
    K n^{-1}d^{1/2} \bigl((G\uii + G\uj)/g_{ij}^-\bigr)^{1/2} + 12n^{-2},
\en
uniformly for \adbr{$U \le U\uij$} and $\nS{X_0-nc} \le n\d/4$, for 
$K := 4\sqrt{\adbr{K_{\Ref{ADB-QV-S}}}} \in \KKA$.
Taking the second part of~\Ref{ADB-rR-1} with~\Ref{ADB-rR-2}, it is immediate that
\eqs
   \leqn{ 2\sum_{l,l'\geq 0}  p_{X_0}(l,l',U)|r_{11,X_0}(l,l',U) - 1|
               \pr_{X_0}[\hht\nud < U \giv N\nud(u)=l, (N')\nud(u) = l']} \\[-1ex]
   &&\mbox{}\hskip3.4in \bone\{r_{11,X_0}(l,l',U) \le n^2\} \\
   &&\Le 2n^2 \pr_{X_0}[\hht\nud < U] \Le 4n^{-2},\phantom{XXXXXXXXXXXXXXXXXX}
\ens 
by Lemma~\ref{ADB-lema3}, since $n \ge \max\{n_{\Ref{Oct-n2.7-def}},\smh^{-1}(\d)\}$.  
For the remainder, we have at most
\eqa
   \leqn{2 \sum_{l,l'\geq 0}  p_{X_0}(l,l',U)\bone\{r_{11,X_0}(l+1,l'+1,U) > n^2\}} \label{ADB-new-small-bit}\\
  &&\Le 2\pr_{X_0}[\hht\nud < U] + 2\sum_{l,l'\geq 0} \p\uj\{l\} \p\uii\{l'\}\bone\{r_{11,X_0}(l+1,l'+1,U) > n^2\}.
          \non
\ena
Now
\[
    |p_{X_0}(l,l',U) - \p\uj\{l\} \p\uii\{l'\}| \Le \pr_{X_0}[\hht\nud < U] \Le 2n^{-4}.
\]
\adbr{This implies that, if 
\eq\label{ADB-Poisson-prob-condition}
     \min(\p\uj\{l\}, \p\uii\{l'\}, \p\uj\{l+1\}, \p\uii\{l'+1\})\ \ge\ 2n^{-2},
\en 
then $r_{11,X_0}(l+1,l'+1,U) \le n^2$, giving no contribution to the sum in~\Ref{ADB-new-small-bit}.}
\adbb{This is because}
\[
   r_{11,X_0}(l+1,l'+1,U) \Le 3 \frac{\p\uj\{l\} \p\uii\{l'\}}{\p\uj\{l+1\} \p\uii\{l'+1\}}
                          \Le 3\frac{(l+1)(l'+1)}{n^2U^2 g_{ij}^- g_{ij}^+}\,;
\]
by Proposition A.2.3~(i) of \BHJ~(1992), if~\Ref{ADB-Poisson-prob-condition} holds,
\[
   3\frac{(l+1)(l'+1)}{n^2U^2 g_{ij}^- g_{ij}^+} \Le 100(\log n)^2 \ <\ n^2,
\]
for all $n \ge 40$. In proving the first inequality,
we assume that $nUg_{ij}^- \ge 1$, since the inequality in the 
statement of the theorem is immediate for smaller~$nU$.
\adbr{
This leaves only a contribution to the sum in~\Ref{ADB-new-small-bit} from $l,l'$ for 
which~\Ref{ADB-Poisson-prob-condition} does not hold,
and this is at most 
\[
    2 \slo \bigl\{\p\uj\{l\}\bone\{\p\uj\{l\} \le 2n^{-2}\} 
             + \p\uii\{l\}\bone\{\p\uii\{l\} \le 2n^{-2}\} \bigr\} \Le 8n^{-3/2},
\]
by Proposition A.2.3~(ii), (iii) and~(iv) of \BHJ~(1992), 
if $n\ge 10$, because we also have $nUg_{ij}^+ \le n$ in $U\le U\uij$. 
}

The trickiest sum is that corresponding to~$(\rhr-1)$ alone.  Using $\|f\|_\infty \le 1$, we need first
to examine the quantity
\eq\label{ADB-q-sum}
  \sum_{w \in \Z^d} q^U_{l,l',X_0}(w)  
  \Bigl|\rhr^U_{1,1,-\ej-\eii ;l,l',X_0}(w) - \rhr^U_{1,0,-\ej;l,l',X_0}(w) 
                - \rhr^U_{0,1,-\eii ;l,l',X_0}(w) + 1\Bigr|.
\en
We treat it, after conditioning on realizations of the underlying Poisson processes
$N\nud$ and~$(N')\nud$, as the expectation of the absolute value at time~$U$ of an 
\adbb{$\FF^{X\nud}$-martingale $M\ut(W\nud)$, defined in~\Ref{ADB-MG-def-2} below}. 
Let $W^u := (W(t),\,0\le t\le u)$ denote the restriction of a function~$W$ on $\re_+$ to $[0,u]$.
Write $\bs_l := (s_1,\ldots,s_l)$, $\bs'_{l'} := (s'_1,\ldots,s'_{l'})$.
If realizations of $N\nud$ and~$(N')\nud$, having $l$ and~$l'$ points respectively in $[0,U]$,  are denoted by
$\n_l(\cdot;\bs_l)$ and $\n'_{l'}(\cdot;\bs'_{l'})$, as in~\Ref{ADB-nu-def-S}, we then denote conditional
probability and expectation,
given $(N\nud)^U = \n_l(\cdot;\bs_l)$, $((N')\nud)^U = \n'_{l'}(\cdot;\bs'_{l'})$ and $X\nud(0) = X$, by
$\bP^U_{\bs_l, \bs'_{l'}, X}$ and $\bE^U_{\bs_l, \bs'_{l'}, X}$, and we denote the corresponding conditional
density of~$(W\nud)^u$ at the path segment~$W^u$, with respect to some suitable reference measure, by
\[
   q^U(u,W^u;\,\bs_l, \bs'_{l'}, X).
\]
We then define the Radon--Nikodym derivatives 
\eqs
    \rhr^U_{11}(u,W^u; (\bs_{l-1},s_*),(\bs'_{l'-1},s'_*),X_0)
     &:=& \frac{q^U(u,W^u;\,\bs_{l-1}, \bs'_{l'-1}, X_0 - \ej - \eii )}
            {q^U(u,W^u; (\bs_{l-1},s_*),(\bs'_{l'-1},s'_*),X_0)};\\
   \rhr^U_{10}(u,W^u; (\bs_{l-1},s_*),(\bs'_{l'-1},s'_*),X_0)
     &:=& \frac{q^U(u,W^u;\,\bs_{l-1}, (\bs'_{l'-1},s'_*), X_0 - \ej )}
            {q^U(u,W^u; (\bs_{l-1},s_*),(\bs'_{l'-1},s'_*),X_0)};\\
   \rhr^U_{01}(u,W^u; (\bs_{l-1},s_*),(\bs'_{l'-1},s'_*),X_0)
     &:=& \frac{q^U(u,W^u;\,(\bs_{l-1},s_*), \bs'_{l'-1}, X_0 - \eii )}
            {q^U(u,W^u; (\bs_{l-1},s_*),(\bs'_{l'-1},s'_*),X_0)};
\ens
these have explicit formulae analogous to~\Ref{ADB-rho-hat-star}ß.
We use them to formulate the analogue of the argument used in the proof of Theorem~\ref{ADB-initial-displacement}.
For example, we can write
\eqs
   \lefteqn{\sum_{w \in \Z^d} q^U_{l,l',X_0}(w) \rhr^U_{1,1,-\ej-\eii ;l,l',X_0}(w) } \\
  &=& \frac1{U^{l+l'}}\int_{[0,U]^{l+l'}} ds_1\ldots ds_{l-1}ds_*\,ds'_1\ldots ds'_{l'-1}ds'_* 
   \\  && \qquad\qquad\qquad
  \sum_{w \in \Z^d} 
      \bP^U_{\bs_{l-1},\bs'_{l'-1},X_0 - \ej - \eii }[W(U) = w] \\
  &=& \frac1{U^{l+l'}}\int_{[0,U]^{l+l'}} ds_1\ldots ds_{l-1}ds_*\,ds'_1\ldots ds'_{l'-1}ds'_* \\
  && \ \sum_{w \in \Z^d}  \bE^U_{(\bs_{l-1},s_*),(\bs'_{l'-1},s'_*),X_0}
        \{\rhr_{11}^U(U,W^U; (\bs_{l-1},s_*),(\bs'_{l'-1},s'_*),X_0)I[W(U) = w]\}.
\ens
The mean zero martingale~$\adbb{M\ut(W\nud)}$ of main interest to us can then be expressed as
\eqa
  \lefteqn{M\ut(W\nud)(u) \Def \rhr^U_{11}(u,(W\nud)^u; (\bs_{l-1},s_*),(\bs'_{l'-1},s'_*),X_0) }  
             \non\\
   &&\qquad\qquad\quad\ \mbox{} 
               - \rhr^U_{10}(u,(W\nud)^u; (\bs_{l-1},s_*),(\bs'_{l'-1},s'_*),X_0) \label{ADB-MG-def-2}\\
   &&\qquad\qquad\qquad\ \mbox{} 
       - \rhr^U_{01}(u,(W\nud)^u; (\bs_{l-1},s_*),(\bs'_{l'-1},s'_*),X_0) + 1,\non
\ena
with~$(W\nud)^U$ a random element with distribution~$\bP^U_{(\bs_{l-1},s_*),(\bs'_{l'-1},s'_*),X_0}$.
We also define the \adbb{$\FF^{X\nud}$}-martingale
\[
    M\ui(W\nud)(u) \Def \rhr^U_{11}(u,(W\nud)^u; (\bs_{l-1},s_*),(\bs'_{l'-1},s'_*),X_0)  - 1,
\]
for use in the proof below, as well as for the proof of the estimate of the $(1,1)$ term 
in~\Ref{ADB-rR-1} above.

We now set $x\nud(u) := n^{-1}(W\nud(u) + X_0 - \ej \n_{l-1}(u;\bs_{l-1}) - \eii \n'_{l'-1}(u;\bs'_{l'-1}))$ for
\hbox{$u < \min\{s_*,s'_*\}$.}
If, for $u < \min\{s_*,s'_*\}$ and $\nS{x\nud(u) - c} \le \d - 3n^{-1}\JmaxS$,
there is a jump of~$\eur$ in~$W\nud$ at time~$u$, for some
$1\le r\le d$, this gives rise to a jump in the martingale~$M\ut(W\nud)$ at~$u$ of
\eqs
    \lefteqn{\rhr^U_{11}(u-,(W\nud)^{u-}; (\bs_{l-1},s_*),(\bs'_{l'-1},s'_*),X_0)
          \Bigl(\frac{\hg^{\eur}(x\nud(u-) - n^{-1}(\ej+\eii ))}{\hg^{\eur}(x\nud(u-))}-1 \Bigr)}\\
    &-& \rhr^U_{10}(u-,(W\nud)^{u-}; (\bs_{l-1},s_*),(\bs'_{l'-1},s'_*),X_0)
           \Bigl(\frac{\hg^{\eur}(x\nud(u-) - n^{-1}\ej)}{\hg^{\eur}(x\nud(u-))}-1 \Bigr) \\
   &{}~~-& \rhr^U_{01}(u-,(W\nud)^{u-}; (\bs_{l-1},s_*),(\bs'_{l'-1},s'_*),X_0)
             \Bigl(\frac{\hg^{\eur}(x\nud(u-) - n^{-1}\eii )}{\hg^{\eur}(x\nud(u-))}-1 \Bigr).\phantom{X}
\ens
If $s_* < u < s'_*$, the elements $-n^{-1}\ej$ are removed from the arguments of $\hg^{\eur}$, simplifying
the considerations, but then $x\nud(u)$ is replaced by $x\nud(u)-n^{-1}\ej$; the elements $-n^{-1}\eii$ are 
removed  
if $s'_* < u < s_*$, and then $x\nud(u)$ is replaced by $x\nud(u)-n^{-1}\eii$;  if 
$u > \max\{s_*,s'_*\}$, both elements $-n^{-1}\ej$ and $-n^{-1}\eii$ are removed, and so there is no jump.
Now, because the transition rate $g^{\eur}(x)$ is linear in~$x$,
\eqs
   \lefteqn{ \Bigl(\frac{\hg^{\eur}(x\nud(u) - n^{-1}(\ej+\eii ))}{\hg^{\eur}(x\nud(u))}-1 \Bigr)
           - \Bigl(\frac{\hg^{\eur}(x\nud(u) - n^{-1}\ej)}{\hg^{\eur}(x\nud(u))}-1 \Bigr)}\\
    &&   \qquad \mbox{}\phantom{XXXXXXXXXXXX}   
                  - \Bigl(\frac{\hg^{\eur}(x\nud(u) - n^{-1}\eii )}{\hg^{\eur}(x\nud(u))}-1 \Bigr) 
   \Eq 0 , 
\ens
and so $R^U$ can be replaced by $|R^U-1|$ when bounding the sizes of the jumps, irrespective of
the relative positions of $s_*$, $s'_*$ and~$u$.
Since also, from~\Ref{ADB-L-defs} and Assumption~S4,
\eq\label{ADB-Y-bnd}
   \Blm \frac{\hg^{\eur}(x\nud(u) + n^{-1}Y)}{\hg^{\eur}(x\nud(u))}-1 \Brm \Le 2n^{-1}|Y|L_1,
\en
the remaining contributions to the jump in~$M\ut\adbb{(W\nud)}$ are at most
\eqa
   \lefteqn{\frac{4 L_1}{n} \{|\rhr^U_{11}(u,(W\nud)^u; (\bs_{l-1},s_*),(\bs'_{l'-1},s'_*),X_0) - 1|} \non\\ 
     &&\qquad \mbox{} + |\rhr^U_{10}(u,(W\nud)^u; (\bs_{l-1},s_*),(\bs'_{l'-1},s'_*),X_0) - 1| \non\\
    &&\qquad\qquad\mbox{}+ |\rhr^U_{01}(u,(W\nud)^u; (\bs_{l-1},s_*),(\bs'_{l'-1},s'_*),X_0) - 1|\}.
        \phantom{X}\label{ADB-main-jump}
\ena
We can now bound the quadratic variation arising from each of the three terms individually,
by the argument leading to~\Ref{ADB-mean-square-MG-S}.  Defining
$$ 
    \tf_n \Def \inf\bigl\{u\ge0\colon\, \tm(u) \ge 2 \bigr\}, 
$$
where
\eqs
    \tm(u) &:=&
     \max\bigl\{\rhr^U_{11}(u,(W\nud)^u; (\bs_{l-1},s_*),(\bs'_{l'-1},s'_*),X_0), \\
    &&\qquad\qquad        \rhr^U_{10}(u,(W\nud)^u; (\bs_{l-1},s_*),(\bs'_{l'-1},s'_*),X_0),\\
    &&\qquad\qquad\qquad         \rhr^U_{01}(u,(W\nud)^u; (\bs_{l-1},s_*),(\bs'_{l'-1},s'_*),X_0)\bigr\} ,
\ens
we use the martingale~$M\ui\adbb{(W\nud)}$ \adbr{and~\Ref{ADB-Y-bnd} with the argument leading to~\Ref{ADB-QV-S}} to give
\eqa\lefteqn{
    \bE^U_{(\bs_{l-1},s_*),(\bs'_{l'-1},s'_*),X_0}
     \{[\rhr^U_{11}(u\wedge \hht_n^\d \wedge \tf_n,(W\nud)^{u\wedge \hht_n^\d \wedge \tf_n}; 
         (\bs_{l-1},s_*),(\bs'_{l'-1},s'_*),X_0) - 1]^2 \} } \non\\
   && \Le n^{-1} \adbr{4K_{\Ref{ADB-QV-S}}(G\uii + G\uj) u;}\phantom{XXXXXXXXXXXXXXXXXXXXXXX}
           \label{ADB-M(1)-bound}
\ena
the same bound \adbr{holds for $\rhr^U_{10}$ and~$\rhr^U_{01}$ also, but with $4(G\uii + G\uj)$ 
replaced by $G\uj$ and~$G\uii$ respectively.}  
Hence the expected quadratic variation of the martingale~$M\ut(W\nud)$ stopped at 
$u\wedge \hht_n^\d \wedge \tf_n$ is at most 
\eqs
    \leqn{ \adbr{n(G\uii + G\uj) \int_0^u 
              \Bl\frac{12 L_1}{n}\Br^2  \Bl \frac{4K_{\Ref{ADB-QV-S}}(G\uii + G\uj) v}n \Br\,dv} } \\
    && \qquad \Le \adbr{2n^{-2}\bigl((G\uii + G\uj) u\bigr)^2  
                 (12 L_1)^2  \,K_{\Ref{ADB-QV-S}}  \Le n^{-2}K_8\bigl((G\uii + G\uj) u\bigr)^2}, 
\ens
uniformly in~$\nS{X_0-nc} \le n\d$, and in $l$, $l'$, $\bs_{l-1}$, $\bs'_{l'-1}$, $s_*$ and~$s'_*$,
for $K_8 := 2(12 L_1)^2  \,\adbr{K_{\Ref{ADB-QV-S}}} \in \KKA$.  This gives
a contribution of at most \adbr{$n^{-1}\sqrt{K_8}\,(G\uii + G\uj) U$} to~\Ref{ADB-q-sum}, and 
hence to~\Ref{ADB-RN-expression},
from the expectation of $|M\ut - 1|$, stopped at $U\wedge \hht_n^\d \wedge \tf_n$.

Because the martingale~$M\ut(W\nud)$ is not uniformly bounded from below, we can no longer use
an argument as for~\Ref{ADB-abs-bnd-S} to bound the contributions to~\Ref{ADB-RN-expression} from 
the events $\hht_n^\d < U$
and $\tf_n < U$.  Instead, we consider their contributions for each element of~$M\ut(W\nud)$
separately.  For example, writing 
\eqs
    \trhr_*^U &:=& \rhr^U_{11}((W\nud)^U; (\bs_{l-1},s_*),(\bs'_{l'-1},s'_*),X_0);\\
    \Estar &:=&  \bE^U_{(\bs_{l-1},s_*),(\bs'_{l'-1},s'_*),X_0}; \quad
    \Pstar \Def  \bP^U_{(\bs_{l-1},s_*),(\bs'_{l'-1},s'_*),X_0},
\ens
we have
\eqs
 \lefteqn{\sum_{w \in \Z^d} f(w-l\ej-l'\eii ) \Estar 
      \{\trhr_*^U I[W\nud(U)=w,\hht_n^\d < U] \} }\\
  &\le& \Estar  \Bigl\{\trhr_*^U  I\Bigl[\sup_{0\le u\le U}|X_0 + W\nud(u) - \ej \n(u;\bs_{l-1},s_*) \\
  &&\quad\qquad\qquad\qquad\qquad\qquad\mbox{}  
                - \eii \n'(u;\bs'_{l'-1},s'_*) - nc| \ge n\d - 3\JmaxS\Bigr] \Bigr\} \\
  &=& \bE^U_{\bs_{l-1},\bs'_{l'-1},X_0-\ej-\eii }
           \Bigl\{I\Bigl[\sup_{0\le u\le U}|X_0 + W\nud(u) - \ej \n(u;\bs_{l-1}) \\
  &&\quad\mbox{} - \eii \n'(u;\bs'_{l'-1})  - \ej \charI_{[s_*,U]}(u) - \eii \charI_{[s'_*,U]}(u)- nc| 
              \ge n\d - 3\JmaxS \Bigr] \Bigr\} \\
  &\le& \bP^U_{\bs_{l-1},\bs'_{l'-1},X_0-\ej-\eii } [\t\nud(\d - 3n^{-1}\JmaxS - 2n^{-1}/\sqrt{\lmin(\S)}) < U].
\ens
Now both the inequalities
\[
   \nS{X_0 -\ej-\eii - nc} \Le n\d/2
\] 
and
\[
    \t\nud\bigl(\d - 3n^{-1}\JmaxS - 2n^{-1}/\sqrt{\lmin(\S)}\bigr)\ \ge\ \t\nud(3\d/4)
\]
are satisfied if $n^{3/4}\d > 20d^{-1}\JmaxS$ and $\nS{X_0-nc} \le n\d/4$.
Taking expectations over the realizations of $(N\nud)^U$ and~${((N')\nud)}^U$ 
and invoking Lemma~\ref{ADB-lema3} thus gives a contribution
to~\Ref{ADB-RN-expression} of at most $2n^{-4}$, uniformly in $\nS{X_0-nc} \le n\d/4$,
for~$n \ge \max\bigl\{n_{\ref{ADB-lema3}}(1/g_*),(20d^{-1}\JmaxS/\d)^{4/3},\smh^{-1}(\d)\bigr\}$; 
this inequality is satisfied if $n \ge \max\{\adbg{n_{\Ref{Oct-n2.7-def}}},\smh^{-1}(\d)\}$.  Then
\eqs
   \lefteqn{\sum_{w \in \Z^d} f(w-l\ej-l'\eii) \Estar 
      \Bigl\{\trhr_*^U  
   I[W\nud(U)=w,\tf_n < \min\{U,\hht\nud\}] \Bigr\} }\\
    &\le& \Estar 
      \{\trhr_*^U I[\tf_n < \min\{U,\hht\nud\}] \} \\
    &\le& 2 \Pstar[\tf_n < \min\{U,\hht\nud\}] 
   + \Estar \{\trhr_*^U I[\tf_n^{11} < \min\{U,\hht\nud\}] \},
\ens
where 
\[
   \tf^{11}_n \Def \inf\bigl\{u\ge0\colon\, \rhr^U_{11}(u,(W\nud)^u; (\bs_{l-1},s_*),(\bs'_{l'-1},s'_*),X_0) \ge 2\}.
\]
The first of these terms is at most \adbr{$5n^{-1}K_{\Ref{ADB-QV-S}}(G\uii + G\uj) U$}, using~\Ref{ADB-M(1)-bound}
and its analogues for the quantities $R_{11}^U$, $R_{10}^U$ and~$R_{01}^U$ appearing
in the definition of~$\tm(U)$, and then applying Kolmogorov's inequality; the argument is
much as for~\Ref{ADB-phi-bound-S}. The second is no larger than
\eq\label{ADB-last-hope}
    2\bP^U_{\bs_{l-1},\bs'_{l'-1},X_0  - \ej - \eii } [\tf_n^{11}  < \min\{U,\hht\nud\}].
\en
However, under $\bP^U_{\bs_{l-1},\bs'_{l'-1},X_0  - \ej - \eii }$,
the process 
$$
          M' \Def  \{1/\rhr^U_{11}(u,(W\nud)^u; (\bs_{l-1},s_*),(\bs'_{l'-1},s'_*),X_0),\,u\ge0\}
$$
is an \adbb{$\FF^{X\nud}$-}martingale with mean~$1$.  Arguing much as for~\Ref{ADB-QV-S},
its expected quadratic variation up to the time $\min\{U,\hht\nud,(\tf'_n)^{11}\}$
can be shown to be at most $4n^{-1}\adbr{K_{\Ref{ADB-QV-S}}(G\uii + G\uj)} U$; 
here, $(\tf'_n)^{11} := \inf\{u\ge0\colon\, M'(u) \ge 2\}$.  Using an argument much as that
for~\Ref{ADB-phi-bound-S},
Kolmogorov's inequality now shows that the quantity in~\Ref{ADB-last-hope} is itself at most 
$8n^{-1}\adbr{K_{\Ref{ADB-QV-S}}(G\uii + G\uj)} U$, 
giving a contribution to~\Ref{ADB-RN-expression} of order~$O(n^{-1}\adbr{(G\uii + G\uj)} U)$.
Combining these considerations with \Ref{ADB-second-diff-b} and~\Ref{ADB-second-diff-a}, the inequality
of the theorem follows for \adbb{$n \ge \max\{\adbg{n_{\Ref{Oct-n2.7-def}}},\smh^{-1}(\d)\}$}.
\end{proof}

\subsection{Coupling copies of $X\nud$}\label{ML-couplings}
In this section, we show 
that copies of~$X\nud$ with different initial states can be defined
on the same probability space, in such a way that they 
coincide rather quickly. As a consequence, the total variation distance between
their distributions becomes small as time increases. Our arguments are
reminiscient of those in Roberts \& Rosenthal~(1996).

The basic coupling that we use relies mainly on the drift
towards~$nc$ to achieve this.  We define the process $(X\nudci(t),X\nudct(t))$ on
$\tB_{n,\d}(c) \times \tB_{n,\d}(c)$
to have the transition rates
$$
\begin{array}{ll}
   (X_1,X_2) \ \to\ (X_1+J,X_2+J) \ &\mbox{at rate}\quad n\{g^J_\d(n^{-1}X_1)\wedge g^J_\d(n^{-1}X_2)\};\\
   (X_1,X_2) \ \to\ (X_1,X_2+J) \quad &\mbox{at rate}\quad n\{g^J_\d(n^{-1}X_2) - g^J_\d(n^{-1}X_1)\}_+;\\
   (X_1,X_2) \ \to\ (X_1+J,X_2) \quad &\mbox{at rate}\quad n\{g^J_\d(n^{-1}X_1) - g^J_\d(n^{-1}X_2)\}_+,
\end{array}
$$
for each $J\in\JJ$.  Let its generator be denoted by~$\tAA\nud$.
Our coupling argument begins with a drift inequality.

\begin{lemma}\label{ML-drift-lemma-2}
Let~$X_n$ be a sequence of elementary processes.
Define $h_1(X_1,X_2) := \nS{X_1-X_2}^2$;
let $\lla_1$ be as in~\Ref{ADB-Lambda-def} and~$\d_1 := \d_0/\sqrt{\lmax(\S)}$.
Then, for $\d \le \d_1/3$, there exists $K_{\ref{ML-drift-lemma-2}} \in \KKA$,
defined in~\Ref{ML-K3-def}, such that, for all $(X_1,X_2)$ with 
$\max\{\nS{X_1-nc},\nS{X_2-nc}\} \le n\d - \JmaxS$ and $\nS{X_1-X_2} \ge dK_{\ref{ML-drift-lemma-2}}$, we have
\[
   \tAA\nud h_2(X_1,X_2) \Le -\half\lla_1 h_2(X_1,X_2),
\]
where $h_2(X_1,X_2) := h_1(X_1,X_2) + d^2K_{\ref{ML-drift-lemma-2}}^2$.
\end{lemma}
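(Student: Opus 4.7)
The plan is to expand $\tAA\nud h_1(X_1,X_2)$ directly from the coupled transition rates, extract a negative drift via the continuous Lyapunov equation~\Ref{ADB-Sigma-eqn}, bound a noise term linear in $\nS{X_1-X_2}$, and absorb the residual into the additive constant $d^2K_{\ref{ML-drift-lemma-2}}^2$ used to define $h_2$. Since $\d\le\d_1/3$, both $x_i:=n^{-1}X_i$ lie in $B_{\d_0}(c)$ so all $g^J(x_i)$ are defined; and the hypothesis $\max_i\nS{X_i-nc}\le n\d-\JmaxS$ ensures that every one-sided jump keeps the respective marginal inside $\tB_{n,\d}(c)$, so the truncation in~\Ref{ADB-transition-rates-delta} is inactive and $g^J_\d(x_i)=g^J(x_i)$ throughout. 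Coupled jumps leave $D:=X_1-X_2$ unchanged; the two types of uncoupled jumps change $D$ by $\pm J$ at rates $n\{g^J(x_1)-g^J(x_2)\}_\pm$. Using the identities $\{a\}_+-\{-a\}_+=a$ and $\{a\}_++\{-a\}_+=|a|$, one obtains
\[
\tAA\nud h_1(X_1,X_2) \Eq 2n\sJJ(g^J(x_1)-g^J(x_2))\,D^T\S^{-1}J + n\sJJ|g^J(x_1)-g^J(x_2)|\,\nS{J}^2.
\]

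The key simplification comes from Assumptions~S3--S4: in an elementary process, $g^J$ is constant for $J\ne\ej$, while $g^{\ej}$ is linear with gradient equal to the $j$-th row of $A=DF(c)$, so only $J=\ej$ contributes and $g^{\ej}(x_1)-g^{\ej}(x_2)=n^{-1}(AD)_j$. The drift term then collapses to
\[
 2\sum_{j=1}^d (AD)_j(\S^{-1}D)_j \Eq 2D^TA^T\S^{-1}D \Eq -D^T\S^{-1}\s^2\S^{-1}D,
\]
where the last identity follows from multiplying~\Ref{ADB-Sigma-eqn} by $\S^{-1}$ on both sides and using that scalars equal their transposes. Combining with $D^T\S^{-1}\s^2\S^{-1}D\ge\lmin(\s^2_\S)\nS{D}^2=2\lla_1\nS{D}^2$ bounds the drift above by $-2\lla_1\nS{D}^2$. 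For the noise, only $J=\ej$ survives again, leaving $\sum_{j=1}^d|(AD)_j|\,\nS{\ej}^2$, which I would bound by $|AD|_1/\lmin(\S)\le \sqrt{d\lmax(\S)}\|A\|\,\nS{D}/\lmin(\S)=:C\sqrt{d}\,\nS{D}$, using $|AD|_1\le\sqrt d\,|AD|\le\sqrt d\,\|A\|\,|D|$ and $|D|\le\sqrt{\lmax(\S)}\nS{D}$; the constant $C$ is a continuous function of $\|A\|$ and $\Sp'(\S)$ and so lies in $\KKA$.

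Putting the two estimates together yields $\tAA\nud h_1\le -2\lla_1\nS{D}^2+C\sqrt{d}\,\nS{D}$; since $\tAA\nud$ annihilates constants, $\tAA\nud h_2=\tAA\nud h_1$, and the desired inequality $\tAA\nud h_2\le-\half\lla_1 h_2$ is equivalent to $\tfrac32\lla_1\nS{D}^2\ge C\sqrt{d}\,\nS{D}+\half\lla_1 d^2K_{\ref{ML-drift-lemma-2}}^2$. Setting
\begin{equation}\label{ML-K3-def}
K_{\ref{ML-drift-lemma-2}} \Def C/\lla_1 \ \in\ \KKA
\end{equation}
and invoking the hypothesis $\nS{D}\ge d K_{\ref{ML-drift-lemma-2}}$ twice closes the bound: $C\sqrt{d}\,\nS{D}\le Cd\,\nS{D}=\lla_1(dK_{\ref{ML-drift-lemma-2}})\,\nS{D}\le\lla_1\nS{D}^2$ (for $d\ge 1$) handles the noise, and $\half\lla_1 d^2K_{\ref{ML-drift-lemma-2}}^2\le\half\lla_1\nS{D}^2$ handles the additive constant, leaving exactly $\half\lla_1\nS{D}^2$ on the favourable side. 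There is no deep obstacle; the main care required is the bookkeeping of Step~1 (the sign juggling with $\{\cdot\}_+$ and the isolation of the $J=\pm\ej$ contributions from $J\in\JJ$) and checking that the truncation is genuinely inactive under the stated hypothesis on $\max_i\nS{X_i-nc}$, which is immediate since a single jump of $\S$-length at most $\JmaxS$ cannot leave $\tB_{n,\d}(c)$.
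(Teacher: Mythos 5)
Your proof is correct and follows essentially the same route as the paper: expand $\tAA\nud h_1$ over the uncoupled jumps (the truncation being inactive under the stated hypothesis), turn the drift term into $-(X_1-X_2)^T\S^{-1}\s^2\S^{-1}(X_1-X_2)\le-2\lla_1 h_1$ via linearity of the rates and the Lyapunov equation~\Ref{ADB-Sigma-eqn}, bound the remaining term linearly in $\nS{X_1-X_2}$, and absorb it together with the additive constant using $\nS{X_1-X_2}\ge dK_{\ref{ML-drift-lemma-2}}$. The only differences are cosmetic: the paper bounds the noise term by $L_1\L\sqrt{\lmax(\S)}\,\nS{X_1-X_2}/\lmin(\S)$ rather than via $\|A\|$, and takes $K_{\ref{ML-drift-lemma-2}}=\max\{1,\cdot\}$ (the lower bound $1$ being needed only in later arguments, and note that your scale-invariance claim properly attaches to $C/\lla_1$ rather than to $C$ itself).
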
 

\begin{proof}
For any $\ccc > 0$, write $h^{(\ccc)}(X_1,X_2) := h_1(X_1,X_2) + \ccc$.
By the definition of~$h_1$, the transitions where both components of $(X_1,X_2)$ make the
same jump make no contribution to $\tAA\nud h^{(\ccc)}(X_1,X_2)$.  Hence, for $(X_1,X_2)$ with
$\max\{\nS{X_1-nc},\nS{X_2-nc}\} \le n\d - \JmaxS$,
and writing $x_i := n^{-1}X_i$, $i=1,2$, we have
\eqs
 \leqn{\tAA\nud h^{(\ccc)}(X_1,X_2) \Eq n\sJJ \Bigl\{(g^J(x_1) - g^J(x_2))_+\{2J^T \S^{-1}(X_1-X_2) + J^T\S^{-1}J\} } \\
    &&\mbox{}\qquad\quad\quad\qquad +  (g^J(x_2) - g^J(x_1))_+ \{-2J^T \S^{-1}(X_1-X_2) + J^T\S^{-1}J\} \Bigr\} \\
    && = 2n(F(x_1) - F(x_2))^T\S^{-1}(X_1-X_2) + n\sJJ|g^J(x_1) - g^J(x_2)|\,\nS{J}^2,\phantom{XX}
\ens
since, for such~$(X_1,X_2)$, $g^J_\d(x_1) = g^J(x_1)$ and $g^J_\d(x_2) = g^J(x_2)$.
Now, 
since the transition rates~$g^J(x)$ are all linear in~$x$, we have
\eqs
  \leqn{2n(F(x_1) - F(x_2))^T\S^{-1}(X_1-X_2) \Eq -(X_1-X_2)^T\S^{-1}\s^2\S^{-1}(X_1-X_2)} \\
          &&\Le -\lmin(\s^2_\S) \nS{X_1-X_2}^2 \Eq -2\lla_1 h_1(X_1,X_2). \phantom{XXXXXXXXXXX}
\ens
Then
\eqs
  n\sJJ|g^J(x_1) - g^J(x_2)|\,\nS{J}^2 &\le& L_1 (\L/\lmin(\S)) \nS{X_1-X_2} \sqrt{\lmax(\S)} \\
     &\le& \lla_1\nS{X_1-X_2}^2 \Eq \lla_1 h_1(X_1,X_2), 
\ens
if \adbr{$\nS{X_1-X_2} \ge dK_{\ref{ML-drift-lemma-2}}$, where 
\eq\label{ML-K3-def}
    K_{\ref{ML-drift-lemma-2}} \Def \max\{1, L_1 (\Lbar/\lla_1)\sqrt{\Rh(\S)/\lmin(\S)}\}\ \in\ \KKA .
\en 
}
From this, it follows that
$$
   \tAA\nud h^{(\ccc)}(X_1,X_2) \Le -\lla_1 h_1(X_1,X_2) \Le -\half\lla_1(h_1(X_1,X_2) + d^2K_{\ref{ML-drift-lemma-2}}^2),
$$  
for $\d \le \d_1/3$ and for $\nS{X_1-X_2} \ge dK_{\ref{ML-drift-lemma-2}}$.  
Taking $\ccc = d^2K_{\ref{ML-drift-lemma-2}}^2$ proves the lemma.
\end{proof}

We now convert the drift inequality into a bound on the distribution of the coupling time
\[
   \t_C \Def \inf\{t \ge 0\colon X\nudci(t) = X\nudct(t)\},
\]
for arbitrary values of $(X\nudci(0),X\nudct(0))$. Our broad strategy is as follows.  
If $\max\{\nS{X\nudci(0) - nc},\nS{X\nudct(0)-nc}\} > 3n\d/8$, we run both
processes {\it independently\/} for a fixed time interval~$t_1$, chosen in such a way that we have 
$\max\{\nS{X\nudci(t_1) - nc},\nS{X\nudct(t_1)-nc}\} \le 3n\d/8$, with
probability at least $1/16$.
If not, we continue to repeat the procedure, over intervals of length~$t_1$, until both 
$X\nudci$ and~$X\nudct$ are within~$3n\d/8$ of~$nc$ \adbg{in the $\nS{\cdot}$-norm}.
We then couple the processes $X\nudci$ and $X\nudct$
as for Lemma~\ref{ML-drift-lemma-2}, and run them until the minimum $(\t_3\wedge\t_0)$ of the time~$\t_3$, 
at which $\nS{X\nudci(t) - X\nudct(t)}$ first falls below the value~$dK_{\ref{ML-drift-lemma-2}}$, and 
the time~$\t_0$, at which first $\max\{\nS{X\nudci(t) - nc},\nS{X\nudct(t)-nc}\} > n\d/2$. 
We call these two stages together a `drift phase'.

If~$\t_0$ is the first to occur, we begin another drift phase.
If not, we enter a `trial phase', of length $t_3 = 1/\lla_1$. 
By Theorem~\ref{ADB-initial-displacement} and Assumption~S2, and because $\nS{X\nudci(t) - X\nudct(t)}
\le dK_{\ref{ML-drift-lemma-2}}$ implies that 
$|X\nudci(t) - X\nudct(t)|_1 \le d^{3/2}K_{\ref{ML-drift-lemma-2}}\sqrt{\lmax(\S)}$, we have
\eq\label{ADB-TV-X1-X2}
   \dtv\bigl(\law(X\nudci(\t_3 + t_3)),\law(X\nudct(\t_3 + t_3))\bigr) 
              \Le d^{3/2} C_* n^{-1/2}\Bl \frac{d\Lbar}{g_*} \Br^{1/4},
\en
for $n \ge n_{\Ref{Oct-n2.7-def}}$, where \adbr{we use $G\uj \le d\Lbar$}, and where
\eq\label{ADB-C_*-def}
    C_* \Def K_{\ref{ML-drift-lemma-2}}\sqrt{\lmax(\S)}\max_{1\le j\le d} \Blb K^j_{\ref{ADB-initial-displacement}}
           \max\Bl 1, \frac{\sqrt{\lla_1}}{(dg\uj\Lbar)^{1/4}}\Br \Brb \ \in\ \KKA.
\en
Hence the two processes can be coupled
in such a way that $X\nudci(\t_3 + t_3) = X\nudct(\t_3 + t_3)$, except on an event of probability at most
$d^{3/2} C_* n^{-1/2}(\L/g_*)^{1/4}$, and this is the coupling that we use.  On the event that
the values of the two processes are equal at time $\t_3 + t_3$, the coupling is said to have been
successful.  If not, a new drift phase begins, (or another trial phase, if
$\nS{X\nudci(\t_3 + t_3) - X\nudct(\t_3 + t_3)} \le dK_{\ref{ML-drift-lemma-2}}$ and 
$\max\{\nS{X\nudci(\t_3 + t_3) - nc},\nS{X\nudct(\t_3 + t_3)-nc} \} \le n\d/2$).  
This sequence of steps is repeated until coupling is achieved.

\begin{theorem}\label{ML-TV}
Let~$X_n$ \adbr{be a sequence of elementary processes.}
Let $\lla_1$ be as in~\Ref{ADB-Lambda-def}, and let $\d_1 := \min\{\adbr{3},\d_0/\sqrt{\lmax(\S)}\}$. 
Then, for any $\d \le \d_1/3$,
there is a constant 
$n_{\ref{ML-TV}}$ in~$\KKA$ such that, 
whatever the values of  $X_1,X_2 \in \tB_{n,\d}(c) := \{X \in \Z^d\colon \nS{X-nc} \le n\d\}$ and $t\ge0$,
\[
    \dtv\bigl(\law(X\nud(t) \giv X\nud(0) = X_1),\law(X\nud(t) \giv X\nud(0) = X_2)\bigr)
               \Le 9\, (2n)^{1/16} e^{-\lla_2 t},
\]
for all $n \ge \max\{d^4, n_{\ref{ML-TV}},\smh^{-1}(\d/2)\}$, where $\lla_2 :=  \lla_1/128$.
The quantity $n_{\ref{ML-TV}}$ is defined in~\Ref{Dec-n3.5-def}. 
\end{theorem}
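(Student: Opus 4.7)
The plan is to execute the three-phase coupling scheme described just before the statement: construct $(X\nudci,X\nudct)$ on a single probability space, with marginals $X\nud$ starting from $X_1$ and $X_2$, by alternating independent blocks, drift phases, and trial phases, and then control the tail of the coupling time $\t_C$.

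If $\max\{\nS{X\nudci-nc},\nS{X\nudct-nc}\}>3n\d/8$, I would run the two marginals independently for a block of fixed length $t_1 \in \KKA$. Applying Lemma \ref{ML-time-to-root-n} to each marginal separately with $\h=3\d/8$ and taking $t_1$ large enough (in $\KKA$), both marginals lie in $B_{3n\d/8,\S}(nc)$ by time $t_1$ with probability at least $1/16$; a geometric number of such blocks thus returns both processes to the inner ball. Once inside, I would switch to the coupled dynamics with generator $\tAA\nud$, running it until $\t_3\wedge\t_0$. By Lemma \ref{ML-drift-lemma-2}, the process $h_2(X\nudci(t),X\nudct(t))e^{\lla_1 t/2}$ stopped at $t\wedge\t_3\wedge\t_0$ is a supermartingale, and optional stopping applied to the initial bound $h_2\le (3n\d/4)^2+d^2K_{\ref{ML-drift-lemma-2}}^2$ gives an exponential tail on scale $\lla_1/2$, so $\t_3=O(\lla_1^{-1}\log n)$ holds with probability close to one. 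Applying Lemma \ref{ADB-lema3} to each marginal with $\h=\d/2$ (and then union-bounding) shows $\pr[\t_0\le t]$ to be exponentially small in $n$ uniformly in $t$ of polynomial size, so $\t_3<\t_0$ with probability bounded below by a constant in $\KKA$.

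After $\t_3$, $\nS{X\nudci(\t_3)-X\nudct(\t_3)}\le dK_{\ref{ML-drift-lemma-2}}+\JmaxS$; converting norms via $|y|_1\le\sqrt{d\lmax(\S)}\,\nS{y}$ and decomposing the displacement by Assumption S2 into $O(d^{3/2})$ unit steps $\pm\ej$, I would iterate Theorem \ref{ADB-initial-displacement} with $U=t_3=1/\lla_1$ along this chain; using $G\uj\le d\Lbar$ yields the estimate $\dtv\le d^{3/2}C_*n^{-1/2}(d\Lbar/g_*)^{1/4}$ already recorded in~\Ref{ADB-TV-X1-X2}, which is~$\le 1/2$ for $n$ chosen appropriately in $\KKA$. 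Maximal coupling then matches the two processes at $\t_3+t_3$ with probability $\ge 1/2$; if it fails, a new drift phase (preceded, if necessary, by further independent blocks) begins. Each full round thus takes time at most $T^*=O(\lla_1^{-1}\log n)$ and succeeds with probability $\ge p_*$ for some $p_*\in\KKA$, so $\pr[\t_C>t]\le (1-p_*)^{\lfloor t/T^*\rfloor}$.

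The main obstacle is the bookkeeping needed to convert this round-level geometric tail into the explicit constants $(2n)^{1/16}$ and $\lla_1/128$: one must absorb the logarithmic factor of $T^*$ (arising from the large initial $h_2$ in the very first drift phase) into a polynomial prefactor in $n$, exploiting the fact that from the second round onwards the drift phase starts with $h_2=O(d^2K_{\ref{ML-drift-lemma-2}}^2)$ and hence takes only $O(1/\lla_1)$ time. One then has to balance the various factors of $2$ coming from the supermartingale rate $\lla_1/2$, the union bound on marginals, and the independent-block restarts, while verifying that every constant remains in $\KKA$ and that the coupled, independent and maximal-coupling dynamics can be patched into a single strong Markov process on an enlarged state space so that the strong Markov property applies cleanly at each phase boundary.
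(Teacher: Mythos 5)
Your coupling architecture is exactly the one the paper uses (it is outlined in the text preceding the theorem), and your use of Lemmas \ref{ML-time-to-root-n}, \ref{ADB-lema3} and~\ref{ML-drift-lemma-2}, together with Theorem~\ref{ADB-initial-displacement} via~\Ref{ADB-TV-X1-X2}, is sound. The gap is in the final step, which you dismiss as bookkeeping but which is in fact the substance of the proof. First, your claim that each round takes time at most $T^*=O(\lla_1^{-1}\log n)$ and that only the first drift phase is long is not justified: after a failed trial phase, or after an excursion out of $B_1$ at time $\t_0$, the conditional law of the pair is uncontrolled and one must restart from a worst-case configuration in $B_0$, where $h_2$ can again be of order $(n\d)^2$; the supermartingale of Lemma~\ref{ML-drift-lemma-2} then gives only an exponential tail for $\t_3$ whose ``effective length'' is $\asymp\lla_1^{-1}\log n$, in every round, not a deterministic bound. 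Second, even if you truncate each round at $T^*\asymp\lla_1^{-1}\log n$ and run the geometric-trials argument, the bound $\pr[\t_C>t]\le(1-p_*)^{\lfloor t/T^*\rfloor}$ only yields a decay rate of order $\lla_1/\log n$, i.e.\ $Ce^{-c\lla_1 t/\log n}$, which is strictly weaker than the asserted $9(2n)^{1/16}e^{-\lla_1 t/128}$ once $t$ exceeds order $\lla_1^{-1}\log^2 n$; so the theorem as stated does not follow from your sketch.

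The missing idea is to work with truncated exponential moments of the coupling time rather than with round counts: the paper defines $\f_i(\grrr,s)=\max_{(X_1,X_2)\in B_i}\ex\uxit\{e^{\grrr(\t_C\wedge s)}\}$ and closes a recursive system of inequalities among $\f_0,\f_2,\f_3$ by the strong Markov property. The key device is Jensen's inequality applied to the optional-stopping bound, as in~\Ref{ML-tau3-bnd}: taking the small power $u=u_0=1/32$ turns the potentially large factor $h_2(X_1,X_2)/(d^2K_{\ref{ML-drift-lemma-2}}^2)\le(2n\d/dK_{\ref{ML-drift-lemma-2}})^{2}$ into the harmless polynomial prefactor $(2n)^{2u_0}=(2n)^{1/16}$, instead of losing a $\log n$ in the exponential rate. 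One then verifies the bound up to time $s_n=64\lla_1^{-1}\log n$, and only at that point iterates over blocks of length $s_n$, which is legitimate because the accumulated prefactor is dominated by $e^{-u_0\lla_1 s_n/4}$ for $n\ge n_5$. Without this fractional-moment step (or an equivalent device), your argument cannot produce the rate $\lla_2=\lla_1/128$ with a fixed polynomial prefactor, so as written the proposal does not prove the theorem.
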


\ignore{
\begin{remark}\label{Aug-K3}
{\rm
We prove a simple bound in Theorem~\ref{ML-TV} that only involves the quantities in~$\KKA$
through~$\lla_1$.  For this, the radius~$n\d$ of the state space of~$X\nud$ needs to be restricted, and
this explains the upper bound on~$\d$ involving~$K_{\ref{ML-drift-lemma-2}}$, imposed through 
that on~$\d_1$. 
Since we think of~$\d$ being chosen to be small, such upper bounds are of no great importance.}
\end{remark}
}

\begin{proof}
Recalling the definition~\Ref{Aug-tilde-B-def} of~$\tB_{n,\d}(c)$, we begin by writing
\eqs
    B_0 &:=& \tB_{n,\d}(c)\times \tB_{n,\d}(c);\\
    B_1 &:=& \{(X_1,X_2) \in B_0\colon \max_{i=1,2}\nS{X_i - nc} \le n\d/2\}; \\
    B_2 &:=& \{(X_1,X_2) \in B_0\colon \max_{i=1,2}\nS{X_i - nc} \le 3n\d/8\};\\
    B_3 &:=& \{(X_1,X_2) \in B_1\colon  \nS{X_1-X_2} \le dK_{\ref{ML-drift-lemma-2}}\} . 
\ens
Clearly, $B_3 \subset B_1 \subset B_0$ and $B_2 \subset B_1$.
Let 
\eqs
    \t_0 &:=& \inf\{t \ge 0\colon (X\nudci(t),X\nudct(t)) \notin B_1\};\\
    \t_2 &:=& \inf\{t \ge 0\colon (X\nudci(t),X\nudct(t)) \in B_2\};\\
    \t_3 &:=& \inf\{t \ge 0\colon (X\nudci(t),X\nudct(t)) \in B_3\}.
\ens 
Then, for any $s,\grrr > 0$, and for $0\le i\le 4$, we define 
\eq\label{ML-phi_i-def}
     \f_i(\grrr,s) \Def \max_{(X_1,X_2)\in B_i} 
             \ex\uxit\{e^{\grrr (\t_{C}\wedge s)} \},
\en
where the coupling time~$\t_{C}$ is defined for copies of the processes $X\nudci$ and~$X\nudct$ specified using
drift and trial phases as above, and where $\ex\uxit$ and~$\pr\uxit$ refer to the distribution 
conditional on $(X\nudci(0),X\nudct(0)) = (X_1,X_2)$.  We shall establish that, for~$n$ large enough,
\[
    \pr\uxit[\t_C > t] \Le 9(2n)^{1/16} e^{-\lla_1 t/128}.
\]

Fix~$t_1'$ such that $e^{t_1'} = 128$, and write $t_1 := \lla_1^{-1}t_1'$.  Then, 
for any~$(X_1,X_2) \in B_0$, it follows from Lemma~\ref{ML-time-to-root-n} that 
$$
    \pr\uxit[\inf\{t > 0\colon \nS{X\nudci(t)} \le n\d/4\} >  t_1 ] \Le 1/2,
$$
if also $n\d \ge 8\JmaxS$; this is true, from~\Ref{Dec-n2.7-implies}, for 
$n \ge \max\{n_{\Ref{Oct-n2.7-def}},\smh^{-1}(\d/2)\}$. 
But now, taking $\h = 3\d/8$ and $\nS{X_0-nc} \le \d/4$ in Lemma~\ref{ADB-lema3}, 
it follows that 
$$
   \pr\uxit[\nS{X\nudci(t_1)} \le 3\d/8]  \ \ge\ 1/4,
$$
provided that~$n \ge \max\{n_{\ref{ML-drift-lemma-1}}, (t_1'K_{\ref{ADB-lema3}}(\Lbar/\lla_1))^{1/2}\}$, 
and that $2n^{-4} \le 1/2$.
Hence, for any choice of~$(X_1,X_2) \notin B_2$ and for \adbb{$n \ge \max\{n_1,\smh^{-1}(\d/2)\}$}, where
\[
    n_{1} \Def \adbb{\max\{d^4,\adbg{n_{\Ref{Oct-n2.7-def}}}\}} \ \in\ \KKA,
\]
it follows that
$\pr\uxit[(X\nudci(t_1),X\nudct(t_1)) \in B_2] \ge 1/16$, if $X\nudci$ and~$X\nudct$ are run
independently over the interval $[0,t_1]$.
Thus, defining
\[
     \f(\grrr,s)\ :=\ \max_{(X_1,X_2) \in B_0}\ex\uxit\bigl\{e^{\grrr (\t_2\wedge s)}\bigr\} ,
\]
the Markov property yields
\[
   \f(\grrr,s) \Le e^{\grrr t_1} \{1 + 15\f(\grrr,s)\}/16.
\]
Choosing $u_0 = 1/32$, so that, in particular, $15e^{u_0t_1'}/16  = 15(128)^{u_0}/16 < 31/32$, and 
then~$\grrr = u_0\lla_1$,  it follows for any $s>0$ that
\[
    \f(\grrr,s) \Le 2 (128)^{u_0} \Le 31/15.
\]
Considering the possibilities if $\t_C \le \t_2$ or if $\t_C > \t_2$, it now follows by
the strong Markov property that
\eq\label{ADB-phi_0-inequality}
    \f_0(u_0\lla_1,s) \Le 31 \f_2(u_0\lla_1,s)/15.
\en

We next consider what happens if the process starts in~$B_2$.
If $dK_{\ref{ML-drift-lemma-2}} \ge 3n\d/4$, then $B_2 \subset B_3$, and so $\f_2(u_0\lla_1,s) \le \f_3(u_0\lla_1,s)$.
If not, note that
\eqa
  \leqn{ \ex\uxit \{e^{\grrr(\t_C\wedge s)}\} } \non\\
  &\le&  \ex\uxit \bigl\{e^{\grrr(\t_C\wedge s)}\bone\{\t_3 \le (\t_0 \wedge s)\} \bigr\}
         + \ex\uxit \bigl\{e^{\grrr(\t_C\wedge s)}\bone\{\t_0 \le (\t_3 \wedge s)\} \bigr\} \non\\
  &&\qquad\mbox{} + \ex\uxit \bigl\{e^{\grrr(\t_C\wedge s)}\bone\{(\t_0 \wedge \t_3) > s\} \bigr\}\non\\
  &\le& \ex\uxit \bigl\{e^{\grrr\t_3}\bone\{\t_3 \le (\t_0 \wedge s)\} \bigr\}\f_3(\grrr,s) \non\\
  &&\ \mbox{}  + e^{\grrr s}\{\pr\uxit [\t_0 \le (\t_3 \wedge s)] 
       + \pr\uxit[(\t_0 \wedge \t_3) > s]\} \f_0(\grrr,s), \label{ADB-Markov-split}
\ena
with the last inequality following from the strong Markov property.
Now, in view of Lemma~\ref{ML-drift-lemma-2}, if we define
\eq\label{ML-M1-def}
     M_1(t) \Def e^{\lla_1 t/2} h_2(X\nudci(t),X\nudct(t)),
\en
then $M_1(t\wedge \t_3 \wedge \t_0)$ is \adbb{an $\FF^{X\nudci,X\nudct}$-}supermartingale.  This implies that
\eq\label{ML-tau_3-bnd-0}
    d^2K_{\ref{ML-drift-lemma-2}}^2\, \ex\uxit \bigl\{e^{\lla_1\t_3/2} \bone\{\t_3 \le (\t_0 \wedge s)\} \bigr\}
          \Le h_2(X_1,X_2),
\en
and also that
\eq\label{ML-prob-bnd}
    d^2 K_{\ref{ML-drift-lemma-2}}^2\,  e^{\lla_1 s/2} \pr\uxit[(\t_3 \wedge \t_0) > s] 
          \Le h_2(X_1,X_2).
\en
Thence, by Jensen's inequality and~\Ref{ML-tau_3-bnd-0}, for any $0 \le u\le 1$, we also have
\eq\label{ML-tau3-bnd}
    \ex\uxit \bigl\{e^{u\lla_1\t_3/2} \bone\{\t_3 \le (\t_0 \wedge s)\} \bigr\}
              \Le \{h_2(X_1,X_2)/d^2 K_{\ref{ML-drift-lemma-2}}^2\}^u.
\en
Finally, from Lemma~\ref{ADB-lema3} with $\h = \d/2$, 
for $(X_1,X_2) \in B_2$ and for~$\th_1$ as in Lemma~\ref{ML-drift-lemma-1}, 
\eq\label{ADB-t0-prob}
    \pr\uxit[\t_0 \le (\t_3 \wedge s)] \Le \pr\uxit[\t_0 \le s]
            \Le 2 \exp\{-7n\th_1\d^2/64\},
\en
if $s  \le n/\lla_1$ and $n \ge \max\{n_2,\smh^{-1}(\d/2)\}$, where
$$
  n_2 \Def \max\Blb \adbg{n_{\Ref{Oct-n2.7-def}}}, \frac{K_{\ref{ADB-lema3}} \Lbar}{\lla_1} \Brb
        \ \in\ \KKA;
$$
this follows because $n_{\ref{ML-drift-lemma-1}} \le \adbg{n_{\Ref{Oct-n2.7-def}}}$, and because, for $n$ and~$s$
chosen in this way, $\exp\{n^{-1}\th_1(3n\d/8)^2\} \ge n^9 \ge nK_{\ref{ADB-lema3}}\L s$, because $\d \ge 2\smh(n)$ 
and $n \ge d^4$.

Hence, taking $\grrr = u_0\lla_1/2$, with~$u_0=1/32$ as above, substituting  \Ref{ML-prob-bnd},
\Ref{ML-tau3-bnd} and \Ref{ADB-t0-prob} into~\Ref{ADB-Markov-split}, and then using~\Ref{ADB-phi_0-inequality}, 
we have
\eqs
 \leqn{ \f_2(u_0\lla_1/2,s) 
    \Le (2n\d/dK_{\ref{ML-drift-lemma-2}})^{2u_0} \f_3(u_0\lla_1/2,s) + P(u_0,s,n)\f_0(u_0\lla_1/2,s) }\non\\
     &&\Le  (2n\d/dK_{\ref{ML-drift-lemma-2}})^{2u_0} \f_3(u_0\lla_1/2,s) + 31P(u_0,s,n)\f_2(u_0\lla_1/2,s)/15, \phantom{XX}
\ens
where

\eqs
   P(u,s,n) &:=& e^{u\lla_1 s/2}\{\pr\uxit[\t_0 \le (\t_3 \wedge s)] + \pr\uxit[(\t_3 \wedge \t_0) > s]\}  \\
   &\le& e^{u\lla_1 s/2}\{2 \exp\{-7n^{3/4} d\th_1\d^2/64\} + (2n\d/dK_{\ref{ML-drift-lemma-2}})^{2u_0} e^{-\lla_1 s/2}\},
\ens
recalling $n \ge d^4$ for the final inequality.
It thus follows that $P(u_0,s_n,n) \le 15/62$ for all $n \ge \max\{n_{2},d^4,\smh^{-1}(\d/2)\}$ such that
\eq\label{n3-cond}
    \Bl \frac{2n\d}{dK_{\ref{ML-drift-lemma-2}}} \Br^{2u_0} e^{-\lla_1(1-u_0)s_n/2} \Le \frac7{62}
   \quad \mbox{and}\quad e^{-(7n^{3/4} d\th_1\d^2/64 - u_0\lla_1 s_n/2)} \Le \frac2{31}.
\en
Picking $s = s_n := 64\log n/\lla_1$, and recalling that
 $u_0 = 1/32$ and $2\smh(n) \le \d \le \d_1/3 \adbr{\le 1 \le K_{\ref{ML-drift-lemma-2}}}$, it is enough
that $n \ge n_3$, where 
\eqs 
    n_3  &:=& \max\Blb \left[ \adbr{2^{1/16}} \Bl\frac{62}7\Br \right]^{1/30}  ,
         \Bl\frac{31}2 \Br^{1/6} \Brb 
        \ \in\ \KKA.
\ens
\ignore{
the last two inequalities ensure that
\[
    7n^{3/4}d\th_1\d^2/128 \ \ge\ n^{1/4}/64 \ \ge\ \log\{31/2\}, 
\]
satisfying the second of the conditions in~\Ref{n3-cond}, and the first two ensure that
\[
    (2n\d/dK_{\ref{ML-drift-lemma-2}})^{1/16} \Le 7n^{1/4}/62 \Le 7e^{31n^{1/4}/64}/62,
\]
satisfying the first of them.
}
 Hence, for $n \ge \max\{n_{2},n_{3},\smh^{-1}(\d/2)\}$,
\eq\label{ADB-phi_2-inequality}
    \f_2(u_0\lla_1/2,s_n) \Le 2(2n\d/dK_{\ref{ML-drift-lemma-2}})^{2u_0} \f_3(u_0\lla_1/2,s_n).
\en

For $(X_1,X_2) \in B_3$, we take $t_3 = 1/\lla_1$, and use~\Ref{ADB-TV-X1-X2} to
conclude that
\eq\label{ML-f3-bnd}
  \f_3(u\lla_1/2,s_n) \Le e^{u/2}\{1 + d^{3/2}C_* n^{-1/2}(\L/g_*)^{1/4}\,\f_0(u\lla_1/2,s_n)\} ,
\en
for~$C_*$ as defined in~\Ref{ADB-C_*-def}, if $n \ge n_{\Ref{Oct-n2.7-def}}$.  
From \Ref{ADB-phi_0-inequality} and~\Ref{ADB-phi_2-inequality}, we have
\eq\label{ADB-phi_0-phi_3-inequality}
   \f_0(u_0\lla_1/2,s_n) \Le 62(2n\d/dK_{\ref{ML-drift-lemma-2}})^{2u_0} \f_3(u_0\lla_1/2,s_n)/15,
\en
for all $n \ge \max\{d^4,n_{2},n_{3},\smh^{-1}(\d/2)\}$. Taking $u = u_0 = 1/32$ in~\Ref{ML-f3-bnd} 
and using~\Ref{ADB-phi_0-phi_3-inequality}, the coefficient of $\f_3(u_0\lla_1/2,s_n)$ on the
right hand side of~\Ref{ML-f3-bnd} is at most
\[
  e^{1/64} d^{3/2}  C_* n^{-1/2}(\L/g_*)^{1/4} 62(2n\d/dK_{\ref{ML-drift-lemma-2}})^{1/16}/15 \Le 1/2
\]
if, using $n \ge d^4$ and $\d \le \d_1/3 \adbr{\le 1 \le K_{\ref{ML-drift-lemma-2}}}$, 
\[
   n\ \ge\ n_{4} \Def  e\,\adbr{2^4} \Bl \frac{124C_*}{15} \Br^{64} \Bl \frac{\Lbar}{g_*} \Br^{16}   
                 \ \in\ \KKA.
\]
Hence, from \Ref{ML-f3-bnd} and~\Ref{ADB-phi_0-phi_3-inequality}, for 
$n \ge  \max\{\max_{2\le l\le 4} n_{l},\smh^{-1}(\d/2)\}$, we have 
\[
   \f_3(u_0\lla_1/2,s_n) \Le 2e^{1/64}.
\]
Combining this with~\Ref{ADB-phi_0-phi_3-inequality} and the definition~\Ref{ML-phi_i-def} of~$\f_0$, 
it follows that, for all $(X_1,X_2) \in B_0$, we have
\[
     \pr\uxit[\t_C > t] \Le 9 (2n)^{2u_0} e^{-u_0\lla_1 t/2}, \quad 0\le t\le s_n, 
\]
for 
$n \ge \max\{\adbg{n_{\Ref{Oct-n2.7-def}}},\max_{1\le l\le 4} n_{l},\smh^{-1}(\d/2)\}$.
In particular, 
$$
    \pr\uxit[\t_C > s_n] \Le 9(2n)^{2u_0} e^{-u_0 \lla_1 s_n/2}.
$$

However, by the strong Markov property, for $t > s_n$,
\[
   \pr\uxit[\t_C > t] \Le 
    \pr\uxit[\t_C > s_n] \max_{(X_1,X_2) \in B_0} \pr\uxit[\t_C > t-s_n].
\]
Arguing inductively, it follows that, for $r \in \Z_+$ and $0 \le v < s_n$,
\[
    \pr\uxit[\t_C > rs_n+v] \Le 
    \bigl(9(2n)^{2u_0}  e^{-u_0 \lla_1 s_n/2}\bigr)^r 9(2n)^{2u_0} e^{-u_0\lla_1 v/2}.
\]
Take~$n$ so large that $9(2n)^{2u_0} e^{-u_0 \lla_1 s_n/4} \le 1$; this is true, for $u_0 = 1/32$
and $s_n = 64\log n/\lla_1$, if 
$n \ge n_5 := 9^{4}2^{1/4}$.
Then, for all $(X_1,X_2) \in B_0$, we have
\[
   \pr\uxit[\t_C > rs_n+v] \Le 9(2n)^{2u_0} e^{-u_0 \lla_1(rs_n+v)/4}.
\]
Since $u_0 = 1/32$, the inequality in the theorem is thus proved for  
 $n \ge \max\{n_{\ref{ML-TV}},\smh^{-1}(\d/2)\}$, where 
\eq\label{Dec-n3.5-def}
   n_{\ref{ML-TV}} \Def \max_{1\le l\le 5} n_{l} \in \KKA,
\en
with 
$\lla_1/128$ for~$\lla_2$.
\end{proof}

\section{Stein's method based on~$X\nud$}\label{ADB-Steins-method}
\setcounter{equation}{0}

\subsection{Bounding the solutions of the Stein equation}\label{ADB-Stein-bounds}
We now use the results of the previous section to bound the first and second
differences of the solutions $h_B := h_{B,n}^\d$ of the Stein equation corresponding to
the generator~$\AA\nud$ defined in~\Ref{ADB-generator-eqn}, for the elementary
processes satisfying Assumptions G0, G1 and S2--S4.  
We recall from the
introduction the definitions
\eq\label{ADB-delta(X)-norms}
   \|\D f(X)\|_\infty \Def \max_{1\le j\le d}|\D_j f(X)|;\quad
   \|\D^2 f(X)\|_\infty \Def \max_{1\le j,k \le d}|\D_{jk} f(X)|,
\en
where $\D_j$ and~$\D_{jk}$, as defined in~\Ref{ADB-h-diffs-def}, denote the components of the 
first and second difference operators $\D$ and~$\D^2$, respectively.

\begin{theorem}\label{ADB-h-bounds}
Let~$X_n$ be \adbr{an elementary process, 
satisfying Assumptions G0, G1 and~S2--S4 for some $\d_0 > 0$.}  
Let~$\d_1 := \min\{\adbr{3},\d_0/\sqrt{\lmax(\S)}\}$. 
Then there are constants $\k_0,\k_1,\k_2 \in \KKA$ such that,
for any $B \subset \Z^d$ and any $\d \le \d_1/3$,
the solution~$h_B := h_{B,n}^\d$ of the Stein equation 
$$
    \AA\nud h_B(X) \Eq \charI_B(X) - \Pi\nud\{B\}
$$
satisfies
\eqs
   &&|h_B(X)| \Le \lla_1^{-1}\k_0\log n;\quad \|\D h_B(X)\|_\infty \Le \lla_1^{-1}\k_1 d^{1/4} n^{-1/2}\log n;\\ 
         &&    \|\D^2 h_B(X)\|_\infty \Le \lla_1^{-1} \k_2 d^{1/2} n^{-1}\log n,
\ens
for all $\nS{X - nc} \le n\d/4$ and $n \ge  \max\{n_{\ref{ML-TV}},\smh^{-1}(\d/2)\}$.
The constants $\k_0,\k_1,\k_2$ are given in \Ref{Dec-kappa0}, \Ref{Dec-kappa1} and~\Ref{Dec-kappa2},
respectively, \adbr{and~$\lla_1$ is as in~\Ref{ADB-Lambda-def} .}
\end{theorem}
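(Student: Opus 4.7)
The plan is to use the integral representation~\Ref{ADB-Poisson-eq} for~$h_B$, write $P_t(X,B) := \pr[X\nud(t)\in B\giv X\nud(0)=X]$, and split the integral over~$[0,\infty)$ at a cutoff $T^* = O(\log n/\lla_2)$. On $[T^*,\infty)$, Theorem~\ref{ML-TV} supplies the long-time estimate $|P_t(X_1,B) - P_t(X_2,B)| \Le 9(2n)^{1/16}e^{-\lla_2 t}$, valid for any pair of initial states in $\tB_{n,\d}(c)$. On $[0,T^*]$, the short-time estimates on first- and second-order initial-state differences of~$P_t$ are supplied by Theorems~\ref{ADB-initial-displacement} and~\ref{ADB-initial-displacement-2}, which deliver the target $n^{-1/2}$ and $n^{-1}$ prefactors, up to mild integrable singularities as $t\to 0$.

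For $|h_B(X)|$, use the invariance of~$\Pi\nud$ to write
\[
  |P_t(X,B) - \Pi\nud\{B\}| \Le \int \dtv\bigl(\law(X\nud(t)\giv X),\law(X\nud(t)\giv Y)\bigr)\,d\Pi\nud(Y) \Le 9(2n)^{1/16}e^{-\lla_2 t}.
\]
Bounding the integrand by~$1$ on $[0,T^*]$ and by the displayed quantity on $[T^*,\infty)$, and choosing $T^*$ so that $(2n)^{1/16}e^{-\lla_2 T^*}$ is of constant order, yields $|h_B(X)| = O(T^* + 1/\lla_2) = O(\log n/\lla_1)$, since $\lla_2 = \lla_1/128$.

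For $|\D_j h_B(X)|$, apply the same splitting to $\D_j h_B(X) = -\int_0^\infty[P_t(X+\ej,B) - P_t(X,B)]\,dt$. On $[0,T^*]$, Theorem~\ref{ADB-initial-displacement}, applied with $X$ replaced by $X+\ej$, bounds the integrand by
\[
  K^j_{\ref{ADB-initial-displacement}}\, n^{-1/2}\,(G\uj/g\uj)^{1/4}\max\{1,\,1/[(g\uj G\uj)^{1/4}\sqrt t]\};
\]
since $G\uj \le d\Lbar$ and $1/\sqrt t$ is integrable near~$0$, this contributes $O(d^{1/4}n^{-1/2}T^*)$. On $[T^*,\infty)$, Theorem~\ref{ML-TV} with $T^*$ enlarged so that $(2n)^{1/16}e^{-\lla_2 T^*} = O(n^{-1/2})$ yields an $O(n^{-1/2}/\lla_2)$ contribution, producing $|\D_j h_B(X)| = O(d^{1/4}n^{-1/2}\log n/\lla_1)$. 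The argument for $|\D^2_{jk} h_B(X)|$ is entirely parallel, with Theorem~\ref{ADB-initial-displacement-2} handling the short-time regime (its factor $(G^+_{ij}/g^-_{ij})^{1/2}$ is $O(d^{1/2})$ since $G^+_{ij} \le d\Lbar$), and the four-term triangle inequality applied to Theorem~\ref{ML-TV} handling the long-time regime, now requiring $T^*$ large enough that the exponential sits below $n^{-1}$.

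The main obstacle is the logarithmic loss: because Theorem~\ref{ML-TV} carries the polynomial prefactor $(2n)^{1/16}$, the cutoff $T^*$ must grow as $\log n$, and this factor propagates into all three bounds. A subordinate technical point is the $1/t$ blow-up of the short-time bound for second differences: on the small interval where the formula exceeds the crude cap~$4$ (which applies since each $P_t \in [0,1]$), replace it by~$4$, and the resulting $O(\log n)$ contribution is absorbed into the $\log n$ factor already present. Finally, one must verify that the shifted starting points $X+\ej$, $X+\eii$, $X+\ej+\eii$ all satisfy the ball conditions $\nS{\cdot - nc} \le n\d/2$ and $\le n\d/4$ required by the two initial-difference theorems; this follows from $\nS{X-nc} \le n\d/4$ together with~\Ref{Dec-n2.7-implies} for large~$n$.
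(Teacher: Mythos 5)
Your proposal is correct and follows essentially the same route as the paper: the representation~\Ref{ADB-Poisson-eq} combined with stationarity of~$\Pi\nud$, a cutoff at $t = O(\lla_2^{-1}\log n)$ with Theorem~\ref{ML-TV} controlling the tail, and Theorems \ref{ADB-initial-displacement} and~\ref{ADB-initial-displacement-2} (with $G\uj \le d\Lbar$, and the small-$t$ singularity capped by the trivial bound on probability differences) controlling the bulk, exactly as in the paper's proof. The only differences are cosmetic bookkeeping choices (e.g.\ enlarging the cutoff rather than accepting an $O(n^{-1})$ tail, and a cap of $4$ rather than $2$), which do not affect the result.
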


\begin{proof}
The argument starts from
the explicit representation of~$h_B$ given in~\Ref{ADB-Poisson-eq}, which immediately yields
\eqa
 h_B(X) &=& - \int_0^\infty (\pr[X\nud(t) \in B \giv X\nud(0) = X]  - \Pi\nud\{B\})\,dt 
            \non \\
     &=& - \sum_{Y\in \tB_{n,\d}(c)} \int_0^\infty (\pr[X\nud(t) \in B \giv X\nud(0) = X] \non\\
  &&\qquad\qquad\qquad\mbox{}   -  \pr[X\nud(t) \in B \giv X\nud(0) = Y])\Pi\nud(Y)\,dt. 
            \phantom{XXX}\label{ADB-h_B-def}
\ena 
Because $\d \le \d_1/3$ and $9\cdot2^{1/16} \le 10$, we can use Theorem~\ref{ML-TV} for 
$n \ge \max\{n_{\ref{ML-TV}},\smh^{-1}(\d/2)\}$ to give
\eqs
    |h_B(X)| &\le& 2\lla_2^{-1}\log n 
           + \int_{2\lla_2^{-1}\log n}^\infty 10 n^{1/16}  e^{-\lla_2 t}\,dt \\
      &\le& 2\lla_2^{-1}\log n + 10 \lla_2^{-1} n^{-1},
\ens
\adbr{with $\lla_2 = \lla_1/128$,} proving the bound on~$|h_B(X)|$, with 
\eq\label{Dec-kappa0}
   \k_0 \Def 1536.
\en 

Next, from~\Ref{ADB-h_B-def}, we have
\eqa
   \lefteqn{h_B(X-\ej) - h_B(X)}   \label{ADB-h-diff-def}\\
  &&= \int_0^\infty (\pr[X\nud(t) \in B \giv X\nud(0) = X] - \pr[X\nud(t) \in B \giv X\nud(0) = X-\ej])\,dt. 
            \non
\ena
Taking $f(X) := \charI_B(X)$, Theorem~\ref{ADB-initial-displacement}, \adbr{with $G\uj$ bounded by~$\L$}, implies that
\eqs
   \lefteqn{\int_0^{2\lla_2^{-1}\log n} |\pr[X\nud(t) \in B \giv X\nud(0) = X] 
                   - \pr[X\nud(t) \in B \giv X\nud(0) = X-\ej]|\,dt }\\
    &&\Le   K^j_{\ref{ADB-initial-displacement}} \lla_2^{-1} n^{-1/2}
                 \{2\log n + 2\lla_2(\L g\uj)^{-1/2}\}\Bl \frac{\L}{g\uj} \Br^{1/4},
     \phantom{XXXXXXX}
\ens
if $\nS{X - nc} \le n\d/4$ and $n \ge \max\{n_{\Ref{Oct-n2.7-def}},\smh^{-1}(\d/2)\}$;
in performing the integration, the range is split at $t = (\L g\uj)^{-1/2}$.
The remainder of the integral is bounded by $10 \lla_2^{-1} n^{-1}$, as above,
if also $n \ge \max\{n_{\ref{ML-TV}},\smh^{-1}(\d/2)\}$, since $n_{\ref{ML-TV}} \ge n_{\Ref{Oct-n2.7-def}}$, 
completing the bound on $|h(X-\ej) - h(X)|$, and thence on~$\|\D h(X)\|_\infty$, with 
\eq\label{Dec-kappa1}
   \k_1 \Def  128 
                \Blb 10 + K_*\ui \Bl 2 + \frac{2\lla_2}{\sqrt{\L g_*}}\Br \Brb 
                      \Bl \frac{\Lbar}{g_*} \Br^{1/4}  \ \in\ \KKA,
\en
where $K_*\ui := \max_{1\le j\le d}K^j_{\ref{ADB-initial-displacement}}$.

For the second differences, the argument is entirely similar, using
Theorem~\ref{ADB-initial-displacement-2} for the bulk of the estimate, and bounding
the integrand by~$2$ for $0 \le t \le n^{-1}\bigl(\L g_{ij}^+\bigr)^{-1/2}$.  This gives the bound on~$\|\D^2 h(X)\|_\infty$,
with
\eq\label{Dec-kappa2}
    \k_2 \Def128\Blb 10 + \frac{2\lla_2}{\sqrt{\L g_*}} + K_*\ut \Bl\frac{\Lbar}{g_*}\Br^{1/2}
        \Bl 2 + \frac{\lla_2}{\sqrt{\L g_*}}\Br \Brb \ \in\ \KKA,
\en
where $K_*\ut := \max_{1\le i,j \le d} K^{ji}_{\ref{ADB-initial-displacement-2}}$.
\end{proof}

\begin{remark}\label{ADB-Poisson-case-4}
{\rm
As \adbb{in} Remark~\ref{ADB-Poisson-case}, \adbr{the dependence on~$d$, appearing through the
factors $d^{1/4}$ and~$d^{1/2}$
in the bounds on $\|\D h_B(X)\|_\infty$ and~$\|\D^2 h_B(X)\|_\infty$, respectively, is not needed
\adbb{if~$g_*^{-1}\max_{1\le j\le d}\{G\uj+g\uj\}$} remains bounded as $\nti$.}
\adbb{This is the case  if~$A = -\l I$, for some~$\l > 0$, and if $(X_n)$ is as in Theorem~\ref{ADB-MPP-A-too}.}
}
\end{remark}

\subsection{Reducing the generator}\label{ADB-reduction}
In this section, we show that \adbb{we can work with the simpler operator~$\ABA_n$, defined in~\Ref{ADB-approx-gen},
in place of the generator~$\AA\nud$.}   As a first step in the
reduction, we use two technical lemmas to bound the expectation
of the Newton remainder
\eq\label{ADB-Newton-remainder}
   e_2(W,J,h) \Def h(W+J) - h(W) - \D h(W)^T J - \half J^T \D^2h(W) J,
\en
for~$W$ a random vector.
For $\h > 0$ and $f\colon \Z^d \to \re$, we use the notation
\eq\label{ADB-norm-notation}
   \|f\|^\S_{n\h,\infty} \Def \max_{\nS{X - nc} \le n\h} |f(X)|,
\en
analogous to that of~\Ref{ADB-norm-E-defs}, but using $\nS{\cdot}$-balls,
with~$nc$ implicit.  Similarly, we write $\|\D h\|^\S_{n\h,\infty}$ and
$\|\D^2 h\|^\S_{n\h,\infty}$ for $\|f\|^\S_{n\h,\infty}$, when $f(X) = \|\D h(X)\|_\infty$
and $f(X) = \|\D^2 h(X)\|_\infty$, respectively, so that the conclusion of Theorem~\ref{ADB-h-bounds}
can be expressed as
\eqa
  && \|h_B\|^\S_{n\d/4,\infty} \Le \lla_1^{-1}\k_0\log n;\quad 
        \|\D h_B\|^\S_{n\d/4,\infty} \Le \lla_1^{-1}\k_1 d^{1/4} n^{-1/2}\log n;\non\\ 
  &&    \|\D^2 h_B\|^\S_{n\d/4,\infty} \Le \lla_1^{-1} \k_2 d^{1/2} n^{-1}\log n,  \label{ADB-bounds-on-h}
\ena
for $n \ge \max\{n_{\ref{ML-TV}},\smh^{-1}(\d/2)\}$ and $\d \le \d_1$.

Our control over the differences of the
functions~$h_B$ is only such that we can bound their $\|\cdot\|^\S_{n\h,\infty}$ norms
for suitable~$\h$, so these are the quantities that we need in our estimates.
For instance, if~$W$ is a random vector in~$\Z^d$ such that $\dtv(\law(W),\law(W+\ej)) \Le \e$,
we immediately have the bound
\[
    |\ex\{f(W+\ej) - f(W)\}| \Le 2\e \|f\|_\infty.
\]
Because we often only have control within certain (large) balls, we are led instead to
bounding a truncated quantity 
$$
   |\ex\{(f(W+\ej) - f(W))I[\nS{W-nc} \le n\h_1]\}|
$$ 
in terms of $\|f\|^\S_{n\h_2,\infty}$, for suitable choices of $\h_1$ and~$\h_2$.
The following lemma, proved in Section~\ref{L4.2}, provides what we need.

\begin{lemma}\label{ADB-Newton-truncation}
Suppose that~$W$ is a random vector in~$\Z^d$ such that
\[
   \dtv(\law(W),\law(W+\ej)) \Le \e_1, \ 1\le j\le d;\quad \pr[\nS{W-nc}> n\d/4] \Le \e_2. 
\]
Then, for $f\colon \Z^d\to\re$ and $1\le j\le d$, and for any $U,X \in \Z^d$ such that
$\max\{\nS{X},\nS{X+U}\} \le n\d/6$ and $n\d \ge 12\nS{U}$,
\eqs
    \leqn{ |\ex\{(f(W+X+U) - f(W+X))I[\nS{W-nc} \le n\d/3]\}| }\\
     && \quad \Le (\e_1\nl{U} + \e_2)\|f\|^\S_{n\d/2,\infty}.\phantom{XXXXXXXXXXX}
\ens
\end{lemma}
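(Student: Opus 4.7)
The approach is to convert the shift by $U$ in the argument of $f$ into a shift in the law of $W$ by a change of variables, and then to exploit the single-step total-variation hypothesis. First, I would extend that hypothesis: writing $U = \sum_{k=1}^{\nl{U}} v_k$ with each $v_k \in \{\pm\ej\}$, translation-invariance of $\dtv$ and the triangle inequality give
$$
  \dtv(\law(W), \law(W+U)) \le \nl{U}\,\e_1 .
$$

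Next, setting $\tilde f(v) := f(v+X)$, $I_0(v) := \bone\{\nS{v-nc} \le n\d/3\}$, $I_1(v) := \bone\{\nS{v-U-nc}\le n\d/3\}$, $\mu_0 := \law(W)$ and $\mu_1 := \law(W+U)$, the substitution $v = w+U$ gives
$$
   \ex\{f(W+X+U)\,I_0(W)\} \Eq \int \tilde f\,I_1\,d\mu_1, \qquad
   \ex\{f(W+X)\,I_0(W)\} \Eq \int \tilde f\,I_0\,d\mu_0,
$$
so the quantity to be bounded equals
$$
   \Bigl(\int \tilde f\,I_1\,d\mu_1 - \int \tilde f\,I_1\,d\mu_0\Bigr) \;+\; \int \tilde f\,(I_1-I_0)\,d\mu_0 .
$$
For the first parenthesised piece I would apply the total-variation bound from the first step, noting that on $\{v\colon I_1(v)=1\}$ we have $\nS{v+X-nc}\le \nS{v-U-nc}+\nS{X+U}\le n\d/3+n\d/6=n\d/2$, so $\|\tilde f\,I_1\|_\infty \le \|f\|^\S_{n\d/2,\infty}$; this yields a contribution of order $\e_1\nl{U}\,\|f\|^\S_{n\d/2,\infty}$.

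For the second piece, the integrand vanishes off the symmetric difference $D := \{v\colon I_0(v)\ne I_1(v)\}$. On $D$ one of the two indicators equals $1$, so using $\nS{X}, \nS{X+U} \le n\d/6$ we again obtain $|\tilde f(v)| \le \|f\|^\S_{n\d/2,\infty}$ on $D$. The critical step, and the one where I expect the real work to lie (though it is routine), is to show that $\mu_0(D)\le\e_2$. If $\nS{W-nc}\le n\d/3$ and $\nS{W-U-nc}>n\d/3$, the reverse triangle inequality together with $\nS{U}\le n\d/12$ forces $\nS{W-nc} \ge \nS{W-U-nc}-\nS{U} > n\d/4$; combined with the complementary case, where $\nS{W-nc} > n\d/3 > n\d/4$, this shows $D\subseteq\{\nS{W-nc}>n\d/4\}$, and the tail hypothesis gives $\mu_0(D)\le\e_2$. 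This is precisely where the constraint $n\d\ge12\nS{U}$ enters decisively. Combining the two pieces delivers the claimed bound.
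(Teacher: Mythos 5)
Your proof is correct and is essentially the paper's own argument: both decompose the truncated difference into a term controlled by $\dtv(\law(W),\law(W+U))\le\nl{U}\e_1$ with the sup-norm taken over the $n\d/2$-ball, plus an indicator-mismatch term supported, via the reverse triangle inequality and $n\d\ge12\nS{U}$, on the event $\{\nS{W-nc}>n\d/4\}$ of probability at most $\e_2$. Your change of variables (shifting the measure to $\law(W+U)$ rather than, as in the paper, shifting the argument inside the auxiliary function $\tff_X(Y):=f(Y+X)I[\nS{Y-nc}\le n\d/3]$) is only a cosmetic difference.
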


We use Lemma~\ref{ADB-Newton-truncation} to bound the Newton remainder defined in~\Ref{ADB-Newton-remainder}.  
Instead of bounding 
$e_2(W,J,h)$ directly, we bound a perturbation of it, 
\eq\label{Sept-E2-def}
   E_2(W,J,h) \Def e_2(W,J,h) + \frac12 \sum_{j=1}^d  J_j \D_{jj}h(W),
\en
which can be represented as a sum of third differences of~$h$.
The result, proved in Section~\ref{L4.3}, is as follows.
 
\begin{lemma}\label{ADB-Newton}
 If $W$ is a random vector in~$\Z^d$ such that 
\[
   \dtv(\law(W),\law(W+\ej)) \Le \e_1, \ 1\le j\le d;\quad \pr[\nS{W-nc} \ge n\d/4] \Le \e_2, 
\]
then, for any $J \in \Z^d$ such that $\nS{J} \le n\d/12$,
\eqa
  (i)\ &&\Bigl|\ex\Bigl\{ E_2(W,J,h) I[\nS{W-nc} \le n\d/3] \Bigr\}  \Bigr| \non\\
       && \quad \Le  \|\D^2 h\|^\S_{n\d/2,\infty} (C^{(1)}_{\ref{ADB-Newton}}(J)\e_1 + C^{(2)}_{\ref{ADB-Newton}}(J)\e_2), 
   \phantom{XXXXXXXXXXX}
    \non
\ena
where 
\eq\label{Newton-C-defs}
   C^{(1)}_{\ref{ADB-Newton}}(J) \Def \sixth \nl{J}(\nl{J}+1)(\nl{J}+2);\quad
   C^{(2)}_{\ref{ADB-Newton}}(J) \Def \half \nl{J}(\nl{J}+1).
\en
If the conditions are replaced by
$\pr[|W-nc| \ge n\d/4] \Le \e_2^E$ and $|J| \le n\d/12$, then
\eqa
   (ii)\ &&\Bigl|\ex\Bigl\{ E_2(W,J,h) I[|W-nc| \le n\d/3] \Bigr\}  \Bigr| \non\\
      && \quad \Le  \|\D^2 h\|_{n\d/2,\infty} (C^{(1)}_{\ref{ADB-Newton}}(J)\e_1 + C^{(2)}_{\ref{ADB-Newton}}(J)\e^E_2), 
    \phantom{XXXXXXXXXX}
    \non
\ena
where $\|\D^2 h\|_{n\d/2,\infty}$ is as defined in~\Ref{ADB-norm-E-defs}.
\end{lemma}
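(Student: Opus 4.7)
The strategy is to represent $E_2(W,J,h)$ as a sum of ``discrete third differences'' of~$h$, and then apply Lemma~\ref{ADB-Newton-truncation} to each term with $f := \D^2_{ab} h$ for the appropriate pair $(a,b)$. For part~(i) the truncation is in the $\nS{\cdot}$-norm, and the bound on $\|\D^2 h\|^\S_{n\d/2,\infty}$ naturally appears on the right. Part~(ii) is entirely analogous, using an Euclidean-norm version of Lemma~\ref{ADB-Newton-truncation} (obtained by the same proof with $|\cdot|$ replacing $\nS{\cdot}$), so I focus on part~(i).

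The first step is the algebraic identity. I would decompose
$$J \Eq \sum_{k=1}^{\nl{J}} \eta_k e^{(j_k)},\qquad \eta_k \in \{\pm 1\},$$
set $U_0:=0$, $U_k := \sum_{l=1}^k \eta_l e^{(j_l)}$, and write
$h(W+J)-h(W) = \sum_k [h(W+U_k)-h(W+U_{k-1})] = \sum_k \eta_k \D_{j_k} h(W+V_k)$,
where $V_k := U_{k-1}$ if $\eta_k=1$ and $V_k := U_k$ if $\eta_k=-1$. Applying the same telescoping to each $\D_{j_k} h(W+V_k) - \D_{j_k} h(W)$ produces, via $\D^2_{j_k j_l} h(W+V'_l)$-terms, a double sum which I would further split as $\D^2_{ab} h(W) + [\D^2_{ab} h(W+\text{shift}) - \D^2_{ab} h(W)]$. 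The ``constant'' contribution consists of $\sum_{l<k}\eta_k\eta_l \D^2_{j_k j_l} h(W)$ together with the boundary contribution $\sum_{k\colon\eta_k=-1} \D^2_{j_kj_k} h(W)$ arising from the shift inside $V_k$. Using
$J^T \D^2 h(W)J = \sum_k \D^2_{j_k j_k} h(W) + 2\sum_{l<k}\eta_k\eta_l \D^2_{j_k j_l} h(W)$
and the identity $\sum_k \mathbf 1[\eta_k=-1] = \sum_j \half(|J_j| - J_j)$, one obtains
$$ \sum_{l<k}\eta_k\eta_l \D^2_{j_k j_l} h(W) + \sum_{k\colon \eta_k=-1}\D^2_{j_k j_k} h(W) \Eq \half J^T \D^2 h(W) J - \half \sum_j J_j \D_{jj} h(W), $$
exactly the combination subtracted in~\Ref{Sept-E2-def}. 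Hence $E_2(W,J,h)$ is precisely the residual $R_3(W)$, a sum of at most $\binom{\nl{J}}{2}+\nl{J}$ terms of the form $\pm[\D^2_{ab} h(W+V) - \D^2_{ab} h(W)]$, where each shift $V$ arises as a partial sum of the $\eta_l e^{(j_l)}$ and therefore satisfies $\nl{V} \le \nl{J}$ and $\nS{V} \le \nS{J}$ (after ordering the decomposition so that partial sums grow monotonically in $\ell_1$-norm).

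Given this representation, I would apply Lemma~\ref{ADB-Newton-truncation} with $f := \D^2_{ab} h$, $X := 0$, $U := V$; the hypothesis $\nS{V} \le n\d/12$ follows from $\nS{J} \le n\d/12$ by the choice of decomposition. Each term contributes at most $(\e_1 \nl{V} + \e_2) \|\D^2 h\|^\S_{n\d/2,\infty}$. Summing the $\e_1$-coefficients gives
$\sum_{k=1}^{\nl J}\sum_{l=1}^{k-1}\nl{V'_l} + \sum_{k\colon\eta_k=-1}\nl{V_k} \Le \sum_{k=1}^{\nl J}\binom{k}{2} + \sum_{k=1}^{\nl J} k \Eq \binom{\nl{J}+2}{3}
 \Eq \sixth \nl{J}(\nl{J}+1)(\nl{J}+2),$
matching~$C^{(1)}_{\ref{ADB-Newton}}(J)$; summing the $\e_2$-coefficients gives $\binom{\nl J}{2} + \nl J = \binom{\nl J+1}{2} = C^{(2)}_{\ref{ADB-Newton}}(J)$.

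The main obstacle is the algebraic bookkeeping: the correction in the definition of $E_2$ involves $J_j$ and not $|J_j|$, so one must track the asymmetry between $\eta_k=+1$ and $\eta_k=-1$ carefully through both telescoping levels. The ``extra'' diagonal contribution $\sum_{k\colon\eta_k=-1}\D^2_{j_kj_k}h(W)$ in the second telescoping is precisely what converts $\half\sum_j |J_j| \D_{jj} h(W)$ into $\half\sum_j J_j \D_{jj} h(W)$; missing it would leave a non-vanishing discrete correction term that does not appear on the left-hand side. A secondary technical point is the choice of decomposition ensuring that partial sums $V_k, V'_l$ satisfy the $\nS{\cdot}$-hypotheses of Lemma~\ref{ADB-Newton-truncation}; ordering positive and negative steps monotonically sidesteps any loss of norm control.
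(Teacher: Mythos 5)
Your proposal is correct in substance but organises the argument differently from the paper. The paper proves part~(i) by induction on the number of non-zero coordinates of~$J$: a one-dimensional base case $J=k\ej$, handled by the explicit double telescoping sums \Ref{ADB-1D} and~\Ref{ADB-1D-neg}, followed by an induction step that splits off the last non-zero coordinate, applies the one-dimensional result at the shifted point $W+J^{(r-1)}$, and checks that the constants $C^{(1)},C^{(2)}$ are superadditive under this splitting. You instead expand $J$ into $\nl{J}$ signed unit steps and telescope twice globally, which exhibits $E_2(W,J,h)$ at once as a signed sum of terms $\D^2_{ab}h(W+V)-\D^2_{ab}h(W)$; the bookkeeping identity you display, showing that the ``constant'' part of the expansion is exactly $\half J^T\D^2h(W)J-\half\sum_j J_j\D_{jj}h(W)$ (with the $\eta_k=-1$ diagonal terms converting $|J_j|$ into $J_j$), is the same cancellation that drives the paper's induction, and your term counts $\binom{\nl{J}+2}{3}$ and $\binom{\nl{J}+1}{2}$ reproduce $C^{(1)}_{\ref{ADB-Newton}}(J)$ and $C^{(2)}_{\ref{ADB-Newton}}(J)$. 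Both proofs ultimately rest on term-by-term application of Lemma~\ref{ADB-Newton-truncation}; your route avoids the induction and the superadditivity check, at the cost of heavier notation for the shifts.

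One justification in your write-up is wrong as stated: you claim that, by ordering the unit steps so that the partial sums grow monotonically in $\ell_1$-norm, each shift satisfies $\nS{V}\le\nS{J}$. The norm $\nS{\cdot}$ is not coordinate-wise monotone, so this fails in general: with $d=2$ and $\S^{-1}$ having a large negative off-diagonal entry, $J=(1,1)^T$ has $\nS{J}$ much smaller than $\nS{(1,0)^T}$, yet $(1,0)^T$ is a partial sum of any ordering. (For part~(ii) your argument is fine, since the Euclidean norm \emph{is} coordinate-wise monotone.) The repair is harmless: bound $\nS{V}\le\sum_j|J_j|\,\nS{\ej}\le\nl{J}\max_j\nS{\ej}$ and impose this quantity, rather than $\nS{J}$, to be at most $n\d/12$ — which is exactly the kind of slack available in the applications (there $J\in\JJ$, so all shifts are of order $\JmaxS$ or $d^{1/2}\Jmax\sqrt{\lmax(\S^{-1})}$, negligible compared with $n\d$). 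Note that the paper's own induction step uses shifts such as $J^{(s-1)}+l\,e^{(s)}$ whose $\S$-norms are likewise not dominated by $\nS{J}$, so this point needs the same (unstated) slack there; it is a defect of the hypotheses' phrasing rather than of your method.
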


\begin{remark}\label{ADB-C(J)}
{\rm
 Note that, because $0 \neq J \in \Z^d$, we have
\[
    C^{(1)}_{\ref{ADB-Newton}}(J) \Le \nl{J}^3 \Le d^{3/2}|J|^3;\quad C^{(2)}_{\ref{ADB-Newton}}(J) \Le 2d|J|^2.
\]
}
\end{remark}

\medskip
Lemma~\ref{ADB-Newton} allows us to prove the following reduction theorem,
useful for approximating the generator of any Markov jump process
satisfying our general assumptions.

\begin{theorem}\label{ADB-generator-match}
Suppose that $(g^J,\,J\in\JJ)$, $c$, $A$, $\s^2$, $\g$, $\d_0$, $\AA\nud$ and~$\ABA_n$ are as 
in Sections \ref{Introduction} and~\ref{ADB-assumptions},
and that Assumptions G0--G4 are satisfied.  Suppose that~$W$ is a random
vector in~$\Z^d$, such that, for some $\e,v > 0$, 
\eq\label{new-4.6}
  \begin{array}{rl}
   {\rm (i)}& \ex\nS{W -nc}^2 \Le dvn;\\[1ex]
   {\rm (ii)}& \dtv(\law(W),\law(W+\ej)) \Le \e, \mbox{ for each } 1\le j\le d. 
  \end{array} 
\en
Then, for any $\d \le \d_0/\sqrt{\lmax(\S)}$ and for any $0 < \d' \le \d/2$, and for
$n \ge \max\{n_{\Ref{Oct-n2.7-def}},\smh^{-1}(\d/2)\}$,
\eqa
  \leqn{|\ex\{(\AA\nud h(W) - \ABA_n h(W))I[\nS{W-nc} \le n\d'/3]\}|} \non\\
 &\le& d^{5/2}\Lbar\Bigl( \half L_2 \lmax(\S) v\|\D h\|^\S_{n\d/4,\infty} \label{ADB-new-4.6}\\
 &&\mbox{} + n\|\D^2 h\|^\S_{n\d/4,\infty}\bigl\{ L_1 \adbr{\sqrt{v\lmax(\S)}}n^{-1/2} 
              + d^{1/2}(\gbar/\Lbar)\e + 32d^{1/2} v/\{n(\d')^2\}\bigr\} \Bigr),\non
\ena 
where $\|\D h\|^\S_{n\h,\infty}$
and $\|\D^2 h\|^\S_{n\h,\infty}$ are bounded in~\Ref{ADB-bounds-on-h}. 
\end{theorem}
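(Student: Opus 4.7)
My strategy is to rewrite $\AA\nud h - \ABA_n h$ as a sum of four residuals produced by a discrete Taylor expansion of $h(X+J) - h(X)$, and to bound each residual using the Lipschitz behaviour of $F$ and~$\s^2$, assumption~(i), Chebyshev's inequality, assumption~(ii) and Lemma~\ref{ADB-Newton}. Since $\d' \le \d/2$ and $n \ge n_{\Ref{Oct-n2.7-def}}$, the truncated rate $g^J_\d(n^{-1}W)$ coincides with $g^J(n^{-1}W)$ on the event $\{\nS{W-nc} \le n\d'/3\}$. Rearranging~\Ref{Sept-E2-def} gives
\[
 h(X+J) - h(X) \Eq \D h(X)^T J + \tfrac12 J^T \D^2 h(X) J + E_2(X,J,h) - \tfrac12 \sum_j J_j \D_{jj} h(X),
\]
and multiplying by $n g^J(n^{-1}X)$, summing over $J \in \JJ$ (using $\sJJ g^J(x) J = F(x)$ and $\sJJ g^J(x) JJ^T = \s^2(x)$), and then subtracting $\ABA_n h(X)$ delivers the decomposition $\AA\nud h - \ABA_n h = T_1 + T_2 + T_3 + T_4$, with
\[
 T_1 \Def \D h(X)^T\bigl[nF(n^{-1}X) - A(X-nc)\bigr],\quad T_2 \Def \tfrac{n}{2}\tr\bigl([\s^2(n^{-1}X)-\s^2]\D^2 h(X)\bigr),
\]
\[
 T_3 \Def n\sJJ g^J(n^{-1}X) E_2(X,J,h),\quad T_4 \Def -\tfrac{n}{2}\sum_j F_j(n^{-1}X)\D_{jj}h(X).
\]

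The terms $T_1$, $T_2$ and $T_4$ are routine. For $T_1$, I invoke~\Ref{ML-F-approx} at $y=c$ to obtain $|nF(n^{-1}X) - A(X-nc)| \le d\Lbar L_2 \lmax(\S) n^{-1}\nS{X-nc}^2$, and combine with $|a^T b| \le \sqrt d \|a\|_\infty |b|$ and assumption~(i) to produce the $L_2 \lmax(\S) v$-contribution. For $T_2$, I expand $\s^2(n^{-1}X) - \s^2 = \sJJ [g^J(n^{-1}X) - g^J(c)] JJ^T$, bound $|J^T \D^2 h(X) J| \le |J|_1^2 \|\D^2 h\|_\infty \le d|J|^2 \|\D^2 h\|_\infty$, use the Lipschitz estimate $|g^J(n^{-1}X) - g^J(c)| \le L_1 g^J(c) n^{-1}|X-nc|$ together with $\sJJ g^J(c)|J|^2 = d\Lbar$, and then apply Cauchy--Schwarz with assumption~(i) to produce the $L_1\sqrt{v\lmax(\S)}n^{-1/2}$-contribution. $T_4$ is handled by the same ideas via $|\sum_j F_j(n^{-1}X)\D_{jj}h(X)| \le |F(n^{-1}X)|_1 \|\D^2 h\|_\infty$, the estimate $|F(n^{-1}X)|_1 \le \sqrt d\|A\|n^{-1}|X-nc| + d\Lbar L_2 n^{-2}|X-nc|^2$, and the fact that $\|A\|/\Lbar \in \KKA$; this is absorbed into the $L_1$- and $L_2$-terms already produced.

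The bulk of the work, and the main obstacle, lies in $T_3$. I split $g^J(n^{-1}W) = g^J(c) + [g^J(n^{-1}W) - g^J(c)]$, which is essential because Lemma~\ref{ADB-Newton} controls only the expectation (not the absolute value) of $E_2(W,J,h)$. For the constant piece $n g^J(c) \ex\{E_2(W,J,h) I\}$, I apply Lemma~\ref{ADB-Newton}(i) with $\d$ replaced by $\d'$, using Chebyshev's inequality and assumption~(i) to verify $\pr[\nS{W-nc} \ge n\d'/4] \le \e_2 \le 16dv/(n(\d')^2)$; summation over $J$ via Remark~\ref{ADB-C(J)}, together with $\sJJ g^J(c)|J|^3 = d^{3/2}\gbar$ and $\sJJ g^J(c)|J|^2 = d\Lbar$, produces the $d^{1/2}(\gbar/\Lbar)\e$ and $32d^{1/2}v/(n(\d')^2)$ terms. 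For the Lipschitz piece I establish the pointwise estimate $|E_2(W,J,h)| \le 2|J|_1^2 \|\D^2 h\|^\S_{n\d/4,\infty}$ by telescoping $h(W+J) - h(W) - \D h(W)^T J$ into a double sum of second differences along a lattice path from $W$ to $W+J$ whose vertices lie in $B_{n\d/4,\S}(nc)$ (valid because $\d' \le \d/2$ and $n \ge n_{\Ref{Oct-n2.7-def}}$ via~\Ref{Dec-n2.7-implies}), combine with $|g^J(n^{-1}W) - g^J(c)| \le L_1 g^J(c) n^{-1}|W-nc|$ and Cauchy--Schwarz under assumption~(i), and observe that the result is subsumed by the $L_1$-contribution already produced in $T_2$. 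Summing $T_1$ through $T_4$ and applying the norm comparisons of~\Ref{Aug-norm-comparisons} then yields the right-hand side of~\Ref{ADB-new-4.6}, with all implied constants in $\KKA$.
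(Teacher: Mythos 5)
Your decomposition is a legitimate regrouping of the same ingredients the paper uses (Newton expansion of $h$, Taylor expansion of the rates about~$c$, $F(c)=0$, Lemma~\ref{ADB-Newton} plus Chebyshev for the remainder, hypothesis~(ii) for the $\e$-term), and your preliminary points (that $g^J_\d=g^J$ on $\{\nS{W-nc}\le n\d'/3\}$ for the stated range of~$n$, and the treatment of the constant-rate part of $T_3$) are fine. The gap is in the final accounting: the inequality to be proved has explicit constants, and your ``absorbed/subsumed'' claims do not hold. Concretely: (1) the Lipschitz piece of $T_3$, bounded via $|E_2(W,J,h)|\le 2\nl{J}^2\|\D^2h\|^\S_{n\d/4,\infty}$ and $|g^J(n^{-1}W)-g^J(c)|\le L_1g^J(c)n^{-1}|W-nc|$, by itself contributes about $2d^{5/2}\Lbar L_1\sqrt{v\lmax(\S)n}\,\|\D^2h\|^\S_{n\d/4,\infty}$, i.e.\ roughly twice the entire $L_1$-term on the right-hand side of~\Ref{ADB-new-4.6}, on top of the half-budget already spent in $T_2$ and in the $\|A\|$-part of $T_4$; (2) $T_4$ also leaves a contribution of order $d^{5/2}\Lbar L_2\lmax(\S)v\,\|\D^2h\|^\S_{n\d/4,\infty}$ (from the quadratic part of $F(n^{-1}W)$), which has no counterpart in the stated bound, whose $L_2$-term multiplies $\|\D h\|$, not $\|\D^2h\|$; and (3) your quoted form of~\Ref{ML-F-approx} drops the available factor $\tfrac12$, so $T_1$ alone already doubles the stated $\tfrac12 L_2\lmax(\S)v$ coefficient. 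As written, therefore, you prove an inequality of the same shape with strictly larger constants, not~\Ref{ADB-new-4.6} itself.

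The paper's proof avoids all three problems by grouping differently: it writes $g^J_\d(n^{-1}X)=g^J(c)+Dg^J(c)^Tn^{-1}(X-nc)+e_1$ and multiplies this by the Newton expansion of $h(X+J)-h(X)$, so that the remainder $e_2$ is weighted only by the constant rates $g^J(c)$ --- whence the diagonal correction $\tfrac12\sum_jJ_j\D_{jj}h(W)$ is added at zero cost because $\sJJ J_jg^J(c)=F_j(c)=0$ (your $T_4$, with weights $F_j(n^{-1}X)\neq0$, is exactly what this trick eliminates); the position-dependent part of the rate multiplies only the first-order remainder $h(X+J)-h(X)-\D h(X)^TJ$, giving the $L_1$-term once, with the stated coefficient; and the quadratic rate remainder $e_1$ multiplies only $h(X+J)-h(X)$, producing the $\tfrac12 L_2\lmax(\S)v\|\D h\|^\S_{n\d/4,\infty}$ term. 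If you rearrange your argument along these lines, each of your individual estimates goes through and yields~\Ref{ADB-new-4.6} exactly; with your present pairing of the varying rates against the full $E_2$, the best you can claim is the same bound with weakened constants, which is not what the statement asserts.
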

 
\begin{proof}
Consider 
\eqa
   \AA\nud h(X)
  &:=& n\sJJ g_\d^J(X/n)\{h(X+J) - h(X)\} \label{ADB-new-4.6a} \\
  &=& n\sJJ \{g^J(c) + Dg^J(c)^T n^{-1}(X - nc) + e_1(X,J,g_\d^J)\}  \non\\
            &&\qquad\times   \{\D h(X)^T J + \half J^T \D^2h(X) J + e_2(X,J,h)\}, \non
\ena
where
\[
   e_1(X,J,g_\d^J) \Def g_\d^J(X/n) - g^J(c) - n^{-1} Dg^J(c)^T (X-nc),
\]
and~$e_2$ is as in~\Ref{ADB-Newton-remainder}.
Observing that $\sJJ g^J(c) \D h(X)^T J = \D h(X)^T F(c) = 0$, because~$F(c) = 0$, and that
\eqs
   &&\sJJ g^J(c) J^T \D^2h(X) J \Eq \tr\{\s^2 \D^2h(X)\};\\
   &&\sJJ Dg^J(c)^T n^{-1}(X - nc) \D h(X)^T J \Eq n^{-1}\tr\{A (X-nc)\D h(X)^T\},
\ens
it follows, once again writing $I\nuh(W) := I[\nS{W-nc} \le n\h/3]$, that
\eqa
  \lefteqn{|\ex\{(\AA\nud h(W) - \ABA_n h(W))I\nudd(W)\}|} \non\\
  &\le& \!\! n\ex\Blb \sJJ |e_1(W,J,g_\d^J)| \,|h(W+J) - h(W)|I\nudd(W)\Brb \non\\
  && \mbox{}  
    \!\!\!\!\! + \ex\Blb\sJJ |Dg^J(c)^T (W - nc)|\,|h(W+J) - h(W) - \D h(W)^T J|I\nudd(W)\Brb \non\\
  && \mbox{}\!\!\!\!\! + n\Bigl|\ex\Bigl\{\sJJ g^J(c) e_2(W,J,h)I\nudd(W)\Bigr\}\Bigr|.  
      \label{ADB-Taylor-parts}
\ena

Now, from~\Ref{ADB-L-defs}, and recalling~\Ref{Aug-cal-X-def}, it follows that,
for $X \in \dnud(J)$,
\eqa
  |e_1(X,J,g_\d^J)| &=& |g_\d^J(X/n) - g^J(c) - Dg^J(c)^T n^{-1}(X - nc)| \non\\
   &\le& 
           \half n^{-2}|X-nc|^2 L_2 g^J(c), \label{Aug-e1-bnd}
\ena
provided that $\d \sqrt{\lmax(\S)} \le \d_0$.
Since, for $\nS{X-nc} \le n\d'/3 \le n\d/6$ and $n\d/12 > \JmaxS$, we have 
$$
   \nS{X+J-nc} \Le \JmaxS + \nS{X-nc} \Le n\d/4,
$$
it follows, for such~$X$ and~$n$, that $|h(X+J) - h(X)| \le \nl{J}\,\|\D h\|^\S_{n\d/4,\infty}$ and
that~\Ref{Aug-e1-bnd} is satisfied.  Hence, since $\ex\nS{W-nc}^2 \le dvn$, we have
\eqs
    \lefteqn{n\ex\Blb \sJJ |e_1(W,J,g_\d^J)| \,|h(W+J) - h(W)|\,I\nudd(W)\Brb}\\
   &\le& \half L_2 \sJJ g^J(c) \nl{J}\,\|\D h\|^\S_{n\d/4,\infty} n^{-1}\ex|W-nc|^2  \\
   &\le& \half L_2 \sJJ g^J(c) \nl{J}\,\|\D h\|^\S_{n\d/4,\infty} dv \lmax(\S) \\
    &\le& \half L_2 v d^{5/2}\Lbar \lmax(\S)\,\|\D h\|^\S_{n\d/4,\infty}, 
\ens
if $n\d/12 > \JmaxS $, and this condition is satisfied for
$n \ge \max\{n_{\Ref{Oct-n2.7-def}},\smh^{-1}(\d/2)\}$.

Then, from~\Ref{ADB-first-difference} and~\Ref{ADB-L-defs}, if $n\ge n_{\Ref{Oct-n2.7-def}}$
and $\nS{X-nc} \le n\d'/3$, 
\eqs
  \lefteqn{\sJJ |Dg^J(c)^T (X - nc)|\,|h(X+J) - h(X) - \D h(X)^T J|\,I\nudd(X)} \\
          &&\Le \half L_1 |X-nc|\,\|\D^2 h\|^\S_{n\d/4,\infty} \sJJ g^J(c) \nl{J}(\nl{J}+1)
                   \phantom{XXXXXXXX} \\
          &&\Le d^2\Lbar L_1 |X-nc|\,\|\D^2 h\|^\S_{n\d/4,\infty};
\ens
hence, since  $\ex\nS{W-nc} \le \sqrt{dv n}$,
\eqs
   \lefteqn{\ex\Blb \sJJ |Dg^J(c)^T (W - nc)|\,|h(W+J) - h(W) - \D h(W)^T J|\,I\nudd(W)\Brb}\\
      && \Le d^{5/2}\Lbar L_1 \adbr{\sqrt{v\lmax(\S)n}}\,\|\D^2 h\|^\S_{n\d/4,\infty}.\phantom{XXXXXXXXXXXXXXXXXXX}
\ens
  Finally, from Lemma~\ref{ADB-Newton} and Chebyshev's inequality, 
\eqs
   \lefteqn{n\left|\ex\Blb \sJJ g^J(c) \Bl e_2(W,J,h) 
               + \half \sum_{j=1}^d  J_j \D_{jj}h(W)\Br I\nudd(W) \Brb \right|}\\
     &&\Le \sJJ g^J(c)  n\|\D^2 h\|^\S_{n\d/4,\infty}(C^{(1)}_{\ref{ADB-Newton}}(J)\e 
                          + 16C^{(2)}_{\ref{ADB-Newton}}(J)dv/\{n(\d')^2\}) \phantom{XXX} \\
     &&\Le  n\|\D^2 h\|^\S_{n\d/4,\infty}(d^{3}\gbar\e + 32d^3\Lbar v/\{n(\d')^2\}),
\ens
and, for each~$j$, $\sJJ J_j g^J(c) = 0$, because~$F(c)=0$.
This completes the proof of the theorem.
\end{proof}

\begin{remark}\label{Almost-MPP}
\adbr{
{\rm
 Suppose that processes~$(X_n,\,n\ge0)$ are {\it almost\/} density dependent, in that
they have rates as in~\Ref{ADB-transition-rates}, but with $g^J(x)$ being replaced by~$g^J(x,n)$, where 
$\lim_{n\to\infty}n^{1/2}\e\nud = 0$ for all $\d > 0$ small enough, where 
$\e\nud := \sup_{|x-c| \le \d}\sJJ |g^J(x,n) - g^J(x)|$.  Then it is immediate from~\Ref{ADB-new-4.6a}
that, if Assumptions G0--G4 hold for the transition rates~$g^J(x)$, then the conclusion of 
Theorem~\ref{ADB-generator-match} continues to hold, with the
limiting definitions of $c$, $A$ and~$\s^2$, provided only that an asymptotically small term
\[
      d\Lbar \{n^{1/2}\e\nud\} \, n^{1/2}\|\D h\|^\S_{n\d/4,\infty}
\]
is added to the bound given in~\Ref{ADB-new-4.6}.  In particular, if~$n\e\nud$ is bounded as $\nti$,
the asymptotic order of the error bound is not increased.
}
}
\end{remark}

\subsection{Total variation approximation}\label{ADB-first-TV}
We are now in a position to prove Theorem~\ref{ADB-first-approx-thm}, which gives a measure of the error
in the approximation of the distribution of a random vector~$W$ in $\Z^d$
by the distribution~$\Pi\nud$, if the process~$X\nud$ satisfies the special
assumptions of Section~\ref{ADB-special}.   The statement of
Theorem~\ref{ADB-first-approx-thm} is to some extent complicated by the presence of
the indicator truncating the range of~$W$ in the main condition~(iii).  The
truncation is necessary, because Theorem~\ref{ADB-h-bounds} only enables one to bound the
differences of the functions~$h_B$ solving the Stein equation~\Ref{ADB-Stein-eqn} in
balls of radius~$n\d/4$, for any $\d \le \d_0/3\sqrt{\lmax(\S)}$.

\begin{theorem}\label{ADB-first-approx-thm}
\adbr{
Given any $c\in\re^d$ and $d\times d$ matrices $A$ and~$\s^2$, with~$A$ having
eigenvalues all with negative real parts, and~$\s^2$ being positive definite, 
there exists a sequence of elementary processes $(X_n,\,n\ge1)$, 
given in Theorem~\ref{ADB-MPP-A-too}, satisfying
Assumptions G0, G1 and S2--S4 for $\d_0 = \lmin(\s^2)/(8\|A\|) > 0$,  having $F(c)=0$, 
$DF(c)=A$ and $\s^2$ given by~\Ref{ADB-sigma2-def}.  Let~$\S$ be as in~\Ref{ADB-Sigma-eqn}.}
Define $\td_0 := \min\{\adbr{1},\d_0/3\sqrt{\lmax(\S)}\}$ and $\Lbar := \lbar(\s^2)$, and suppose that 
$\d' \le \td_0/2$.
Then, for any~$v > 0$, there exists a constant $C_{\ref{ADB-first-approx-thm}}(v,\d')$, which is a 
function of~$v$, $\d'$, $\|A\|/\Lbar$ and the elements 
of~$\Sp'(\s^2/\Lbar)$ and~$\Sp'(\S)$, with the following property:
if~$W$ is any random vector in $\Z^d$ such that, for some $n \ge \max\{n_{\ref{ML-TV}},\smh^{-1}(\d')\}$
and for some $\e_1,\e_{20},\e_{21},\e_{22} > 0$,
$$
\begin{array}{rl}
 {\rm (i)}& \ex\nS{W -nc}^2 \Le dvn;\\[1ex]
 {\rm (ii)}& \dtv(\law(W),\law(W+\ej)) \Le \e_1, \mbox{ for each } 1\le j\le d;\\[1ex]
 {\rm (iii)}&  |\ex\{\ABA_n h(W)I[\nS{W-nc} \le n\d'/3]\}| \\
          &\qquad\Le \lbar(\s^2)(\e_{20}\|h\|^\S_{n\td_0/4,\infty} + \e_{21}n^{1/2}\|\D h\|^\S_{n\td_0/4,\infty}
                + \e_{22}n\|\D^2 h\|^\S_{n\td_0/4,\infty}),
\end{array}
$$
where $\ABA_n$ is as defined in~\Ref{ADB-approx-gen},
then, for any~$\d$ such that $2\d' \le \d \le \td_0$, 
\[
    \dtv(\law(W),\Pi\nud) \Le C_{\ref{ADB-first-approx-thm}}(v,\d')(d^3n^{-1/2}+ \adbr{d^4}\e_1
       +\e_{20} + d^{1/4}\e_{21} +d^{1/2}\e_{22})\log n .
\]
\end{theorem}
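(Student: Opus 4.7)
The strategy is to apply the Stein equation machinery. By Theorem~\ref{ADB-MPP-A-too}, the elementary process $(X_n)$ with the required $c$, $A$, $\s^2$ exists and satisfies Assumptions G0, G1, S2--S4, so all tools developed in Section~\ref{ADB-special} are available, and~$\Pi\nud$ exists on~$\tB_{n,\d}(c)$. Starting from~\Ref{ADB-Stein-este} with our chosen $\d$ and~$\d'$, I would write
\[
 \dtv(\law(W),\Pi\nud) \Le \sup_{B\subset\tB_{n,\d}(c)} |\ex\{\AA\nud h_B(W)\,I[\nS{W-nc} \le n\d']\}|
         + \pr[\nS{W-nc} > n\d'].
\]
The second term is immediately controlled by Chebyshev's inequality together with hypothesis~(i), giving a bound of order $dv/\{n(\d')^2\}$. (In the statement, $\d'$ is a constant, so this is $O(1/n)$.)

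The main work is to bound the expectation inside the supremum. The plan is to insert and subtract~$\ABA_n h_B(W)$, so that
\eqs
 \leqn{|\ex\{\AA\nud h_B(W)\,I[\nS{W-nc} \le n\d'/3]\}|} \\
  &\le& |\ex\{(\AA\nud h_B(W) - \ABA_n h_B(W))\,I[\nS{W-nc} \le n\d'/3]\}| \\
  &&\mbox{}\qquad + |\ex\{\ABA_n h_B(W)\,I[\nS{W-nc} \le n\d'/3]\}|.
\ens
The first summand is directly controlled by Theorem~\ref{ADB-generator-match}, applied with~$\d = \td_0$, using hypotheses (i) and~(ii) for the parameters $v$ and~$\e = \e_1$. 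This yields a bound involving $\|\D h_B\|^\S_{n\td_0/4,\infty}$ and $n\|\D^2 h_B\|^\S_{n\td_0/4,\infty}$. The second summand is bounded by hypothesis~(iii) with the same norms.

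At this point I plug in the explicit norm bounds supplied by Theorem~\ref{ADB-h-bounds} (restated in~\Ref{ADB-bounds-on-h}):
\eqs
 &&\|h_B\|^\S_{n\td_0/4,\infty} \Le \lla_1^{-1}\k_0\log n,\qquad
    \|\D h_B\|^\S_{n\td_0/4,\infty} \Le \lla_1^{-1}\k_1 d^{1/4}n^{-1/2}\log n,\\
 &&\|\D^2 h_B\|^\S_{n\td_0/4,\infty} \Le \lla_1^{-1}\k_2 d^{1/2}n^{-1}\log n,
\ens
valid since $n\ge\max\{n_{\ref{ML-TV}},\smh^{-1}(\d')\}$. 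Assembling the terms yields a bound of the form
\[
 C(v,\d')(d^3n^{-1/2} + d^4\e_1 + \e_{20} + d^{1/4}\e_{21} + d^{1/2}\e_{22})\log n,
\]
where the $d^3n^{-1/2}$ comes from combining the $d^{5/2}$ factor produced by Theorem~\ref{ADB-generator-match} with the $d^{1/2}$ factor from the $\|\D^2 h_B\|$ bound (plus the subdominant $\|\D h_B\|$ term), the $d^4\e_1$ term arises from $d^{5/2}\cdot n\|\D^2 h_B\|\cdot d^{1/2}$ in~\Ref{ADB-new-4.6}, and the remaining $\e_{2i}$ terms come straight from hypothesis~(iii) after substituting the norm bounds. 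All constants sit inside $C_{\ref{ADB-first-approx-thm}}(v,\d')$, which depends on $\|A\|/\Lbar$ and the spectral triples via $\KKA$, as ensured by Theorems \ref{ADB-MPP-A-too} and~\ref{ADB-h-bounds}.

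The main obstacle is bookkeeping: tracking the powers of~$d$ that accumulate when the bounds of Theorem~\ref{ADB-generator-match} (which carry $d^{5/2}$ factors and $\sqrt{v\lmax(\S)}$ factors) are multiplied by the $d^{1/4}$ and $d^{1/2}$ appearing in Theorem~\ref{ADB-h-bounds}, while verifying that the Chebyshev term and the $32d^{1/2}v/\{n(\d')^2\}$ term from~\Ref{ADB-new-4.6} are both absorbed into the $d^3n^{-1/2}\log n$ summand. Once these bookkeeping checks are done, the stated inequality follows directly, and $C_{\ref{ADB-first-approx-thm}}(v,\d')$ can be read off as the resulting continuous function of the basic parameters.
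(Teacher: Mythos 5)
Your proposal follows the paper's own proof essentially step for step: start from \Ref{ADB-Stein-este}, handle the tail by Chebyshev via hypothesis (i), split off $\ABA_n h_B$ and control the difference by Theorem~\ref{ADB-generator-match} (with $L_2=0$ and $\gbar \le d^{-1/2}\Lbar\Jmax$ for the $d^4\e_1$ term), use hypothesis (iii) for the $\ABA_n$ part, and substitute the bounds of Theorem~\ref{ADB-h-bounds}/\Ref{ADB-bounds-on-h}. The bookkeeping you defer is exactly what the paper carries out, so the plan is correct and matches the published argument (just keep the truncation level consistently at $\nS{W-nc}\le n\d'/3$ throughout).
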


\begin{proof}
From~\Ref{ADB-Stein-este}, we have
\eqs
  \leqn{\dtv(\law(W),\Pi\nud)} \\
 && \Le \sup_{B \subset \tB_{n,\d}(c)}  
            |\ex\{\AA\nud h_B(W) I[W \in \tB_{n,\d'/3}(c)]\}| + \pr[W \notin \tB_{n,\d'/3}(c)],
\ens
where $\tB_{n,\h}(c) := \Z^d \cap B_{n\h,\S}(nc)$ and $h_B := h_{B,n}^\d$ is as for~\Ref{ADB-Stein-eqn}.
The probability in the second term is at most $9dv/\{n(\d')^2\}$, by~(i) and Chebyshev's inequality.  
Then, for $2\d' \le \d \le \td_0$ and for $n \ge \max\{n_{\ref{ML-TV}},\smh^{-1}(\d')\}$, we can 
use~\Ref{ADB-bounds-on-h} in~(iii), giving
\eqs
    \leqn{ |\ex\{\ABA_n h_B(W)\}I[\nS{W-nc} \le n\d'/3]| }\\
   &\le& \lbar(\s^2)(\e_{20}\|h_B\|^\S_{n\td_0/4,\infty} + \e_{21}n^{1/2}\|\D h_B\|^\S_{n\td_0/4,\infty} 
              + \e_{22}n\|\D^2 h_B\|^\S_{n\td_0/4,\infty}) \\
   &\le& (\lbar(\s^2)/\lla_1) (\e_{20}\k_0 + \e_{21}\k_1 d^{1/4} + \e_{22}\k_2 d^{1/2})  \log n.
\ens
Finally, from Theorem~\ref{ADB-generator-match} and~\Ref{ADB-bounds-on-h} \adbr{and recalling
that $L_2=0$ for elementary processes,}
for $n \ge \max\{n_{\ref{ML-TV}},\smh^{-1}(\d')\}$ and $2\d' \le \d \le \td_0$, we have
\eqs
   \leqn{|\ex\{(\AA\nud h_B(W) - \ABA_n h_B(W)) I[W \in \tB_{n,\d'/3}(c)]\}|}\\
   &\le& d^{3}\log n\, \frac{\lbar(\s^2)}{\lla_1}\Blb \frac{\k_2 L_1 \adbr{\sqrt{v\lmax(\S)}}}{\sqrt n}
          + \frac{32\k_2 d^{1/2}v}{n(\d')^{2}} + \k_2 d^{1/2}(\gbar/\Lbar)\,\e_1 \Brb ,
\ens
\adbr{and noting that $\gbar \le d^{-1/2}\Lbar \Jmax$  completes} the proof of the theorem.
\end{proof}

\section{Application: approximating a Markov jump process}\label{Sect-MPP}
Suppose that~$(X_n,\,n\ge1)$ is a fixed sequence of Markov jump processes with 
$X_n(\cdot) \in n^{-1} \Z^d$ for some fixed~$d$, and with transition 
rates determined by the fixed collection of functions $(g^J\colon \re^d \to \re_+,\,J\in\JJ)$, 
satisfying Assumptions G0--G4 for some $c$ and~$\d_0$. Then, for large~$n$, $X_n$ has a quasi-equilibrium
behaviour near~$nc$, in the sense that the process, if started near~$nc$, remains
within any ball $\tB_{n,\d}(c)$ for a length of time whose expectation, for fixed $\d>0$,
grows exponentially with~$n$.  During this time, its behaviour is asymptotically extremely close
to that of~$X\nud$, the choice of $\d < \d_0$ having almost no effect:  see Barbour \&~Pollett~(2012,
Section~4).  Thus it has a quasi-equilibrium distribution that, as $n\to\infty$, is asymptotically extremely 
close to~$\Pi\nud$, for any $0 < \d < \d_0$.  \adbr{Theorem~\ref{ADB-MPP-approx-thm} below shows that~$\Pi\nud$, 
in turn, can be closely approximated by the equilibrium distribution~$\hPi\nud$ of an elementary
process.}

We begin by noting that the variance of~$\Pi\nud$ is of the correct order, satisfying Condition~(i)
of Theorem~\ref{ADB-first-approx-thm},  and that Condition~(ii) is also satisfied, with $\e_1 = O(n^{-1/2})$.
The proofs of these two results are in Section~\ref{MPP-appx}.
Since, for this application, all the data of the problem, apart from~$n$,
are fixed, we can simplify many statements to order expressions as $n\to\infty$.

\begin{lemma}\label{ADB-equilibrium-variance-bound}
Let~$X_n$ be a Markov jump process whose transition rates are given in~\Ref{ADB-transition-rates}, 
satisfying Assumptions G0--G4 for some $\d_0 > 0$, and 
let~$\d_{\ref{ML-drift-lemma-1}}$ and~$\d'_{\ref{ML-drift-lemma-1}}(d)$ be as in 
Lemma~\ref{ML-drift-lemma-1}. 
Then,  for any 
$0 < \d \le \min\{\d_{\ref{ML-drift-lemma-1}},\d'_{\ref{ML-drift-lemma-1}}(d)\}$, 
if $X\nud \sim \Pi\nud$, we have
\[
    \ex \nS{X\nud-nc}^2 \Eq O(n).
\]
\end{lemma}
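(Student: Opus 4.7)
The plan is to exploit stationarity together with the drift inequality from Lemma~\ref{ML-drift-lemma-1}. Since $X\nud$ is a Markov jump process on the finite state space $\tB_{n,\d}(c)$, and $\Pi\nud$ is its stationary distribution, Dynkin's formula applied to the bounded function $h_0(X) = \nS{X-nc}^2$ gives
\[
   \ex\{\AA\nud h_0(X\nud)\} \Eq 0 \quad\mbox{when}\quad X\nud\sim\Pi\nud.
\]
The strategy is then to split the expectation according to whether $X\nud$ is in the "drift region" where Lemma~\ref{ML-drift-lemma-1} applies, and to compensate using a uniform upper bound on $|\AA\nud h_0|$ in the complementary bounded region.

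Writing $A := \{X\in\tB_{n,\d}(c)\colon\,h_0(X) \ge K_{\ref{ML-drift-lemma-1}}^2 nd\}$, Lemma~\ref{ML-drift-lemma-1} gives
\[
    \AA\nud h_0(X)\,\bone_A(X) \Le -\lla_1\, h_0(X)\,\bone_A(X),
\]
as long as $\d\le\min\{\d_{\ref{ML-drift-lemma-1}},\d'_{\ref{ML-drift-lemma-1}}(d)\}$. On the complementary set $A^c$, the state $X$ satisfies $\nS{X-nc}^2 \le K_{\ref{ML-drift-lemma-1}}^2 nd$, so $|X-nc|^2 \le \lmax(\S)K_{\ref{ML-drift-lemma-1}}^2 nd$. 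Inspecting expression~\Ref{ML-A-Xsquared} (which is valid in the interior, and whose inequality form is preserved at the boundary because truncation eliminates only those transitions with $h_0(X+J)-h_0(X)>0$, making $\AA\nud h_0(X)$ even smaller), I obtain
\[
   |\AA\nud h_0(X)| \Le 2n\,\|\S^{-1/2}\|\,|F(x)|\,|X-nc| + n\tr(\S^{-1}\s^2(x)) \Eq O(n)
\]
uniformly in $X\in A^c$, since $|F(x)|$ is bounded on $B_{\d_0}(c)$ and $|X-nc| = O(\sqrt n)$ on this set.

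Combining these two estimates with $\ex\{\AA\nud h_0(X\nud)\}=0$ yields
\[
   \lla_1\,\ex\{h_0(X\nud)\bone_A(X\nud)\}
         \Le -\ex\{\AA\nud h_0(X\nud)\bone_A(X\nud)\}
         \Eq \ex\{\AA\nud h_0(X\nud)\bone_{A^c}(X\nud)\} \Eq O(n).
\]
Adding the trivial contribution $\ex\{h_0(X\nud)\bone_{A^c}(X\nud)\} \le K_{\ref{ML-drift-lemma-1}}^2 nd = O(n)$ gives $\ex h_0(X\nud) = O(n)$, as required.

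I do not expect any serious obstacle here: the whole argument is a standard Lyapunov/Foster--Tweedie calculation at equilibrium. The only point needing attention is to check that~\Ref{ML-A-Xsquared} yields a usable $O(n)$ upper bound on $|\AA\nud h_0|$ uniformly on $A^c$ (including the boundary annulus where the truncation in~\Ref{ADB-transition-rates-delta} is active), and this follows immediately from Assumptions G2 and~G3 together with the bound on $|X-nc|$ on $A^c$.
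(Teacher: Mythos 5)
Your overall route --- Dynkin's formula for the stationary $X\nud$, the drift inequality of Lemma~\ref{ML-drift-lemma-1} on the set $A$ where $\nS{X-nc}\ge K_{\ref{ML-drift-lemma-1}}\sqrt{nd}$, and a crude bound on the complement --- is exactly the paper's proof. However, the justification of your $O(n)$ bound on $A^c$ does not work as stated: if $|F(x)|$ is merely bounded on $B_{\d_0}(c)$ and $|X-nc| = O(\sqrt n)$, then the term $2n\,\|\S^{-1/2}\|\,|F(x)|\,|X-nc|$ is only $O(n^{3/2})$, and feeding that through your final display would give $\ex\nS{X\nud-nc}^2 = O(n^{3/2})$, which is not the claim. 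To recover $O(n)$ you must use that $F(c)=0$ together with the Lipschitz behaviour of~$F$ on $B_{\d_0}(c)$ (available under G2, e.g.\ via~\Ref{ML-F-approx} with $y=c$), giving $|F(x)| \le C\,|x-c| \Eq C\,n^{-1}|X-nc|$ on $A^c$; then the linear term is $O(|X-nc|^2) = O(n)$ there, and only the trace term $n\,\tr(\S^{-1}\s^2(x)) \Le n L_0 \tr(\s^2_\S)$ genuinely contributes at order~$n$.

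The paper sidesteps this point by working with the one-sided bound~\Ref{ML-first-drift-bound} on the whole ball: the negative definite quadratic term is simply dropped, the cubic ($L_2$) remainder is absorbed using $\d \le \d'_{\ref{ML-drift-lemma-1}}(d)$, and what is left is $\AA\nud h_0(X) \Le n L_0\tr(\s^2_\S) = O(nd)$ uniformly, which is all that is needed on $A^c$. Note also that only an \emph{upper} bound on $\AA\nud h_0$ on $A^c$ is required (you bound $\ex\{\AA\nud h_0(X\nud)\bone_{A^c}(X\nud)\}$ from above), so your concern about the truncation annulus is immaterial; indeed, for large $n$ the set $A^c$ lies well inside $\tB_{n,\d}(c)$, where~\Ref{ML-A-Xsquared} holds exactly. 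With the Lipschitz correction above inserted, your argument is a correct proof and coincides with the paper's.
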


\begin{proposition} \label{ADB-lema2} 
Under Assumptions G0--G4, if $X\nud \sim \Pi\nud$ for some fixed
$\d \le \min\{\d_{\ref{ML-drift-lemma-1}},\d'_{\ref{ML-drift-lemma-1}}(d)\}$,
then, for each $1\le j\le d$,
$$
   \dtv(\Pi\nud,\Pi\nud*\e_{\ej}) \Eq O(n^{-1/2}),
$$
where $\e_J$ denotes the point mass at~$J$ and~$*$ denotes convolution.
\end{proposition}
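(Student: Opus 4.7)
The plan is to combine Theorem~\ref{ADB-initial-displacement}, which controls how much the law of $X\nud(U)$ changes when the initial state is shifted by~$\ej$, with the stationarity of~$\Pi\nud$ and the variance bound of Lemma~\ref{ADB-equilibrium-variance-bound}. The key input is that, for each~$j$ and for any initial state $X_0$ with $\nS{X_0 - nc} \le n\d/2$ and any~$U$ bounded away from~$0$ by a constant of order~$1$,
\[
   \dtv\bigl(P_U(X_0,\cdot),\,P_U(X_0+\ej,\cdot)\bigr) \Le Cn^{-1/2},
\]
where~$P_U$ denotes the transition semigroup of~$X\nud$.

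First I would fix $U$ of order~$1$ (large enough that the bound of Theorem~\ref{ADB-initial-displacement} is of the form~$Cn^{-1/2}$ with no $U^{-1/2}$ blow-up) and integrate the above inequality against~$X_0 \sim \Pi\nud$. Because $\Pi\nud P_U = \Pi\nud$ by stationarity, and $(\Pi\nud * \e_{\ej}) P_U$ is the law of~$X\nud(U)$ started from~$X_0 + \ej$ with $X_0 \sim \Pi\nud$, this yields
\[
   \dtv\bigl(\Pi\nud,\,(\Pi\nud*\e_{\ej})P_U\bigr) \Le Cn^{-1/2} + \pr_{\Pi\nud}\{\nS{X_0-nc} > n\d/2\}.
\]
The tail probability is $O(1/n)$ by Chebyshev's inequality and Lemma~\ref{ADB-equilibrium-variance-bound}, which gives $\ex\nS{X-nc}^2 = O(n)$.

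The main obstacle is then to pass from the semigroup-smoothed estimate above to the raw bound $\dtv(\Pi\nud, \Pi\nud*\e_{\ej}) = O(n^{-1/2})$: the distribution $\Pi\nud*\e_{\ej}$ is not stationary, so in general it differs from $(\Pi\nud*\e_{\ej})P_U$, and TV contraction under~$P_U$ goes in the wrong direction to close the gap directly. To overcome this, I would invoke a Palm-type representation: at stationarity, the law of $X(t^-)$ conditional on a jump of type~$J \in \JJ$ at time~$t$ has density proportional to $\Pi\nud(x)g^J_\d(x/n)$, and by smoothness of $g^J$ (Assumption~G2) together with the concentration of~$\Pi\nud$ on a $\sqrt{n}$-scale ball around~$nc$, this Palm distribution lies within $O(n^{-1/2})$ of~$\Pi\nud$ in total variation. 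The post-jump law is its shift by~$J$, so $\Pi\nud$ and $\Pi\nud * \e_J$ are both within $O(n^{-1/2})$ of this common Palm shift, for every $J \in \JJ$. Finally, Assumption~G4 writes $\ej = \sum_{l=1}^{r(j)} J_l\uj$ as a bounded-length sum of elements of~$\JJ$, so telescoping the bound through the intermediate shifts $\sum_{l \le k} J_l\uj$ yields $\dtv(\Pi\nud, \Pi\nud*\e_{\ej}) = O(n^{-1/2})$, as required. The delicate point is controlling the Palm-density ratio uniformly, for which the explicit form $g^J_\d(x/n)/\ex_{\Pi\nud}\{g^J_\d(X/n)\} = 1 + O(n^{-1/2})$ follows from the Taylor expansion around~$c$ and the variance bound.
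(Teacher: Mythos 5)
Your first two steps (smoothing by the semigroup and using stationarity of the first marginal) are fine in spirit, but the bridge you build to remove the smoothing is where the argument breaks. The Palm computation only tells you that the \emph{pre}-jump Palm law $\Pi^J$ (with density proportional to $\Pi\nud(x)g^J_\d(x/n)$) is within $O(n^{-1/2})$ of $\Pi\nud$, hence that the post-jump law $\Pi^J*\e_J$ is within $O(n^{-1/2})$ of $\Pi\nud*\e_J$; it gives you nothing about the distance between $\Pi\nud$ itself and $\Pi^J*\e_J$, which is the other half you need for the triangle inequality. That missing half is not a stationarity fact: there is no per-jump-type balance identity saying the post-$J$-jump law is close to $\Pi\nud$. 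Indeed, in the simplest case (immigration--death, $J=+1$, constant birth rate) the Palm pre-jump law is \emph{exactly} $\Pi\nud$, so your missing claim reads $\dtv(\Pi\nud,\Pi\nud*\e_1)=O(n^{-1/2})$ --- precisely the statement to be proved. The Palm detour is therefore circular. A secondary problem is that Theorem~\ref{ADB-initial-displacement} is proved only for elementary processes (Assumptions S2--S4), whereas Proposition~\ref{ADB-lema2} concerns general processes satisfying G0--G4, for which $\ej$ need not even lie in $\JJ$ (cf.\ Remark~\ref{Dec-displacements-rk}); so the very first estimate you invoke is not available in the stated generality without reproving it.

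The paper's proof (Lemma~\ref{ADB-thdtvpi}) sidesteps your ``main obstacle'' entirely by never shifting the initial condition: it writes, by stationarity and convexity,
\[
   \dtv(\Pi\nud,\Pi\nud*\e_J)\ \le\ \sum_X \Pi\nud(X)\,
       \dtv\bigl(\law_X(X\nud(U)),\law_X(X\nud(U))*\e_J\bigr),
\]
truncates to $\nS{X-nc}\le n\d/2$ via Lemma~\ref{ADB-equilibrium-variance-bound}, and then, for each fixed start $X$, splits off a Poisson stream of $J$-jumps of rate $n\m_0^J$ (Assumption~G3). Shifting the terminal value by $J$ then amounts to incrementing that Poisson count by one, which costs $O((n\m_0^J U)^{-1/2})$, plus a Radon--Nikodym martingale correction of the same order; choosing $U$ of order one gives $O(n^{-1/2})$ for each $J\in\JJ$, and Assumption~G4 with the triangle inequality transfers this to $\e_{\ej}$. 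Because the shift is applied at time $U$ rather than time $0$, no ``de-smoothing'' step is ever needed; if you want to salvage your approach, you should restructure it along these lines rather than through a Palm identity.
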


We now give the approximation theorem.

\begin{theorem}\label{ADB-MPP-approx-thm}
\adbr{
Under the above assumptions on~$X_n$, there is a sequence of elementary processes~$\hX_n$ such that,
for any fixed $0 < \d < \min\{\d_0/\sqrt{\lmax(\S)},\td_0\}$, $\hX\nud$ has equilibrium 
distribution~$\hPi\nud$ satisfying 
\[
    \dtv(\Pi\nud,\hPi\nud) \Eq O(n^{-1/2}\log n),
\]
as $n\to\infty$, where~$\d_0$ is as in Assumptions G0--G4 for~$X_n$, and~$\td_0$ is as in 
Theorem~\ref{ADB-first-approx-thm} for~$\hX_n$.}
\end{theorem}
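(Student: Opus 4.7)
The plan is to construct an elementary process $\hat X_n$ by invoking Theorem~\ref{ADB-MPP-A-too} with the same $c$, $A$ and $\s^2$ that the given sequence $X_n$ provides via Assumption~G0 and equations~\Ref{ADB-A-def}, \Ref{ADB-sigma2-def}. Since $c$, $A$, $\s^2$ alone determine $\S$ via~\Ref{ADB-Sigma-eqn} and the approximate Stein operator $\ABA_n$ of~\Ref{ADB-approx-gen}, both the original and the elementary processes share the same $\ABA_n$; this is the bridge between them. I then apply Theorem~\ref{ADB-first-approx-thm} with $W := X\nud \sim \Pi\nud$ and with the elementary sequence $\hat X_n$ as the approximating family, aiming for a final bound of order $n^{-1/2}\log n$ (with $d$ fixed in the present application).

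Conditions~(i) and~(ii) of Theorem~\ref{ADB-first-approx-thm} are immediate: (i) follows from Lemma~\ref{ADB-equilibrium-variance-bound} with $v = O(1)$, and (ii) from Proposition~\ref{ADB-lema2} with $\e_1 = O(n^{-1/2})$. For condition~(iii), I exploit the equilibrium property: by Dynkin's formula applied to the bounded function $h_B$ on the finite state space $\tB_{n,\d}(c)$, one has $\ex\{\AA\nud h_B(W)\} = 0$. Writing $I\nudd(X) := I[\nS{X-nc}\le n\d'/3]$, decompose
\[
   \ex\{\ABA_n h_B(W) I\nudd(W)\} \Eq \ex\{(\ABA_n - \AA\nud) h_B(W)I\nudd(W)\}
   - \ex\{\AA\nud h_B(W)(1 - I\nudd(W))\}.
\]
The first term is bounded directly by Theorem~\ref{ADB-generator-match}, which applies because the original $X_n$ satisfies G0--G4 and $W$ satisfies the conditions \Ref{new-4.6} verified above; matching against the bounds~\Ref{ADB-bounds-on-h} yields contributions to $\e_{21}$ and $\e_{22}$ of order $n^{-1/2}\log n$, with $\e_{20}=0$.

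The main obstacle is the tail term $\ex\{\AA\nud h_B(W)(1 - I\nudd(W))\}$, because Chebyshev's inequality applied to $\ex \nS{W - nc}^2 = O(n)$ gives only $\pr[\nS{W - nc} > n\d'/3] = O(1/n)$, which against the crude estimate $|\AA\nud h_B(X)|\le 2n\L L_0 \|h_B\|_\infty = O(n\log n)$ would leave $O(\log n)$, far too large. To bypass this, I apply the exponential Lyapunov function $h_\th$ of Lemma~\ref{ML-drift-lemma-1}: since $h_\th$ is bounded on $\tB_{n,\d}(c)$ and $W \sim \Pi\nud$, Dynkin's formula again gives $\ex\{\AA\nud h_\th(W)\} = 0$, and combined with the drift inequality $\AA\nud h_\th(X) \le -\half n^{-1}\lla_1\th h_0(X) h_\th(X)$ outside an $O(\sqrt{n})$-ball (balanced by the bounded contribution from inside the ball) this forces $\ex h_\th(W) = \ex \exp\{n^{-1}\th_1 \nS{W-nc}^2\} = O(1)$; Markov's inequality then yields $\pr[\nS{W - nc} > n\d'/3] = O(e^{-c_0 n})$ for some $c_0 > 0$. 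Since $h_B$ is globally bounded by $O(\log n)$ on $\tB_{n,\d}(c)$ via the representation~\Ref{ADB-Poisson-eq} (using Theorem~\ref{ML-TV} for starting points with $\nS{X - nc} \le n\d/4$, and Lemma~\ref{ML-time-to-root-n} to reduce the general case to this one by the short first hitting time), the tail term is at most $O(ne^{-c_0 n}\log n)$, which is negligible. Inserting these estimates into Theorem~\ref{ADB-first-approx-thm} with $\e_1 = O(n^{-1/2})$, $\e_{20}=0$, and $\e_{21},\e_{22}=O(n^{-1/2}\log n)$ delivers $\dtv(\Pi\nud,\hPi\nud) = O(n^{-1/2}\log n)$, as required.
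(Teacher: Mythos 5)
Your proposal follows essentially the same route as the paper's own proof: construct $\hX_n$ via Theorem~\ref{ADB-MPP-A-too} from the same $c$, $A$ and~$\s^2$ (hence the same~$\ABA_n$), verify conditions (i) and~(ii) of Theorem~\ref{ADB-first-approx-thm} from Lemma~\ref{ADB-equilibrium-variance-bound} and Proposition~\ref{ADB-lema2}, and get~(iii) by combining $\ex\{\AA\nud h(W)\}=0$ (Dynkin) with Theorem~\ref{ADB-generator-match} plus an exponential concentration bound for~$\Pi\nud$; your Lyapunov argument with $h_{\th_1}$ is exactly the content of Lemma~\ref{ADB-lemma1}, which the paper cites at this point, and your use of the global $O(\log n)$ bound on~$h_B$ over $\tB_{n,\d}(c)$ for the tail term differs only cosmetically from the paper's device of setting $h$ to zero outside the $n\d/4$-ball. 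Two small bookkeeping caveats: $\ex\, h_{\th_1}(W)$ need only be $O(n)$ rather than $O(1)$ (the drift inequality controls $\ex\{h_0(W)h_{\th_1}(W)\}$ outside an $O(\sqrt{nd})$-ball), though the tail probability remains exponentially small, so nothing is lost; and the correct coefficients in condition~(iii) are $\e_{20},\e_{21},\e_{22} = O(n^{-1/2})$, the single $\log n$ factor being contributed by Theorem~\ref{ADB-first-approx-thm} itself through the bounds~\Ref{ADB-bounds-on-h} on~$h_B$ --- inserting $\e_{21},\e_{22}=O(n^{-1/2}\log n)$ into that theorem as you state would literally give $O(n^{-1/2}(\log n)^2)$ rather than the claimed order.
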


\begin{proof}
\adbr{
We apply Theorem~\ref{ADB-first-approx-thm}, using the elementary process~$\hX_n$,
given in Theorem~\ref{ADB-MPP-A-too}, that shares the same
$c$, $A$ and~$\s^2$, and hence the same~$\ABA_n$, as~$X_n$.
We show that, for $W \sim \Pi\nud$,  Conditions (i)--(iii) of Theorem~\ref{ADB-first-approx-thm} are satisfied,
with suitable choices of $v$, $\e_1$ and~$\e_{2l}$, $0\le l\le 2$. 
} 

Condition~(i) follows immediately for some $v > 0$ from
Lemma~\ref{ADB-equilibrium-variance-bound}, and  Condition~(ii) is implied by
Proposition~\ref{ADB-lema2}, with $\e_1 = O(n^{-1/2})$.  It then follows
from Theorem~\ref{ADB-generator-match} with $\d' = \d/2$ that, for any function~$h$, we have
\eqa
  \leqn{|\ex\{(\AA\nud h(W) - \ABA_n h(W))I[\nS{W-nc} \le n\d/6]\}|} \non\\
       && \Eq O\bigl(n^{-1/2}(n^{1/2}\|\D h\|^\S_{n\d/4,\infty} + n\|\D^2 h\|^\S_{n\d/4,\infty})\bigr).\label{MPP-1}
\ena
However, since~$\Pi\nud$ is the equilibrium distribution of~$X\nud$, it follows that $\ex\{\AA\nud h(W)\} = 0$.
Then, since $\nS{W-nc} \le n\d$ implies that $|W-nc| \le \d_0$, because $\d  < \d_0/\sqrt{\lmax(\S)}$, 
it follows that
\eqa
    \leqn{|\ex\{\AA\nud h(W) I[\nS{W-nc} > n\d/6]\}|} \non\\ 
         &&\Le n\sJJ |g^J|_{\d_0} \ex\{|h(W+J) - h(W)| I[\nS{W-nc} > n\d/6]\} \non\\
         &&\Le 2nL_0\L \|h\|_{n\d_0,\infty} \pr[\nS{W-nc} > n\d/6], \label{MPP-2}
\ena
where $L_0$, $\L$ and~$|\cdot|_\d$ are as in Section~\ref{ADB-assumptions}.
From Lemma~\ref{ADB-lemma1}, with $r=2$, $\pr[\nS{W-nc}> n\d/6] = O(n^{-2})$ as $n\to\infty$.
Then, since the left hand side of~\Ref{MPP-1} is unchanged if we set $h(X)=0$ for $\nS{X-nc} > n\d/4$,
provided that $\JmaxS \le n\d/12$, we can replace $\|h\|_{n\d_0,\infty}$ by $\|h\|^\S_{n\d/4,\infty}$ in~\Ref{MPP-2} for
all~$n$ sufficiently large.  These two observations imply, with~\Ref{MPP-2}, that
\[
   |\ex\{\AA\nud h(W) I[\nS{W-nc} \le n\d/6]\}| \Eq O(n^{-1}\|h\|^\S_{n\d/4,\infty}).
\]
Combining this with~\Ref{MPP-1}, it follows that
\eqs
   \lefteqn{|\ex\{\ABA_n h(W)I[\nS{W-nc} \le n\d/6]\}|} \\
    && \Eq O\bigl(n^{-1/2}(\|h\|^\S_{n\d/4,\infty}
             +  n^{1/2}\|\D h\|^\S_{n\d/4,\infty} + n\|\D^2 h\|^\S_{n\d/4,\infty})\bigr),
\ens
which in turn implies that Condition~(iii) of Theorem~\ref{ADB-first-approx-thm} is satisfied,
with $\e_{20},\e_{21}$ and $\e_{22}$ all of order $O(n^{-1/2})$, proving the result. 
\end{proof}

\adbr{
It is shown in Part~II, Theorem~2.3, that the equilibrium distributions of elementary processes are at
distance $O(n^{-1/2}\log n)$ in total variation from discrete normal distributions. The
theorem above thus extends this to the quasi-equilibrium distributions of very general \MPP es.
However, other equally explicit approximations may be available.  For example, consider the
bivariate immigration--death process~$X_n$ with immigration rates $n\a_1$ for a single type~$1$ individual,
$n\a_2$ for a single type~$2$ individual, and~$n\a_{12}$ for a pair with one of each type.
Assume that individuals have independent exponentially distributed lifetimes, with mean
$1/\m_i$ for type~$i$.   This process has equilibrium distribution~$\Pi_n := \law(N_1+N_3,N_2+N_3)$,
where $N_1, N_2$ and~$N_3$ are independent Poisson random variables with means
\[
   \ex N_1 \Eq \frac n{\m_1}\Bl \a_1 + \frac{\a_{12}\m_2}{\m_1+\m_2} \Br;\quad
   \ex N_2 \Eq \frac n{\m_2}\Bl \a_2 + \frac{\a_{12}\m_1}{\m_1+\m_2} \Br;\quad
   \ex N_3 \Eq \frac{n\a_{12}}{\m_1+\m_2}.
\]
It is then easy to show that, for any $\d > 0$, $\dtv(\Pi_n,\Pi_n(\d)) = O(n^{-1})$, where~$\Pi_n(\d)$
denotes the equilibrium distribution of the restriction of~$X_n$ to an $n\d$-ball around its mean
$n\hc$, given by $\hc := \bigl(\m_1^{-1}(\a_1 + \a_{12}),\m_2^{-1}(\a_2 + \a_{12})\bigr)^T$.
Taking any $a := (a_1,a_2)^T > -\hc$, and then translating~$X_n$ by $(\lfloor na_1 \rfloor,\lfloor na_2 \rfloor)^T$
(the integer parts are needed, to stay in~$\Z^2$), we obtain an almost density dependent \MPP~$\tX_n$
satisfying Assumptions G0--G4, with $\JJ := \{(1,0),(0,1),(1,1),(-1,0),(0,-1)\}$ and
\eqs
   g^{(1,0)}(x,n) &=& \a_1;\quad g^{(0,1)}(x,n) \Eq \a_2;\quad g^{(1,1)}(x,n) \Eq \a_{12}; \\
   g^{(-1,0)}(x,n) &=& \m_1(x_1 - n^{-1}\lfloor na_1 \rfloor);\quad 
           g^{(0,-1)}(x,n) \Eq \m_2(x_2 - n^{-1}\lfloor na_2 \rfloor),
\ens
having equilibrium distribution $\tPi_n := \Pi_n * \e_{(\lfloor na_1 \rfloor,\lfloor na_2 \rfloor)}$.
}

\adbr{
By Remark~\ref{Almost-MPP} and Theorem~\ref{ADB-first-approx-thm}, $\dtv(\tPi\nud,\hPi\nud) = O(n^{-1/2}\log n)$,
where~$\hX_n$ is the elementary process from Theorem~\ref{ADB-MPP-A-too}, having
\[
   c \Def a + \hc,\quad A \Def \left(\begin{matrix}
                         - \m_1 & 0 \\ 0 & - \m_2
                    \end{matrix} \right) \quad \mbox{and} 
     \quad \s^2 \Def \left(\begin{matrix}
                         2(\a_1 + \a_{12}) & \a_{12} \\ \a_{12} &  2(\a_2 + \a_{12})
                    \end{matrix} \right).
\]
Using Theorem~\ref{ADB-first-approx-thm} again, it follows that any other \MPP~$X'_n$
satisfying Assumptions G0--G4 with the same $c$, $A$ and~$\s^2$, restricted to any $n\d$-ball
around~$nc$, has an equilibrium distribution at distance of order $O(n^{-1/2}\log n)$ from~$\tPi\nud$,
and hence also from~$\tPi_n$.  This covers a wide range of processes, but by no means all.
For instance, it is easy to check that $0 \le \corr(X_n^1,X_n^2) \le 1/2$ for all positive
choices of the birth and death rates.
}

\section{Technicalities}\label{appendix}
\subsection{Proof of Lemma~\ref{ADB-Newton-truncation}}\label{L4.2}
In order to bound $|\ex\{(f(W+X+U) - f(W+X))I[\nS{W-nc} \le n\d/3]\}|$, we
write
\eqs
    \lefteqn{ (f(W+X+U) - f(W+X))I[\nS{W-nc} \le n\d/3]}\\
  &=& \!\!\tff_X(W + U) - \tff_X(W)\\
    &&\mbox{} \!\!\!\!+ f(W+X+U)\{I[\nS{W-nc} \le n\d/3] - I[\nS{W+U-nc} \le n\d/3]\},
\ens
where $\tff_X(Y) := f(Y+X)I[\nS{Y-nc} \le n\d/3]$.  Since~$|\tff_X(Y)| \le \|f\|^\S_{n\d/2,\infty}$ 
if $\nS{X} \le n\d/6$, it is immediate that
\[
   |\ex\{\tff_X(W + U) - \tff_X(W)\}| \Le \e_1 \nl{U}\|f\|^\S_{n\d/2,\infty}.
\]
Then, on the set
$$
  \{\nS{Y-nc} \le n\d/3 < \nS{Y+U-nc}\}\cup\{\nS{Y+U-nc} \le n\d/3 < \nS{Y-nc}\},
$$ 
it follows that 
$\nS{Y-nc} > n\d/3 - \nS{U} \ge n\d/4$ if $n\d \ge 12\nS{U}$, and that 
$$
    \nS{Y+X+U-nc} \Le n\d/3 + \max\{\nS{X},\nS{X+U}\} \Le n\d/2,
$$
this last by assumption.  Hence
\eqs
    \lefteqn{ |\ex\{f(W+X+U)(I[\nS{W-nc} \le n\d/3] - I[\nS{W+U-nc} \le n\d/3])\}| }\\
    &&\Le \e_2 \|f\|^\S_{n\d/2,\infty}, \phantom{XXXXXXXXXXXXXXXXXXXXXXXXXX}
\ens
and Lemma~\ref{ADB-Newton-truncation} is proved.  \hfill$\qed$

\subsection{Proof of Lemma~\ref{ADB-Newton}}\label{L4.3}
We prove only part~(i), showing that, if~$E_2(W,J,h)$ is as in~\Ref{Sept-E2-def}, then
\eqs
  \leqn{\Bigl|\ex\Bigl\{ E_2(W,J,h) I[\nS{W-nc} \le n\d/3] \Bigr\}  \Bigr|} \\
  &&\Le \|\D^2 h\|^\S_{n\d/2,\infty} (C^{(1)}_{\ref{ADB-Newton}}(J)\e_1 + C^{(2)}_{\ref{ADB-Newton}}(J)\e_2),
\ens
for constants $C^{(1)}_{\ref{ADB-Newton}}(J), C^{(2)}_{\ref{ADB-Newton}}(J)$ given in~\Ref{Newton-C-defs}, whenever
\[
   \dtv(\law(W),\law(W+\ej)) \Le \e_1, \ 1\le j\le d;\quad \pr[\nS{W-nc} \ge n\d/4] \Le \e_2; 
\] 
the proof of part~(ii) is entirely similar.
We begin by taking any function~$f\colon\Z^d\to\re$ and any $k\in\Z$ and $1\le j\le d$.
First, we note that, for $X \in \Z^d$, 
\eq\label{ADB-f-first-diff}
    f(X+k\ej) - f(X) \Eq \Blb  \begin{array}{ll}
           \sum_{l=1}^{k} \D_j f(X + (l-1)\ej) &\quad \mbox{if}\ k \ge 1; \\
           -\sum_{l=1}^{|k|} \D_j f(X - l\ej) &\quad \mbox{if}\ k \le -1.
                         \end{array}  \right.
\en
Hence, by considering positive and negative~$k$ separately, we find that 
\[
   |f(X+k\ej) - f(X) - k \D_jf(X)| \Le \half |k|(|k|+1) \|\D^2f\|_\infty.
\] 
For more general increments $J \in \Z^d$, we define 
\[
   J\us \Def (J_1,J_2,\ldots,J_s,0,0,\ldots,0),\quad s\ge1;\qquad J^{(0)} \Def (0,\ldots,0).
\]
Then, from the inequalities above, we have
\[
   |f(X+J\us) - f(X+J\usi) - J_s \D_s f(X+J\usi)| \Le \half|J_s|(|J_s|+1) \|\D^2f\|_\infty,
\]
and
\[
   |\D_s f(X+J\usi) - \D_s f(X)| \Le \nl{J\usi} \|\D^2f\|_\infty.
\]
Hence it follows that
\eqs
   \lefteqn{|f(X+J\us) - f(X+J\usi) - J_s \D_s f(X)|}\\
   &&\quad \Le \Blb \half|J_s|(|J_s|+1) + |J_s|\nl{J\usi}\Brb \|\D^2f\|_\infty. \phantom{XXXXX}
\ens
Adding over $1\le s\le d$, this gives
$$
   |f(X+J) - f(X) - Df(X)^TJ| \Le \half \nl{J}(\nl{J}+1) \|\D^2f\|_\infty.
$$
The same argument also shows that
\eq\label{ADB-first-difference}
   |f(X+J) - f(X) - Df(X)^TJ| \Le \half \nl{J}(\nl{J}+1) \|\D^2f\|^\S_{n\d/2,\infty},
\en
if $\nS{X - nc} \le n\d/3$ and $\nS{J} \le n\d/6$.

\ignore{
Note that then, for any function~$f$, and for $s\ge1$,
\eq\label{ADB-first-difference}
  f(X+J\us) - f(X+J^{(s-1)}) - \D_{s} f(X+J^{(s-1)})^T J_s 
     \Eq  \sum_{l=0}^{J_s-1}\{\D_s f(X + J^{(s-1)} + l\ej) - \D_j f(X+J^{(s-1)})\}, 
\en
if $J_s \ge 1$, whereas, if $J_s \le -1$,
\eq\label{ADB-first-difference-neg}
  f(X+J\us) - f(X+J^{(s-1)}) - \D_{s} f(X+J^{(s-1)})^T J_s \Eq  \sum_{l=1}^{|J_s|}
     \{\D_j f(X+J^{(s-1)}) - \D_s f(X + J^{(s-1)} - l\ej)\};
\en 
for $J_s = 0$, the left hand side of~\Ref{ADB-first-difference} is zero.
Taking expectations, this gives, in any of the three cases,
\[
   |\ex\{f(X+J\us) - f(X+J^{(s-1)}) - \D_{s} f(X+J^{(s-1)})^T J_s\}| 
          \Le \half |J_s|^2 \|\D^2 f\|_\infty\e.
\]
\eq\label{ADB-first-difference-ex}
   |\ex\{f(X+J) - f(X) - \D f(X)^T J\}| \Le \half \nl{J}^2 \|\D^2 f\|_\infty\e.
\en
}

We now prove Part~(i) of the lemma by induction on the number~$r$ of non-zero components of~$J$.
We write $I\nuh(X)$ as shorthand for $I[\nS{X - nc} \le n\h/3]$, for any $\h > 0$.
Starting with $r=1$, we consider three cases.
For $J = k\ej$ and $k \ge 1$, we have
\eqa
   \lefteqn{  h(X+k\ej) - h(X) - k\D_j h(X) - \half k(k-1)\D_{jj} h(X) }\non\\
    &&\Eq \sum_{l=1}^{k-1} \{\D_j h(X+l\ej) - \D_j h(X)\} - \half k(k-1)\D_{jj} h(X) \non\\
    &&\Eq \sum_{l=1}^{k-1} \sum_{r=1}^{l-1}\{\D_{jj} h(X+r\ej) - \D_{jj} h(X) \}.
          \label{ADB-1D}
\ena
From Lemma~\ref{ADB-Newton-truncation}, with $X=0$ and $U = r\ej$, it follows that
\eq\label{ADB-1D-result}
   |\ex\{(\D_{jj} h(W+r\ej) - \D_{jj} h(W))\,I\nud(W) \}| \le (r\e_1 + \e_2)\|\D^2 h\|^\S_{n\d/2,\infty}
\en
for $r \le k-2$,
if $\nS{k\ej} = |J|_1\nS{\ej} \le n\d/12$. 
Multiplying~\Ref{ADB-1D} by $I\nud(X)$, replacing $X$ by~$W$, then
taking expectations, invoking \Ref{ADB-1D-result}, and adding,
this yields the claim for $J = k\ej$ and $k\ge1$, with the upper bounds 
$C^{(1)}_{\ref{ADB-Newton}}(k\ej) =\sixth (k-2)(k-1)k$ and $C^{(2)}_{\ref{ADB-Newton}}(k\ej) = \half (k-1)k$.
If $J = k\ej$ and $k=0$, there is nothing to prove.  For $J = -k\ej$ and $k\ge1$,
we have
\eqa
   \lefteqn{  h(X-k\ej) - h(X) - (-k)\D_j h(X) - \half (-k)(-k-1)\D_{jj} h(X) }\non\\
    &&\Eq \sum_{l=1}^{k} \{\D_j h(X) - \D_j h(X-l\ej)\} - \half k(k+1)\D_{jj} h(X) \non\\
    &&\Eq \sum_{l=1}^{k} \sum_{r=1}^{l}\{\D_{jj} h(X-r\ej) - \D_{jj} h(X) \}.
          \label{ADB-1D-neg}
\ena
Arguing as before yields the claim for $J = -k\ej$ and $k\ge1$,
with 
$$
  C^{(1)}_{\ref{ADB-Newton}}(-k\ej) \Eq \sixth k(k+1)(k+2);\qquad C^{(2)}_{\ref{ADB-Newton}}(-k\ej) \Eq \half k(k+1),
$$
again if $\nS{k\ej} = \nl{J}\nS{\ej} \le n\d/12$.  
This establishes that the inequality~(i) is true for $r=1$, when~$J$ has just one non-zero component.

Now, for any $2\le r\le d$, we
assume that~(i) is true for all~$J$ with at most $r-1$ non-zero components,
and show that this implies that~(i) is also true for all~$J$ with at most~$r$ non-zero components.
Without loss of generality, we consider any~$J$ with~$J_j=0$ for $r < j \le d$.  
First, we write
\[
   h(X+J) - h(X) \Eq \{h(X+J) - h(X+J^{(r-1)})\} + \{h(X+J^{(r-1)}) - h(X)\}.
\]
The induction hypothesis gives
\eqs
   \lefteqn{\Bigl|\ex\Bigl\{ \Bigl(e_2(W,J^{(r-1)},h)  + \half \sum_{j=1}^{r-1}  J_j \D_{jj}h(W)\Bigr)\, 
              I\nud(W) \Bigr\}  \Bigr|} \\
       && \Le  \|\D^2 h\|^\S_{n\d/2,\infty} (C^{(1)}_{\ref{ADB-Newton}}(J^{(r-1)})\e_1+C^{(2)}_{\ref{ADB-Newton}}(J^{(r-1)})\e_2).\phantom{XX}
\ens
Thus it remains only to consider the expectation of the quantity
\eqs
    \leqn{\Bigl(h(W+J) - h(W+J^{(r-1)}) - J_r \D_r h(W)} \\
    &&\mbox{} - \half J_r^2 \D_{rr} h(W)
      - J_r \sum_{j=1}^{r-1} J_j\D_{rj} h(W) + \half J_r \D_{rr} h(W)\Bigr)\,I\nud(W).\phantom{XX}
\ens
The one dimensional result gives 
\eqs
   \lefteqn{\bigl|\ex\bigl\{\bigl( h(W+J) - h(W+J^{(r-1)}) - J_r \D_r h(W+J^{(r-1)})} \\
   &&\mbox{}\qquad \qquad\qquad  
          - \half J_r(J_r-1) \D_{rr} h(W+J^{(r-1)})\bigr) \,I\nud(W)\bigr\}\bigr| \\
   && \Le \sixth |J_r|(|J_r| + 1) \|\D^2 h\|^\S_{n\d/2,\infty}((|J_r|+2)\e_1+3\e_2),\phantom{XXXXX}
\ens
leaving an expectation involving the expression
\eqa
   \lefteqn{ J_r\Blb \D_r h(X+J^{(r-1)}) - \D_r h(X) - \sum_{j=1}^{r-1} J_j \D_{rj} h(X)\Brb }\non\\
    &&\mbox{}\qquad + \half J_r(J_r-1) \{\D_{rr} h(X+J^{(r-1)}) - \D_{rr} h(X)\}.\phantom{XXX}
     \label{ADB-final-term}
\ena
The first line in~\Ref{ADB-final-term} can be expressed as
\eq\label{ADB-term-1-sum}
   J_r \sum_{s=1}^{r-1} \{\D_r h(X + J\us) - \D_r(X + J\usi) - J_s \D_{rs} h(X)\}.
\en 
From~\Ref{ADB-f-first-diff} with $f := \D_d h$, we have
\eqs
  \lefteqn{ \D_r h(X + J\us) - \D_r h(X + J\usi) - J_s \D_{rs} h(X) }\\[1ex]
  &&=\ \Blb  \begin{array}{ll}
          \sum_{l=1}^{J_s} \{\D_{rs} h(X+J\usi + (l-1)e^{(s)}) - \D_{rs} h(X)\},
                           &\quad {\rm if}\ J_s \ge 1;\\[1.5ex]
          0,                &\quad {\rm if}\ J_s = 0;\\[1ex]
          - \sum_{l=1}^{|J_s|} \{\D_{rs} h(X+J\usi - le^{(s)}) - \D_{rs} h(X)\},
          &\quad {\rm if}\ J_s \le -1.
              \end{array} \right.
\ens
Hence, multiplying by $I\nud(X)$, taking expectations with~$W$ in place of~$X$ and using 
Lemma~\ref{ADB-Newton-truncation}, it follows for the 
first line in~\Ref{ADB-final-term} that 
\eqs
   \lefteqn{|J_r|\left|\ex\Blb \sum_{s=1}^{r-1} \{\D_r h(W + J\us) - \D_r(W + J\usi) 
         - J_s \D_{rs} h(W)\}\,I\nud(W) \Brb \right| }\\
    &\le& |J_r| \sum_{s=1}^{r-1} |J_s| \|\D^2 h\|^\S_{n\d/2,\infty}\{(\nl{J\usi} + \half|J_s|)\e_1+\e_2\}
           \phantom{XXXXXXXX}\\
    &\le&   |J_r|\nl{J^{(r-1)}} \|\D^2 h\|^\S_{n\d/2,\infty}(\half\nl{J^{(r-1)}}\e_1+\e_2).
\ens
The second line in~\Ref{ADB-final-term} is directly bounded using 
Lemma~\ref{ADB-Newton-truncation}, giving
\eqs
   \lefteqn{\half |J_r|(|J_r|+1)\bigl|\ex\{(\D_{rr} h(W+J^{(r-1)}) - \D_{rr} h(W))\,I\nud(W)\}\bigr| }\\
    &&\Le \half |J_r|(|J_r|+1) \|\D^2 h\|^\S_{n\d/2,\infty}(\nl{J^{(r-1)}}\e_1+\e_2) .\phantom{XX}
\ens
This establishes the inequality~(i) for~$J$ with $J_j=0$ for $r < j \le d$, 
since it is easily checked that
\[
  C^{(1)}_{\ref{ADB-Newton}}(J) \ \ge\ C^{(1)}_{\ref{ADB-Newton}}(J^{(r-1)}) + \half |J_r|\,\nl{J^{(r-1)}}^2
           + \half |J_r| (|J_r|+1)\,\nl{J^{(r-1)}},
\]
and that
\[
  C^{(2)}_{\ref{ADB-Newton}}(J) \ \ge\ C^{(2)}_{\ref{ADB-Newton}}(J^{(r-1)}) +  |J_r|\,\nl{J^{(r-1)}}
           + \half |J_r| (|J_r|+1).
\]
The lemma now follows by induction. \hfill$\qed$

\subsection{Proofs of Lemma~\ref{ADB-equilibrium-variance-bound} and Proposition~\ref{ADB-lema2}}\label{MPP-appx}
To prove Lemma~\ref{ADB-equilibrium-variance-bound}, we need to show that, 
if $X\nud \sim \Pi\nud$ for some $\d \le \d_0/\sqrt{\lmax(\S)}$, then $\ex\{\nS{X\nud - nc}^2\} = O(n)$.
Now, by Dynkin's formula, we have $\ex\{\AA\nud h(X\nud)\} = 0$ for any choice of~$h$.
Take $h(X) = h_0(X) = \nS{X-nc}^2$ as in Lemma~\ref{ML-drift-lemma-1},
for which $\AA\nud h_0(X) \le -\lla_1 h_0(X)$ in 
$\nS{X-nc} \ge K_{\ref{ML-drift-lemma-1}}\sqrt{nd}$.  Then, from~\Ref{ML-first-drift-bound}, for all 
$\nS{X-nc} \le n\,\min\{\d_{\ref{ML-drift-lemma-1}},\d'_{\ref{ML-drift-lemma-1}}(d)\}$,
$\AA\nud h_0(X) \le n dK_1'$, where, from \Ref{ADB-trace-inequality}, 
and~\Ref{ADB-finally}, 
$$
    K_1' \Def L_0 d^{-1}\tr(\s^2_\S) \le L_0  \lmax(\s^2_\S) \Le L_0 \adbr{\lla_1}\Rh(\s^2)\Rh(\S).
$$
This implies that
\[
    0 \Eq \ex\{\AA\nud h_0(X\nud)\} \Le -\lla_1\ex\{\nS{X\nud - nc}^2 I[\nS{X-nc} \ge K_{\ref{ML-drift-lemma-1}}\sqrt{nd}]\}
          + ndK_1'.
\]
Since also, using~\Ref{ADB-K1-def}, 
\[
   \ex\{\nS{X\nud - nc}^2 I[\nS{X-nc} < K_{\ref{ML-drift-lemma-1}}\sqrt{nd}]\} 
                   \Le ndK_{\ref{ML-drift-lemma-1}}^2  \Le 4ndL_0 \Rh(\s^2)\Rh(\S),
\]
the claim is proved.   $\hfill\qed$

\medskip
To prove Proposition~\ref{ADB-lema2}, we start with a concentration bound,
used to handle the truncation.

\begin{lemma}\label{ADB-lemma1}
Define $K_\S := 2\Lbar K_{\ref{ADB-lema3}} / (d\th_1\lla_1) \in \KK$.
Under Assumptions G0--G4, for any 
$0 < \d \le \min\{\d_{\ref{ML-drift-lemma-1}},\d'_{\ref{ML-drift-lemma-1}}(d)\}$, 
$n \ge n_{\ref{ML-drift-lemma-1}}$ and $\h > K_{\ref{ML-drift-lemma-1}}\sqrt{d/n}$, 
$$
   \Pi\nud\{\nS{X-nc}> n\h \} \ \leq\   \h^{-2}d^2K_\S e^{-n\th_1\h^2}.
$$ 
In particular, for any fixed~$\h > 0$, $\Pi\nud\{\nS{X-nc}> n\h \} = O(n^{-r})$ as $n\to\infty$,
for any $r \ge 1$.
\end{lemma}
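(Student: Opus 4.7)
The plan is to apply Dynkin's formula to the exponential Lyapunov function $h_{\th_1}(X) = \exp\{n^{-1}\th_1 \nS{X-nc}^2\}$ already used in Lemma~\ref{ML-drift-lemma-1}, and then read off the tail probability by a Chebyshev-style estimate. Since $X\nud \sim \Pi\nud$ is stationary on the finite state space $\tB_{n,\d}(c)$, Dynkin's formula gives $\ex_\Pi\{\AA\nud h_{\th_1}(X\nud)\} = 0$.

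I would split the expectation according to whether $\nS{X-nc}$ lies below or above the inner radius $K_{\ref{ML-drift-lemma-1}}\sqrt{nd}$. On the outer region, the second drift inequality of Lemma~\ref{ML-drift-lemma-1} gives
\[
   \AA\nud h_{\th_1}(X) \Le -\half n^{-1}\lla_1 \th_1 h_0(X) h_{\th_1}(X).
\]
On the inner region, I would use the crude bound $\AA\nud h_{\th_1}(X) \le H_{\th_1} \le n\L K_{\ref{ADB-lema3}}$ from \Ref{ADB-H-th-bound}; this bound is valid for $n \ge n_{\ref{ML-drift-lemma-1}}$ (the hypothesis ensures $\th_1 \le \th_1$ and the arguments of Lemma~\ref{ML-drift-lemma-1} apply). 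Setting the two contributions against each other via $\ex_\Pi\{\AA\nud h_{\th_1}\} = 0$ yields
\[
   \half n^{-1}\lla_1\th_1\,\ex_\Pi\bigl\{h_0(X\nud)h_{\th_1}(X\nud)\,\bone\{\nS{X\nud-nc}\ge K_{\ref{ML-drift-lemma-1}}\sqrt{nd}\}\bigr\}
      \Le nK_{\ref{ADB-lema3}}\L,
\]
so the expectation on the left is at most $2n^2 K_{\ref{ADB-lema3}}\L/(\lla_1 \th_1)$.

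Now, because $\h > K_{\ref{ML-drift-lemma-1}}\sqrt{d/n}$, the set $\{\nS{X-nc} > n\h\}$ lies inside the outer region, so that there
$h_0(X)h_{\th_1}(X) \ge n^2\h^2 \exp\{n\th_1\h^2\}$. A Chebyshev-type inequality therefore gives
\[
   \Pi\nud\{\nS{X-nc}>n\h\} \Le \frac{2K_{\ref{ADB-lema3}}\L}{\h^2\lla_1\th_1}\,e^{-n\th_1\h^2}.
\]
Finally, using $\L = d\Lbar$ one sees that $2K_{\ref{ADB-lema3}}\L/(\lla_1\th_1) = d^2\cdot 2\Lbar K_{\ref{ADB-lema3}}/(d\th_1\lla_1) = d^2 K_\S$, which yields the stated bound. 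The "in particular" claim is then immediate, since for fixed $\h > 0$ the factor $e^{-n\th_1\h^2}$ decays faster than any polynomial in~$n$.

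There is no genuine obstacle here: everything reduces to bookkeeping once the correct Lyapunov function is chosen. The one point requiring mild care is checking that the inner-region bound on $\AA\nud h_{\th_1}$ from \Ref{ADB-H-th-bound} is available (which it is, given $n \ge n_{\ref{ML-drift-lemma-1}}$), and that the final constants line up with the definition $K_\S := 2\Lbar K_{\ref{ADB-lema3}}/(d\th_1\lla_1)$; the factor of $d$ in $\L = d\Lbar$ is what produces the $d^2$ in the statement.
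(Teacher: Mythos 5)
Your proposal is correct and follows essentially the same route as the paper: Dynkin's formula applied in stationarity to $h_{\th_1}$, the second drift inequality of Lemma~\ref{ML-drift-lemma-1} on the outer region combined with the bound~\Ref{ADB-H-th-bound} on the inner region, and then a Chebyshev-type step on $\{\nS{X-nc}>n\h\}$. The constant bookkeeping ($\L = d\Lbar$ giving $d^2K_\S$) also matches the paper's computation.
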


\begin{proof}
Again, for $\d \le \d_0/\sqrt{\lmax(\S)}$ and $X\nud \sim \Pi\nud$, we have $\ex\{\AA\nud h(X\nud)\} = 0$ 
for any choice of~$h$,
by Dynkin's formula. Take $h(X) = h_{\th_1}(X)$  as in Lemma~\ref{ML-drift-lemma-1},
for which, from~\Ref{ML-second-MG} and for $n \ge n_{\ref{ML-drift-lemma-1}}$, 
$$
   \AA\nud h_{\th_1}(X) \Le -\half n^{-1} \lla_1 \th_1 h_0(X)h_{\th_1}(X) \quad \mbox{for}\quad 
          \nS{X-nc} \ge K_{\ref{ML-drift-lemma-1}}\sqrt{nd}.
$$
This, with~\Ref{ADB-H-th-bound}, implies that 
\[
   \half n^{-1}\lla_1 \th_1 
   \ex\bigl\{h_0(X)h_{\th_1}(X) I\bigl[\nS{X-nc} \ge K_{\ref{ML-drift-lemma-1}}\sqrt{nd}\bigr]\bigr\}
       \Le n\L K_{\ref{ADB-lema3}}.
\]
Hence, for $\h > K_{\ref{ML-drift-lemma-1}}\sqrt{d/n}$, it follows that
\[
    \half n^{-1}\lla_1 \th_1\, (n\h)^2 e^{n^{-1}\th_1(n\h)^2} \Pi\nud\{\nS{X-nc}> n\h \} \Le n\L K_{\ref{ADB-lema3}},
\]
proving the first part.  The second is then immediate.
\end{proof}

The proof of the next lemma is rather close to that of Theorem~\ref{ADB-initial-displacement}, so
we only give a quick sketch.

\begin{lemma}\label{ADB-thdtvpi} 
Under Assumptions G0--G3, for any 
fixed $\d \le \min\{\d_{\ref{ML-drift-lemma-1}},\d'_{\ref{ML-drift-lemma-1}}(d)\}$, and 
for any $J\in\JJ$,
$$
   d_{TV}\{\Pi\nud,\Pi\nud*\e_J\} \Eq O(n^{-1/2})
$$ 
as $n\to\infty$,
where $\e_J$ denotes the point mass at~$J$ and~$*$ denotes convolution.
\end{lemma}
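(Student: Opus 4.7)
The plan is to mimic the proof of Theorem~\ref{ADB-initial-displacement}, adapted to exploit the stationarity of $\Pi\nud$. I would realize $W\sim\Pi\nud$ as $W = X\nud(0)$ with $X\nud$ started in equilibrium; then $X\nud(U)\sim\Pi\nud$ for every $U\ge 0$, so that, for any set~$A$,
$$\Pi\nud(A) - (\Pi\nud*\e_J)(A) \Eq \pr[X\nud(U)\in A] - \pr[X\nud(U)\in A-J].$$
It then suffices to bound the supremum over $A$ of the absolute value of the right hand side.

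Following Theorem~\ref{ADB-initial-displacement}, I would decompose $X\nud$ on $[0,U]$ as $X\nud(u) = X\nud(0) + J N\nud(u) + W\nud(u)$, with $N\nud$ counting the $+J$-jumps and $W\nud$ accumulating the others, up to the hitting time $\hht\nud$ of the boundary of a suitable ball about $nc$. Setting $q^U_{l,X}(w) := \pr_X[W\nud(U)=w\giv N\nud(U)=l]$, stationarity gives
$$\pr[X\nud(U)=x] \Eq \sum_{X}\Pi\nud(\{X\})\sum_{l\ge0}\pr_X[N\nud(U)=l]\,q^U_{l,X}(x-X-Jl),$$
and an exactly parallel formula for $\pr[X\nud(U)=x-J]$. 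Shifting the summation index $l\to l-1$ in the second aligns them, and the difference then splits, as in~\Ref{ADB-Poisson-part-2}, into a Poisson piece bounded by $\{ng^J(c)U\}^{-1/2}$ via~\Ref{ADB-BHJ-S}, and a Radon--Nikodym piece of order $n^{-1/2}\sqrt{GU}$, for a suitable constant~$G$, by exactly the martingale argument leading to~\Ref{ADB-single-rho-result-S} (with $\ej$ replaced by $-J$). Choosing $U = 1/\sqrt{g^J(c)G}$ --- a positive constant independent of~$n$, since the problem data are held fixed --- balances both pieces at $O(n^{-1/2})$.

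The truncation errors --- from restricting to paths with $\hht\nud > U$, and from discarding initial points $X$ with $\nS{X-nc}>n\d/2$ --- are of orders $O(n^{-4})$ and $O(n^{-2})$ respectively, by Lemma~\ref{ADB-lema3} and Lemma~\ref{ADB-lemma1} with $r=2$, and are thus negligible. The main obstacle will be the non-constancy of the $+J$-jump rate under the general Assumptions G0--G3 (Theorem~\ref{ADB-initial-displacement} exploits the elementary Assumption~S4 to make the corresponding $-\ej$-jump rate constant), which prevents $N\nud$ from being an exact Poisson process. This can be handled by coupling $N\nud$ to a Poisson process of rate $ng^J(c)$ over intervals on which $X\nud$ stays near $nc$, and absorbing the resulting $O(\sqrt{d/n}\,L_1)$ rate-variation per jump into the martingale bounds, which already tolerate such a perturbation.
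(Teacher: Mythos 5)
Your architecture is the right one, and it is essentially the paper's: use stationarity to reduce to bounding, for initial points $X$ with $\nS{X-nc}\le n\d/2$, the distance between $\law_X(X\nud(U))$ and $\law_X(X\nud(U)+J)$ for a fixed constant $U$; split off a counting process $N\nud$ of $J$-jumps; bound a Poisson-smoothing term via \Ref{ADB-BHJ-S} and a Radon--Nikodym martingale term as in Theorem~\ref{ADB-initial-displacement}; and dispose of the truncation events by Lemma~\ref{ADB-lema3} and a concentration bound. The gap is precisely at the point you flag as the main obstacle, and your proposed remedy does not work quantitatively. In equilibrium $|n^{-1}X\nud(u)-c|$ is typically of order $\sqrt{d/n}$, so the actual $J$-jump rate $ng^J(n^{-1}X\nud(u))$ differs from $ng^J(c)$ by an amount of order $\sqrt{n}$; over a time interval of fixed length $U$, any coupling of $N\nud$ with a Poisson process of rate $ng^J(c)$ must therefore disagree at an expected number of points of order $\sqrt{n}$, so the coupling fails with probability tending to one and cannot deliver the $\{ng^J(c)U\}^{-1/2}$ smoothing bound. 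Nor can the discrepancy be pushed through the likelihood-ratio martingale: the per-jump perturbations would be of relative size $O(L_1\sqrt{d/n})$ rather than the $O(L_1/n)$ of \Ref{ADB-ratio-bnd-S}, so over the $O(n)$ jumps in $[0,U]$ the expected quadratic variation is $O(d)$, i.e.\ $O(1)$, not $O(n^{-1})$; the martingale bounds do not ``already tolerate'' a perturbation of that size.

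The missing idea is a thinning, and it is exactly what Assumption G3 is designed for: since $g^J(x)\ge\m_0^J:=\e_0 g^J(c)>0$ uniformly on the ball, one realizes the $J$-jumps as the superposition of a genuine Poisson process $N\nud$ of \emph{constant} rate $n\m_0^J$ and of excess $J$-jumps at the nonnegative rate $n\{g^J(n^{-1}X)-\m_0^J\}$, the latter being absorbed into $W\nud$ along with all the other jumps (note that the constant rate must be a uniform \emph{lower} bound -- $g^J(c)$ itself need not minorize $g^J$ on the ball). With this construction, the first term of the analogue of \Ref{ADB-Poisson-part-2} is bounded by $\pr_{X_0}[\hht\nud\le U]+\{n\m_0^J U\}^{-1/2}$ exactly as before, and the Radon--Nikodym martingale for the $W$-part, which now compares paths shifted by a single $J$, has jumps of size $O(|J|L_1/(n\e_0))$ and hence quadratic variation $O(n^{-1}\L U)$ as in \Ref{ADB-QV-S}; choosing $U=1/\sqrt{\L\m_0^J}$ then gives the $O(n^{-1/2})$ bound. (Your truncation over initial states via Lemma~\ref{ADB-lemma1} is fine; the paper uses Lemma~\ref{ADB-equilibrium-variance-bound} and Chebyshev instead, which is immaterial.)
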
 

\begin{remark}\label{Dec-displacements-rk}
{\rm
 Note that we cannot directly replace $J$ by~$\ej$ here, to obtain Proposition~\ref{ADB-lema2}, because~$\ej$ may not
belong to~$\JJ$.  Under Assumption~G4, we can do so: see~\adbr{\Ref{ADB-Sept-mixing}} below.
}
\end{remark}

\begin{proof} 
Fix any~$U>0$, and use the 
stationarity of~$\Pi\nud$ to give the inequality
\begin{eqnarray}
 d_{TV}\{\Pi\nud,\Pi\nud*\e_J\}
 &\le& \!\!\sum_{X \in \Z^d} \Pi\nud(X)\, d_{TV}\{\law_X(X\nud(U)),\law_X(X\nud(U)+J)\},\phantom{XXx} \label{ADB-disttv}
\end{eqnarray}
where $\law_X$ denotes distribution conditional on $\{X\nud(0)=X\}$.
By Lemma~\ref{ADB-equilibrium-variance-bound},  it then follows that, for any 
$\d \le \min\{\d_{\ref{ML-drift-lemma-1}},\d'_{\ref{ML-drift-lemma-1}}(d)\}$, 
\eq\label{ADB-bound-1}
  d_{TV}\{\Pi\nud,\Pi\nud*\e_J\} 
       \Eq D_{Jn}(\d/2) + O(n^{-1}),
\en
where
\eqs
D_{Jn}(\d') \ :=\ 
  \sum_{X\colon\nS{X-nc} \le n\d'}\Pi\nud(X)\, d_{TV}\{\law_X(X\nud(U)), 
     \law_X(X\nud(U)+J)\}.
\ens
This alters our problem to one of finding a bound of similar form, but now
involving the transition probabilities of the
process~$X\nud$ over a finite time~$U$, started in any state~$X$ which is reasonably 
close to~$nc$.  
\ignore{
\adbg{Our argument is carried through for} \adbb{$n\ \ge\ \max\{n_{\ref{ADB-thdtvpi}}(1/\m_0^J),\smh^{-1}(\d)\}$, where
\eq\label{ADB-n-limited}
   n_{\ref{ADB-thdtvpi}}(T) \Def \max\Blb (5(d^{-1}\JmaxS)\max\{1,\sqrt{d\th_1}\})^{8/3}, 
     n_{\ref{ADB-lema3}}(T)\Brb \ \in\ \KK(\Lbar T),
\en 
to be compared with~$n_{\Ref{Oct-n2.7-def}}$.} 
}

By Assumption~G3, $J$-jumps occur in~$X\nud$ with rate at least~$n\m_0^J$,
whenever it is in the set~$\XX\nud(J)$.
Thus, by analogy with~\Ref{ADB-bivariate-process-2}, we can realize the chain~$X\nud$ with $X\nud(0) = X_0$ 
in the form $X\nud(u) := X_0 + JN\nud(u) + W\nud(u)$, where 
the transition $(l,W) \rightarrow (l+1,W)$ occurs at rate $n\m_0^J$.
This leads to a decomposition
\eqa
    \leqn{d_{TV}\{\law_X(X\nud(U)), \law_X(X\nud(U)+J)\} } \non\\
   &\le&  \frac{1}{2}\sum_{l \geq 0}
       |\pr_{X_0}[N\nud(U)=l]-\pr_{X_0}[N\nud(U)=l-1]| \label{ADB-Poisson-part-2a}\\
  && \ \mbox{}+\frac{1}{2}\sum_{X \in \Z^d}\sum_{l \geq 1}\pr_{X_0}[N\nud(U)=l-1]
      |q^{U}_{l,X_0}(X-lJ)-q^U_{l-1,X_0}(X-lJ)|, \non
\ena
where $q^{U}_{l,X}(W)$ is as defined in~\Ref{ADB-dens1-S}.

Much as for~\Ref{ADB-first-part-bnd-S}, and using Lemma~\ref{ADB-lema3}, 
the first sum in~\Ref{ADB-Poisson-part-2a} is bounded by 
\eq\label{Dec-first-contn}
   \pr_{X_0}[\hht\nud \le U] + \{n\m_0^J U\}^{-1/2} \Eq O(n^{-1/2}), 
\en
if we choose $U = 1/\sqrt{\L\m_0^J}$, where~$\hht\nud$ is as defined in~\Ref{tau-hat-def-S}.  
For the second part of~\Ref{ADB-Poisson-part-2a}, we argue as
in the proof of Theorem~\ref{ADB-initial-displacement}, using the Radon--Nikodym
derivative $\rstar(u,w^u) := d\bPstar/d\bP^U_{\bs_{l-1},X}(w^u)$.  As long as
$\rstar(u,w^u) \le 2$ and $u \le \hht_\d$, the $\bP^U_{\bs_{l-1},X}$-martingale $\rstar(u,w^u)$ makes jumps of size
at most $2|J|L_1/(n\e_0)$, and this enables the quadratic variation of
the stopped martingale to be 
controlled by \adbr{$n^{-1}|J|^2 4L_0(L_1/\e_0)^2\L u$}, as in \Ref{ADB-QV-S}.  
Choosing $U = 1/\sqrt{\L\m_0^J}$ once more, and arguing as for \Ref{ADB-phi-bound-S} and~\Ref{ADB-single-rho-result-S},
the second part in~\Ref{ADB-Poisson-part-2a} is also shown to be of order $O(n^{-1/2})$. 
Combining these observations with~\Ref{ADB-bound-1},
the lemma follows.
\end{proof}

To deduce Proposition~\ref{ADB-lema2} from Lemma~\ref{ADB-thdtvpi}, take any $1\le j\le d$. 
Then it is immediate from the triangle inequality that, because
$\sum_{l=1}^{r(j)} J_l\uj = \ej$ for $J_1\uj,\ldots,J_{r(j)}\uj$ as given
in Assumption~G4, we also have
\eq\label{ADB-Sept-mixing}
   \dtv(\Pi\nud,\Pi\nud*\e_{\ej}) \Le \sum_{l=1}^{r(j)} \dtv\bigl(\Pi\nud,\Pi\nud*\e_{J_l\uj}\bigr)
               \Eq O(n^{-1/2}).  
\en

\adbr{
\begin{remark}\label{ADB-J-jumps-in-Th3.1}
{\rm
Replacing~$\ej$ by any $J\in\JJ$ in the statement of
Theorem~\ref{ADB-initial-displacement}, the corresponding conclusion can be established, by adapting the proof 
much as for Lemma~\ref{ADB-thdtvpi} above,
for sequences of Markov jump processes satisfying Assumptions G0--G4.
In the bounds, $K_{\ref{ADB-initial-displacement}}^J$ depends on~$J$ through a factor of~$|J|$, 
and $G\uj$ is replaced by~$\L$.
}
\ignore{
This result is used, for instance, in proving cut-off in the convergence to equilibrium of
Markov jump processes in Barbour, Brightwell \& Luczak~(2016).
}
\end{remark}
}

\end{document}